\DeclareMathOperator{\Supp }{supp}
\DeclareMathOperator{\Id }{Id}
\DeclareMathOperator{\D}{div}
\newtheorem{theorem}{Theorem}[section]
\newtheorem{lemma}[theorem]{Lemma}
\newtheorem{proposition}[theorem]{Proposition}
\newtheorem{remark}[theorem]{Remark}
\setlist[enumerate]{itemsep=3pt}
\setlist[itemize]{itemsep=3pt}
\def \TT  {\mathbb{T}} 
\def \RR {\mathbb{R}}  
\def \NN {\mathbb{N}}  
\def \ZZ {\mathbb{Z}}  
\def \p {\partial}
\def \l {\lambda}
\def \ep {\epsilon}
\def \ez {\mathbf{e}_z}
\def \et {\mathbf{e}_\theta}
\def \er {\mathbf{e}_r}
\numberwithin{equation}{section}
\begin{document}

\title[Norm inflation  in supercritical   spaces]{Sharp norm inflation  for 3D Navier-Stokes equations in supercritical  spaces}

\author{Xiaoyutao Luo}

\address{State Key Laboratory of Mathematical Sciences, Academy of Mathematics and Systems Science, Chinese Academy of Sciences, Beijing  100190, China}

\email{xiaoyutao.luo@amss.ac.cn}

\subjclass[2020]{35Q30}

\keywords{Navier-Stokes equations, Norm inflation, Ill-posedness}
\date{\today}

\begin{abstract}
We prove that the incompressible Navier-Stokes equations exhibit norm inflation in $\dot B^{s}_{p,q}(\RR^3)$ with smooth, compactly supported initial data. Such norm inflation is shown  in all supercritical $\dot B^{s}_{p,q} $ near the scaling-critical line $s = -1+ \frac{3}{p}$ except at $s=0$. The growth mechanism differs depending on the sign of the regularity index $s$:  forward energy cascade driven by mixing for  $s>0$ and backward energy cascade caused by un-mixing for $s<0$. The construction also demonstrates arbitrarily large, finite-time growth of the vorticity, the first of such examples  for  the Navier-Stokes equations.
\end{abstract}

\date{\today}

\maketitle

\section{Introduction}\label{sec:intro}

In this paper, we consider  the three-dimensional  Navier-Stokes equations    of   incompressible    viscous fluids, 
\begin{equation}\label{eq:NS}
\begin{cases}
\p_t u -\Delta u + u\cdot \nabla u + \nabla p = 0 &\\
\D u =0  & \\
u|_{t = 0} = u_ 0 &
\end{cases}
(t,x) \in [0,T] \times \RR^3
\end{equation}
where $ u: [0,T] \times \RR^3  \to \RR^3$ is the unknown velocity field, $ p: [0,T] \times \RR^3 \to \RR $ is the scalar pressure, and $ u_0 : \RR^3 \to \RR^3$ is the initial data.

Our focus is the well-posedness/ill-posedness of the Cauchy problem \eqref{eq:NS}, where the notion of criticality plays a central role. Recall that a Banach space $X $ is called critical for \eqref{eq:NS} if its norm $ |\cdot |_X$ is invariant under the scaling
\begin{equation}\label{eq:intro_scaling_NS} 
u(t,x) \mapsto u_\l (t,x) : = \l u(\l^2 t,\l x )  .
\end{equation}
Examples of critical spaces include  $ L^3$, $\dot H^{\frac{1}{2}}$ and more generally, $ \dot B^{-1 +  {3}/{p}}_{p, \infty }$. Heuristically, \eqref{eq:NS} exhibits local well-posedness in critical or subcritical spaces but might become ill-behaved in supercritical regimes where scaling favors nonlinearity over dissipation.

The well-posedness/ill-posedness of \eqref{eq:NS} in supercritical spaces poses significant challenges.  While there have been \emph{existence} results in supercritical spaces, from Leray's seminal weak solutions~\cite{MR1555394} to Calder\'on's splitting \cite{MR968416,2410.07816}, these solutions are not known to preserve their initial data's regularity, except in the energy space. Such a   limitation motivates fundamental questions about the persistence of regularity and stability in supercritical settings.

In \cite{2404.07813} it was proved that \eqref{eq:NS} exhibits norm inflation in $ H^{s} $ when $ 0< s  < \frac{1}{2}$, i.e. solution with arbitrarily small initial data can be arbitrarily large in an arbitrarily short time.  The main result of this paper establishes norm inflation for \eqref{eq:NS} in \emph{almost all} supercritical spaces near the critical regularity threshold:

\begin{theorem}\label{thm:Besov}
For any $s \neq 0$ and $1\leq p,q \leq \infty$ such that $-3 < s -   \frac{3}{p} < -1 $, the 3D Navier-Stokes equations \eqref{eq:NS} are strongly ill-posed in $\dot B^{s}_{p,q} (\RR^3)$  in the following sense.

For any $\ep>0$, there exists a time $0< t^* \leq  \ep   $ and a   solution $u$ of \eqref{eq:NS} such that the following holds.

\begin{enumerate}

\item  The solution $u$ is smooth on $[0,t^*]$, i.e. 
$$
u \in C^\infty( [0,t^*] \times \RR^3 ) .
$$

\item The initial data $u |_{t= 0 } =u_0    \in C^\infty_c(   \RR^3)$   and
\begin{equation}\label{eq:thm_Hs_NS_1}
| u_0 |_{ \dot {B}^{s}_{p , 1 }(\RR^3) } \leq \ep.
\end{equation}

\item  $u$ develops norm inflation   at $t= t^* $:
\begin{equation}\label{eq:thm_Hs_NS_2}
| u   (  t^*  ) |_{\dot{B}^{s}_{p, \infty  }(\RR^3) } \geq \ep^{-1} .
\end{equation}

\end{enumerate}

\end{theorem}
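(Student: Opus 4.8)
The plan is to build the inflating solution from an explicit, highly oscillatory but carefully structured \emph{background shear/mixing flow} together with a localized perturbation, and to track the nonlinear interaction that transfers energy across frequencies. Concretely, I would look for $u = v + w$ where $v$ is an approximate solution one can write down by hand --- a two-dimensional mixing flow of the form $v(t,x) = (\,b(t,x_2,x_3),\,0,\,0\,)$ or a superposition of such shears at many scales, chosen so that its transport operator $v\cdot\nabla$ moves a Fourier-localized bump from frequency $\lambda_0$ to frequency $\lambda_1 \gg \lambda_0$ (for $s>0$, ``mixing'': frequency support grows, forward cascade) or from $\lambda_1$ back down to $\lambda_0$ (for $s<0$, ``un-mixing'': frequency support shrinks, backward cascade). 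The scaling $-3 < s - 3/p < -1$ is exactly the window in which the low-frequency/high-frequency $\dot B^s_{p,q}$ norms can be made to trade off in the right direction: a unit-amplitude bump at frequency $\lambda$ carries $\dot B^s_{p,1}$ norm $\sim \lambda^{s}|\text{supp}|^{1/p}\cdot(\text{amplitude})$, so concentrating (resp.\ spreading) in physical space while moving in frequency lets the $\dot B^s_{p,\infty}$ norm blow up while the initial $\dot B^s_{p,1}$ norm stays tiny, provided $s$ and $3/p$ sit on the correct side of the critical line but $s\neq 0$.

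The key steps, in order, are: (1) \emph{Choice of building block and time scale.} Fix $\epsilon$, pick a large parameter $N$ (number of frequency generations or mixing rate) and a short time $t^* = t^*(N) \le \epsilon$; define $u_0$ as a smooth, compactly supported, divergence-free vector field made of a low-frequency packet (for $s>0$) of amplitude tuned so that $|u_0|_{\dot B^s_{p,1}} \le \epsilon$, together with a small high-frequency ``engine'' that drives the shear. (2) \emph{Construct the approximate solution $\bar u$} solving \eqref{eq:NS} up to an error by iterating the transport-by-shear mechanism: at each generation the bilinear term $\bar u\cdot\nabla\bar u$ feeds the next frequency shell; estimate the frequency support and amplitude of $\bar u(t^*)$ to get $|\bar u(t^*)|_{\dot B^s_{p,\infty}} \ge 2\epsilon^{-1}$. (3) \emph{Control dissipation} by keeping $t^*\lambda_{\max}^2$ bounded (or logarithmically small), so that the heat semigroup does not kill the cascade on $[0,t^*]$ --- this forces $t^*$ to be small, which is fine. (4) \emph{Error estimate / exact solution.} Write $u = \bar u + w$, derive the equation for $w$ with forcing equal to the residual of $\bar u$, and run a fixed-point/energy argument in a space (e.g.\ a subcritical $\dot H^\sigma$ or $L^\infty_t L^2\cap L^2_t\dot H^1$ with high regularity, exploiting smoothness and compact support) to show $w$ stays negligible in $\dot B^s_{p,\infty}$ on $[0,t^*]$; conclude $|u(t^*)|_{\dot B^s_{p,\infty}} \ge \epsilon^{-1}$. (5) \emph{Smoothness and localization}: since $u_0\in C_c^\infty$ and $t^*$ is short, standard parabolic regularity gives $u\in C^\infty([0,t^*]\times\RR^3)$; the vorticity-growth corollary follows by reading off $|\nabla\times u(t^*)|$ from the frequency support.

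For the regime $s<0$ the roles reverse: the initial data must already contain the high-frequency, spatially \emph{spread-out} packet (small in $\dot B^s_{p,1}$ because $s<0$ penalizes high frequencies weakly only if $3/p$ compensates --- here is where $-3 < s-3/p$ enters), and the shear flow must \emph{un-mix} it, i.e.\ concentrate it in physical space and push it to lower frequency, so that $\dot B^s_{p,\infty}$ grows. Technically this is a time-reversed version of the mixing construction; one can often obtain the un-mixing flow by running a mixing flow backward in the transport (inviscid) dynamics and then checking that adding viscosity on the short interval $[0,t^*]$ is a harmless perturbation. The endpoint $s=0$ is genuinely excluded because there the $\dot B^0_{p,q}$ norm is (almost) transport-invariant, so neither mixing nor un-mixing changes it at leading order --- this matches the hypothesis $s\neq 0$.

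The main obstacle I expect is step (4): closing the argument for the error $w$ in the \emph{supercritical} space itself. Since $\dot B^s_{p,q}$ is below scaling, the equation for $w$ is not well-posed there in any robust sense, so one cannot simply run a contraction in that norm; instead the trick is to carry $w$ in a \emph{higher-regularity} norm where the flow is well-posed on $[0,t^*]$ (using that everything is smooth and compactly supported, and $t^*$ tiny), prove $w$ small there, and then \emph{interpolate/embed down} to show it is also small in $\dot B^s_{p,\infty}$ --- while simultaneously ensuring the large $\dot B^s_{p,\infty}$ norm of $\bar u(t^*)$ is \emph{not} seen by that higher-regularity norm at any earlier time (otherwise $w$ couldn't be controlled). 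Balancing these competing requirements --- the approximate solution must be ``large in the weak norm, moderate in the strong norm,'' and the error must be ``small in the strong norm'' --- is the delicate heart of the construction, and is presumably where the precise choice of amplitudes, frequencies, and the number of cascade generations as functions of $\epsilon$ gets pinned down.
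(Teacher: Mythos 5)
Your high-level scheme --- an explicit approximate solution built from a 2D flow plus a passively transported velocity component, mixing for $s>0$ and un-mixing for $s<0$, followed by an error estimate in a strong norm that is interpolated down to $\dot B^s_{p,\infty}$ --- is indeed the paper's scheme. But three concrete gaps remain. First, your building block: a shear or $2\tfrac12$D flow that is independent of one Cartesian direction is not compactly supported and does not even lie in $L^p(\RR^3)$ for $p<\infty$, which is most of the theorem's range; truncating it destroys the exact transport structure. The paper's resolution is to bend the translation-invariant direction into the azimuthal direction of an axisymmetric field supported in a thin torus of major radius $\nu^{-1}$ and minor radius $\mu^{-1}$ with $\nu=\mu^{1-b}\ll\mu$: the swirl $\overline u_\theta$ is transported by a stationary radial vortex $(\overline u_r,\overline u_{z})$ in the $rz$-plane, and the price --- the $1/r$ terms of the axisymmetric Euler equations and a divergence corrector --- carries the small factor $\mu^{-1}\nu$. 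That factor, together with supercriticality (which controls the viscous residual $-\Delta\overline u$), is precisely what makes the residual $\overline E$ small; your sketch never identifies where the smallness of the residual of a ``superposition of shears'' would come from, and the iterated ``each generation feeds the next shell'' picture is not what is needed --- the paper's growth is a one-shot transport over $[0,t^*]$ in which each $\rho$-derivative of the sheared phase gains a fixed factor $\ep^{-N}$.

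Second, smoothness on $[0,t^*]$ is emphatically not ``standard parabolic regularity because $t^*$ is short'': by construction $t^*$ is a large multiple ($\sim\ep^{-N-2}$) of $|\nabla u_0|_{L^\infty}^{-1}$, hence well beyond the guaranteed local existence time, and one must prove the solution survives. The paper does this with a bootstrap on $|\nabla u|_{L^\infty}$ fed by $H^k$ energy estimates for $w=u-\overline u$ that gain $(\mu^{-1}\nu)^{\delta_k}$ with $\delta_k>\tfrac12$, exactly enough to survive the lossy embedding $H^{3/2+}\hookrightarrow L^\infty$; this is the quantitative content your step (4) would have to supply. Third, for $s<0$ you never say how to prove either the upper bound on $|u_0|_{\dot B^s_{p,1}}$ or the lower bound on $|u(t^*)|_{\dot B^s_{p,\infty}}$ for a negative-order norm. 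The paper's devices are (a) taking the profiles to be high-order derivatives ($f=\tfrac{d^N}{d\rho^N}\widetilde f$ and $u_{0,\theta}=\partial_\rho^{k_0}[\cdots]$) so that repeated integration by parts against a dual test function yields $|u_0|_{\dot B^s_{p,1}}\lesssim\ep^2$, and (b) the reverse interpolation $|f|_{\dot B^s_{p,\infty}}\gtrsim |f|_{\dot W^{1,p}}^{\,s}\,|f|_{L^p}^{\,1-s}$ (with $s<0$), so that un-mixing --- which makes $|f|_{L^p}$ large at $t^*$ while keeping $|f|_{\dot W^{1,p}}$ moderate --- produces the inflation. Without these two mechanisms the $s<0$ half of the theorem is not reachable from your outline.
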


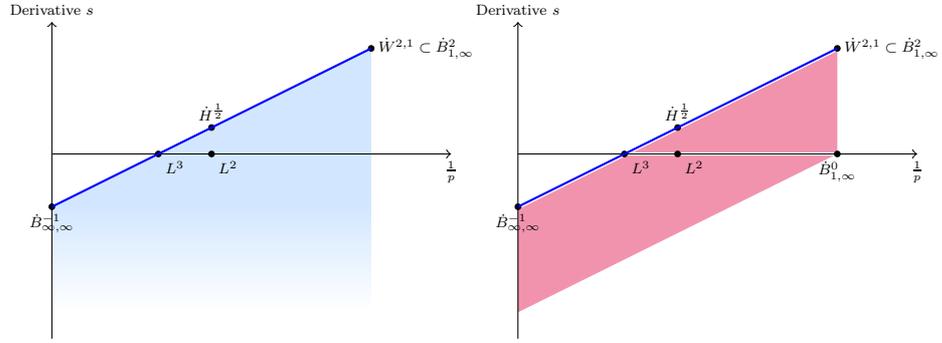
\begin{figure}[ht]
\centering 
\begin{subfigure}[b]{0.3\textwidth}
\begin{tikzpicture}[scale=0.70, every node/.style={transform shape}]

\fill[fill={rgb, 255:red, 210; green, 231; blue, 255 }  ,fill opacity= 1 ] (0,-1)  -- (0,-2) .. controls  (4,-2)  ..  (6,-1.5) -- (6, 2);
{  \usepgflibrary {shadings}
\fill[top color={rgb, 255:red, 210; green, 231; blue, 255 }] (0,-3) rectangle (6,-1); }

\draw[->] (0,0) -- (7.5,0) node[anchor=north] {\scriptsize   $\frac{1}{p}$};
\draw[->] (0,-3.5) -- (0,2.5) node[anchor=south ] {\scriptsize Derivative $s$};

\filldraw	(0,-1) circle (1.5pt) node[anchor=north] {\scriptsize  $\dot B^{- 1}_{\infty , \infty } $} ;
\filldraw (2,0) circle (1.5pt) node[anchor=north west] {\scriptsize $   L^{3}   $} ;
\filldraw (3,0) circle (1.5pt) node[anchor=north west] {\scriptsize $   L^{2}   $} ;
\filldraw (3,0.5) circle (1.5pt) node[anchor=south ] {\scriptsize $   \dot H^{\frac{1}{2}}   $} ;


\filldraw	 
(6,2) circle (1.5pt) node[anchor=west] {\scriptsize $ \dot W^{2, 1} \subset \dot B^{2}_{1 , \infty } $};


\draw[thick,blue] (0,-1) -- (6,2);

\end{tikzpicture}
\end{subfigure}
\qquad\qquad
\begin{subfigure}[b]{0.3\textwidth}
\begin{tikzpicture}[scale=0.70, every node/.style={transform shape}]

\fill[fill={rgb, 95:red, 90; green,  55; blue,  65 }  ,fill opacity= 0.6 ] (0.00,-1)   -- (0.00,-3) --  (6,-0) -- (6, -0.03)-- (2.04,-0.03)--  (0.1,-1  ) ;

\fill[fill={rgb, 95:red, 90; green,  55; blue,  65 }  ,fill opacity=  0.6 ] (2+0.1, 0.03)  -- (6,  0.03 ) --  (6, 2 - 0.05 ) -- (2.16 , 0.03)  ;

\draw[->] (0,0) -- (7.5,0) node[anchor=north] {\scriptsize   $\frac{1}{p}$};
\draw[->] (0,-3.5) -- (0,2.5) node[anchor=south ] {\scriptsize Derivative $s$};

\filldraw	(0,-1) circle (1.5pt) node[anchor=north] {\scriptsize  $\dot B^{- 1}_{\infty , \infty } $} ;
\filldraw (2,0) circle (1.5pt) node[anchor=north west] {\scriptsize $   L^{3}   $} ;
\filldraw (3,0) circle (1.5pt) node[anchor=north west] {\scriptsize $   L^{2}   $} ;
\filldraw (3,0.5) circle (1.5pt) node[anchor=south ] {\scriptsize $   \dot H^{\frac{1}{2}}   $} ;

\filldraw	 
(6,2) circle (1.5pt) node[anchor=west] {\scriptsize $ \dot W^{2, 1} \subset \dot B^{2}_{1 , \infty } $};


\filldraw	 
(6 ,-0) circle (1.5pt) node[anchor= north] {\scriptsize $     \dot B^{  0 }_{1 , \infty } $};

\draw[thick,blue] (0,-1) -- (6,2);

\end{tikzpicture}
\end{subfigure} 
\caption{Left: Supercritical region in light blue. Right:   Ill-posedness region of Theorem \ref{thm:Besov} in pink}
\label{fig:ill-posedness}
\end{figure}

As depicted in Figure \ref{fig:ill-posedness}, Theorem \ref{thm:Besov} establishes norm inflation for \eqref{eq:NS} in $ \dot B^{s}_{p,q}$ near the critical line $s = -1+ \frac{3 }{p}$ except possibly at $s=0$. It remains an open question whether such norm inflation can occur in supercritical Lebesgue spaces $L^p$, $2< p< 3$.

\begin{remark}
\hfill

\begin{itemize}

\item  The upper limit $s  = -1+ \frac{3}{p}$ is sharp---the solution map is continuous for small data in $B^{-1 + 3/p }_{p, \infty}$, $p<\infty$, see for instance~\cite[Theorems 5.40]{MR2768550}.

\item Theorem \ref{thm:Besov} extends the famous $\dot B^{-1}_{\infty, \infty}$ norm inflation of Bourgain-Pavlovi\'{c}    to  $\dot B^{-s}_{\infty, \infty}$, $ -3 < s < -1$.

\item  While the lower limit $s  = -3+ \frac{3}{p}$ appears to be technical, the result covers most of the practical cases. In fact, $L^1 \not\hookrightarrow \dot B^{s}_{p, q }$ for $s  < -3+ \frac{3}{p}$.

\item By standard embedding results, Theorem \ref{thm:Besov} implies the same strong ill-posedness in Sobolev spaces $\dot W^{s,p}$  as well.

\end{itemize}
\end{remark}

As said in the abstract, the driving mechanism of norm inflation differs between $s>0 $ and $s<0$. The growth in the case $s>0$ is based on mixing, producing a forward energy cascade. On the contrary, for $s<0$, the growth of negative norms results from a backward energy cascade caused by un-mixing. Therefore the restriction of $s \neq 0$ is essential in our construction. 
\subsection{Application to small-scale formation}

A key implication of the norm inflation in the $s >0$
case is its connection to the small-scale formation of \eqref{eq:NS}. In potential blowup scenarios, the transfer of energy to finer scales can lead to singularities. To quantify this, one may examine the growth ratio of subcritical norms, such as:
\begin{equation}\label{eq:intro_growth}
\frac{| \omega  (t )|_{L^\infty } }{|\omega_0 |_{L^\infty }} 
\end{equation}
where $\omega$ denotes the vorticity of the solution. By the Beale-Kato-Majda criterion~\cite{MR763762}, unbounded growth of \eqref{eq:intro_growth} is a sufficient and necessary condition of a finite-time blowup. Recent numerical investigations~\cite{Hou23} found a class of pure swirl initial data that exhibits significant sustained growth of \eqref{eq:intro_growth}, providing empirical support for potential finite-time blowup.

Despite recent developments on blowup scenarios/small-scale formation in related fluid models~\cite{MR3245016,MR3359050,MR4334974,MR4255049,MR4527834,2410.22920}, no rigorous lower bounds for \eqref{eq:intro_growth} have been established for solutions of the Navier-Stokes equations to date.  In other words, there remains no proof of growth—let alone sustained growth or blowup—for solutions of \eqref{eq:NS}.

A direct corollary of our construction demonstrates  unbounded growth ratios within finite time for smooth solutions of \eqref{eq:NS}:

\begin{theorem}\label{thm:growth}
For any $M>0$, there exist  a time $  t^* >0   $ and a   smooth solution $u$ of \eqref{eq:NS} on $[0,t^*]$ such that its vorticity $\omega= \nabla \times u$ satisfies
\begin{equation}\label{eq:thm:growth}
\frac{| \omega (t^*)|_{L^\infty } }{| \omega_0 |_{L^\infty }}  \geq M .
\end{equation}

In particular, the initial data $u_0\in C^\infty_c(\RR^3)$ can be arbitrarily small in any prescribed supercritical (homogeneous) Sobolev/Besov space.

\end{theorem}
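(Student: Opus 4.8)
The plan is to deduce Theorem \ref{thm:growth} directly from the $s>0$ case of Theorem \ref{thm:Besov}, exploiting the fact that the norm-inflation construction in that regime is driven by a forward energy cascade, i.e. the solution transfers its mass to increasingly fine spatial scales. First I would fix any $s \in (0, 1)$ and $p$ with $-3 < s - 3/p < -1$ (for instance $p = \infty$, $s$ slightly less than $1$), so that $\dot B^s_{p,q}$ is supercritical; applying Theorem \ref{thm:Besov} with a suitably small $\ep$ furnishes, for each $\ep$, a time $t^* \le \ep$ and a smooth solution $u$ on $[0,t^*]$ with $|u_0|_{\dot B^s_{p,1}} \le \ep$ but $|u(t^*)|_{\dot B^s_{p,\infty}} \ge \ep^{-1}$. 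The key step is then to convert the growth of a supercritical velocity norm into growth of $|\omega(t^*)|_{L^\infty}/|\omega_0|_{L^\infty}$. Because $\omega_0 = \nabla \times u_0$ is a fixed smooth compactly supported field, $|\omega_0|_{L^\infty}$ is controlled by a fixed higher-order norm of $u_0$; and because the construction has $u_0$ small in all supercritical norms, one can arrange $|\omega_0|_{L^\infty}$ to be bounded (indeed as small as one likes) by choosing the parameters of the building block appropriately. For the upper bound on $|\omega(t^*)|_{L^\infty}$ versus the lower bound on the Besov norm: a bounded velocity that is large in $\dot B^s_{p,\infty}$ with $s>0$ must oscillate at small scales, and a standard Bernstein/frequency-localization argument shows that a uniform $L^\infty$ bound on $\omega = \nabla \times u$ (equivalently an $\dot{C}^{0,1}$-type bound on $u$) forces $|u(t^*)|_{\dot B^s_{p,\infty}}$ to stay bounded in terms of $|u(t^*)|_{L^\infty}$ and $|\omega(t^*)|_{L^\infty}$ by interpolation, since $\dot B^s_{p,\infty}$ for $0<s<1$, $p=\infty$ interpolates between $L^\infty$ and $\dot C^{0,1} \hookrightarrow \dot B^1_{\infty,\infty}$. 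Hence a large $\dot B^s_{p,\infty}$ norm at time $t^*$ forces $|\omega(t^*)|_{L^\infty}$ to be large, and the ratio \eqref{eq:thm:growth} exceeds $M$ once $\ep$ is chosen small enough relative to $M$ and to the fixed (bounded) quantities $|\omega_0|_{L^\infty}$ and $\sup_{[0,t^*]}|u|_{L^\infty}$.

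Two points need care. First, one must verify that $\sup_{[0,t^*]}|u(t)|_{L^\infty}$ stays bounded independently of $\ep$ — or at least grows slower than the target ratio — so that the interpolation inequality genuinely transfers the Besov blow-up into an $L^\infty$-vorticity blow-up; this should follow from the explicit profile used in the proof of Theorem \ref{thm:Besov}, whose $L^\infty$ norm I expect to be controlled (the energy-cascade construction spreads a fixed amount of "stuff" over many scales rather than concentrating amplitude). Second, the interpolation embedding $\dot B^s_{\infty,\infty} \hookrightarrow [L^\infty, \dot B^1_{\infty,\infty}]_{s}$ and the embedding $\|\nabla u\|_{L^\infty} \gtrsim \|u\|_{\dot B^1_{\infty,\infty}}$ for divergence-free $u$ (so $\|\omega\|_{L^\infty} \gtrsim \|u\|_{\dot B^1_{\infty,\infty}}$ up to Riesz transforms, using that $u$ is recovered from $\omega$ by a Calderón–Zygmund operator — here one uses $\omega$ rather than $\nabla u$ and the Biot–Savart law, which is bounded on $\dot B^1_{\infty,\infty}$ modulo the usual log-loss that is harmless after working with a single dyadic block) must be invoked in the homogeneous setting; restricting to a frequency-localized building block (as the construction does) removes the low-frequency subtleties. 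The main obstacle is thus not conceptual but bookkeeping: ensuring that every constant appearing in the chain

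\begin{equation}
\ep^{-1} \le |u(t^*)|_{\dot B^s_{p,\infty}} \lesssim |u(t^*)|_{L^\infty}^{1-s}\, |\omega(t^*)|_{L^\infty}^{s} \cdot (\text{fixed constants})
\end{equation}

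depends only on $s,p,q$ and on the fixed profile, not on $\ep$, so that sending $\ep \to 0$ drives $|\omega(t^*)|_{L^\infty}$ — and hence the ratio in \eqref{eq:thm:growth} — to infinity. Once this is in place, choosing $\ep = \ep(M)$ small enough yields \eqref{eq:thm:growth}, and the final sentence of the theorem about smallness of $u_0$ in any prescribed supercritical Sobolev/Besov space is immediate from part (2) of Theorem \ref{thm:Besov} together with the embeddings among supercritical spaces near the critical line.
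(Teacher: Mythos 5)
Your overall strategy --- deduce the vorticity growth from the Besov norm inflation of Theorem \ref{thm:Besov} by an interpolation argument --- is genuinely different from the paper's, and it has a gap that I do not see how to close. First, the parameter choice you propose ($p=\infty$, $s$ slightly below $1$) violates the hypothesis $-3<s-\tfrac{3}{p}<-1$ of Theorem \ref{thm:Besov}: for $s>0$ that hypothesis forces $p<\tfrac{3}{1+s}<3$, so $p=\infty$ is never admissible in the forward-cascade regime. This is not a cosmetic slip, because your key inequality
\begin{equation}
|u(t^*)|_{\dot B^{s}_{p,\infty}} \lesssim |u(t^*)|_{L^\infty}^{1-s}\,|\omega(t^*)|_{L^\infty}^{s}
\end{equation}
is an interpolation between $L^\infty$ and $\dot C^{0,1}$ and only makes sense for $p=\infty$. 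For the admissible $p<\infty$ the analogous interpolation lands you in $L^p$-based spaces, $|u|_{\dot B^{s}_{p,\infty}}\lesssim |u|_{L^p}^{1-s}|\nabla u|_{L^p}^{s}$, and passing from a large $|\nabla u(t^*)|_{L^p}$ to a large $|\omega(t^*)|_{L^\infty}$ requires a reverse H\"older step that needs the \emph{exact} solution $u(t^*)$ to be supported in a set of controlled measure; only the approximate solution $\overline u$ is compactly supported (with $|\Supp\,\overline u|\sim\mu^{-2}\nu^{-1}\to 0$), so the ``fixed constants'' in your chain in fact degenerate in $\mu$, the parameter that is sent to infinity. A second problem is your expectation that $\sup_{[0,t^*]}|u|_{L^\infty}$ stays bounded: from Lemma \ref{lemma:estimates_overline_u} one has $|\overline u|_{L^\infty}\sim_\ep \mu^{-s+2/p}\nu^{1/p}$, which blows up as $\mu\to\infty$ whenever $p<\tfrac{3}{1+s}$; the construction concentrates amplitude, it does not merely spread mass over scales. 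Finally, even granting the rest, your argument produces growth of $|\nabla u|_{L^\infty}$ rather than of $|\nabla\times u|_{L^\infty}$, and the Biot--Savart step you sketch again lives in the $p=\infty$ framework that is unavailable here.

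The paper's proof sidesteps all of this by never leaving the explicit construction: it takes $s$ close to $2$ and $p=1$ (so that $\dot B^{2-\delta}_{1,1}$ embeds into every prescribed supercritical space --- your choice $s\in(0,1)$ would also fail to deliver the ``in particular'' clause for, say, $X=\dot B^{1.9}_{1,q}$), computes the radial vorticity component $\overline\omega_r(t^*)=\p_z\overline u_\theta(t^*)$ of the approximate solution directly (Lemma \ref{lemma:vorticity_s>0}), which is $\gtrsim \ep^{-2}\mu^{1-s}\mu^{2}\nu$ in $L^\infty$, compares it with $|\nabla u_0|_{L^\infty}\lesssim\ep^{2}\mu^{1-s}\mu^{2}\nu$ so that the $\mu$-dependent factors cancel in the ratio, and controls $|\nabla w(t^*)|_{L^\infty}$ by the stability estimate of Proposition \ref{prop:no_blowup}. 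If you want to salvage your softer route, you would have to replace the interpolation step by a direct lower bound on the vorticity of $\overline u$, which is exactly what the paper does.
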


One can show a similar growth ratio  for other   norms such as $|u|_{W^{s,p}}$ with $s>0 $ in \eqref{eq:thm:growth}.   

The growth in Theorem \ref{thm:growth} is significant but mere for a  short burst of   time. To find potential blowup candidates, one needs to establish a feedback loop of self-sustained growth in \eqref{eq:NS}, which is far beyond the scope of the current paper.

\subsection{Background and literature}

We now discuss the related background of our result.
As the literature on well-posedness of \eqref{eq:NS} is too vast to give a complete account here, we only mention several key milestones and refer to the monographs  \cite{MR3469428,MR4475666} for further references.

\textbf{Weak and mild solutions.} For $L^2$ initial data, the existence of a global weak solution was established in Leray's seminal work~\cite{MR1555394}. Leray solutions are constructed via compactness arguments and satisfy the energy inequality. However, their regularity and uniqueness remain open problems. See recent \cite{Hou23} for numerics of potential blowup and~\cite{2112.03116} for non-uniqueness under external forcing.

An alternative framework, the mild solution approach  treats \eqref{eq:NS} as a perturbation of the heat equation and involves the equivalent integral formulation
\begin{equation}\label{eq:intro_mild}
u(t) = e^{\Delta t} u_0 - \int_0^t e^{\Delta (t -\tau )} \mathbb{P}\D(u\otimes u ) (\tau) \, d\tau 
\end{equation}
where $\mathbb{P} $ is the Leray projection.

In the framework of mild solution \eqref{eq:intro_mild}, the notion of criticality plays an important role. Under the scaling \eqref{eq:intro_scaling_NS},  the following is a well-known hierarchy of embedded critical spaces 
\begin{equation*}
\dot{H}^{\frac{1}{2}} \subset L^3 \subset \dot B^{-1 + \frac{3}{p}}_{p, \infty } \,\,, p<\infty   \subset  BMO^{-1} \subset  \dot B^{-1}_{\infty, \infty } .
\end{equation*}

In \cite{MR166499}, Fujita and Kato initiated the research on mild solutions, establishing local well-posedness of \eqref{eq:NS} for $\dot H^{\frac{1}{2}}$ initial data. 

Subsequent developments in $L^3$  \cite{MR760047,MR833416} and in Besov spaces \cite{Cannone1993-1994,MR1617394} culminated with Koch-Tataru's $BMO^{-1}$ theorem \cite{MR1808843} that remains the sharpest  local well-posedness of \eqref{eq:NS} without structural assumptions. See recent \cite{2503.14699} for the sharp non-uniqueness construction to the Koch-Tataru's $BMO^{-1}$ theorem.

\textbf{Ill-posedness in critical regimes.}
It had been conjectured~\cite{MR1724946,MR2099035} that \eqref{eq:NS} is ill-posed in $\dot B^{-1}_{\infty, \infty}$, the largest critical space. Indeed, the groundbreaking work of Bourgain-Pavlovi\'{c} \cite{MR2473255} confirmed this via a norm inflation construction. We also mention  other mild ill-posedness   results around the same time of~\cite{MR2473255}. In~\cite{MR2473256}, the author proved that the solution map  is not $C^2$ in $\dot B^{-1}_{\infty, q }$ for $q>2$, whereas in \cite{MR2566571}, the authors show a jump discontinuity in $\dot B^{-1}_{\infty, \infty }$. The result of Bourgain-Pavlovi\'{c}~\cite{MR2473255} was improved in subsequent work  \cite{MR2601621,MR3254526,MR3276597}, eventually showing ill-posedness in $\dot B^{-1}_{\infty, q } $ for $q>2$ in \cite{MR3254526} and all $ 1 \leq q\leq 2 $ in ~\cite{MR3276597}.

This scarcity of ill-posedness results contrasts starkly with the vast scope of well-posedness theory. Remarkably, in the critical regime, norm inflation phenomena are confined to endpoint spaces $p=\infty$--- the solution map is continuous for small data in $B^{-1 + 3/p }_{p, \infty}$, see for instance~\cite[Theorems 5.40]{MR2768550}.

\textbf{Supercritical regimes and beyond.} In supercritical spaces--where nonlinearity dominates--the classical well-posedness theory of mild solutions collapses. These spaces include:
\begin{equation*}
\dot{H}^{s} , (s < \tfrac{1}{2}), \quad L^p , (p < 3), \quad \dot B^{s}_{p,q} , \left(s < -1 + \tfrac{3}{p}\right).
\end{equation*}

Despite this, existence results via compactness method survive,  as in classical Leray weak solutions. Calderón~\cite{MR968416} pioneered a hybrid approach via solution splitting  $u(t) = u_{m}  + u_w $ : the mild part  $u_{m}$ is constructed à la Kato in critical spaces, while the weak part is a Leray-type weak solution of a perturbed system. This ``nonlinear interpolation'' approach was revisited in various critical settings~\cite{MR3614655,MR3902173,MR3916974}.

Calderón's splitting was   recently extended to supercritical Besov spaces $\dot B^{s}_{p,q} $, $ -1 +\tfrac{2}{p}< s< -1 +\tfrac{3}{p}$ by Popov~\cite{2410.07816}.  However, such solutions cannot maintain the initial data's regularity, as our norm inflation result demonstrates.

While controlling the solutions in supercritical regimes poses significant challenges, recent developments suggest that \eqref{eq:NS} should be ill-posed in such regimes. In particular, it was shown by convex integration that \eqref{eq:NS} admits nonunique \emph{weak solutions} under supercritical regularity~\cite{MR3898708,MR4462623}. However, those weak solutions exhibit characteristics much different from the smooth ones considered in the norm inflation literature. We also mention   recent~\cite{2503.14699}, where the authors combined convex integration with aspects of critical mild formulation to show nonuniqueness of large $BMO^{-1}$ solutions.

\textbf{Small scale formation and blowup.} In recent years, there have been significant advances in understanding blowup scenarios and small-scale formation in related fluid models~\cite{MR3289364,houluo14,MR3245016,MR3486169,MR4334974,MR4255049,MR4527834,2410.22920}. The growth mechanisms of small-scale formation are sometimes closely related to norm inflation constructions, as demonstrated by~\cite{MR3245016,MR3625192}

However, no growth mechanism had been rigorously established for solutions of \eqref{eq:NS}  prior to Theorem \ref{thm:growth}. This is in part due to the viscous dissipation prohibits or suppresses many   growth mechanisms known in the inviscid setting, including the DiPerna-Majda $2 \frac{1}{2}$D setup \cite{MR0877643}, the 3D Hou-Luo scenario \cite{houluo14,2210.07191},  Bourgain-Li's deformation of oscillation \cite{MR3359050}, and Elgindi's Euler self-similar  blowups \cite{MR4334974}. Furthermore, Leray's backward self-similar solutions, once considered prime candidates for singularity formulation, were ruled out in the 1990s~\cite{MR1397564,MR1643650}.

In~\cite{2407.06776}, the authors show singularity formation for a generalized Navier-Stokes system with fractional dissipation of small order and external forces in $L^1_t C^{1,\ep}$. We refer to~\cite{2309.08495,2407.06776,2410.22920} for a detailed discussion of the methodology.

As in other norm inflation constructions, our solutions exhibit only an instantaneous burst of growth. A central open question remains: Can solutions to \eqref{eq:NS} achieve prolonged, sustained growth or even finite time blowup?

\subsection{Outline of construction}

The argument of proving Theorem \ref{thm:Besov} is based on~\cite{2404.07813}. The essential steps consist of: 
\begin{enumerate}[label=(\alph*)]
\item Constructing a simplified approximate system capturing key dynamics of \eqref{eq:NS};
\item Demonstrating norm inflation  within this approximate system;
\item Establishing proximity between the approximate and exact solutions of \eqref{eq:NS}.
\end{enumerate}

This   general scheme of proving ill-posedness of fluid  equations  was already rooted in the pioneering work~\cite{MR3359050,MR3320889} but brought to systematic treatments in recent works~\cite{2107.07463,2210.17458}.

\textbf{Step (a): Approximations.}
To find the approximate system, we follow the two steps reductions as in~\cite{2404.07813}:
\begin{itemize}
\item \textit{3D Navier-Stokes to 3D Euler}: In supercritical spaces, viscous effects are dominated by the nonlinearity over short timescales, allowing \eqref{eq:NS} to approximate Euler dynamics.
\item \textit{3D Euler to 2D Euler + transport}: Exploiting axisymmetric anisotropy, we reduce the system into a 2D Euler equation (governing radial and vertical components) and a linear transport equation for the swirl component.
\end{itemize}

\textbf{Step (b): Norm inflation mechanism.}
For the reduced system of ``2D Euler + liner transport'', norm inflation arises solely from the swirl component’s evolution, while radial/axial components remain stationary.  

\begin{itemize}
\item \textit{Case $s>0$}: Non-Lipschitz velocity fields (permitted by supercriticality) induce rapid growth via  mixing. Our construction is based on \cite{2404.07813}, with enhanced bookkeeping for Besov estimates.
\item \textit{Case $s<0$}: Initial data is carefully ``premixed''---time evolution unmixes it, inducing a decay of positive regularity norms, which is turned into the growth of negative regularity norms.
\end{itemize}

\textbf{Step (c): Stability of the approximation.}

Once we establish the desired growth for the approximate solution,  critical to the proof is showing that norm inflation persists under the full \eqref{eq:NS} dynamics. 

This requires that our reduced system remain a good approximation of \eqref{eq:NS}. On one hand,   the supercritical regularity allows to neglect the viscous dissipation until the norm inflation occurs. On the other hand, the anisotropic flow structures provide the necessary smallness allowing to recover from ``2D Euler+linear transport'' to the full 3D Euler dynamics.

\subsection*{Organization of the paper}
The rest of the paper is structured as follows.
\begin{itemize}
\item Section~\ref{sec:pre} introduces  preliminaries of necessary functional settings and essential inequalities.

\item Section~\ref{sec:overline_u} constructs approximate solutions for the reduced system of ``2D Euler + transport''.

\item Sections~\ref{sec:overline_u_inflation_s>0} (for $s>0$) and~\ref{sec:overline_u_inflation_s<0} (for $s<0$) prove norm inflation in the approximate system.

\item Section~\ref{sec:no_blowup} compares approximate and exact solutions, establishing smoothness and stability up to the critical time $t^*$.

\item Section~\ref{sec:proof} combines these results to conclude Theorem~\ref{thm:Besov} and Theorem~\ref{thm:growth}.
\end{itemize}

\subsection*{Acknowledgment}
The author is partially supported by NSFC No. 12421001 and  No. 12288201.

\subsection{Notations}

For a vector- or tensor- value function $f$, its modulus $|f|$ denotes   the square root of the sum of squares of each component.

For a Banach space $X$, its norm is denoted by $|\cdot|_X$. Functional norms in this paper are mostly defined on $\RR^3$, so we often use $|\cdot |_{L^p }  $ and $ |\cdot |_{W^{k,p} }$ for brevity.

For two quantities $X,Y \geq 0$, we write $X \lesssim Y$ if $X \leq C Y$ holds for some constant $C >0$, and similarly $X \gtrsim Y$ if $X \geq C Y$, and $X \sim Y$ means $X \lesssim Y$ and $X \gtrsim Y $ at the same time.

In addition, $X \lesssim_{a,b,c,\dots} Y$ means $X \leq C_{a,b,c\dots} Y$ for a constant $C_{a,b,c\dots}$ depending on parameters $a,b,c\dots$.

Throughout the paper, for $k\in \NN$, $ \nabla^k $ refers to the full gradient in $\RR^3$. We denote the Fourier transform of a function $f$ by $\widehat{f}$.

\subsection{Sobolev and Besov spaces}

We recall the definition of Sobolev spaces for integer $ k \in \NN$,
\begin{equation}\label{eq:def_Wkp}
|f |_{   W^{k, p }  } =   \sum_{0 \leq  i \leq k } |\nabla^i f|_{L^p  } \qquad |f |_{   \dot W^{k , p }  }  =   |\nabla^k f|_{L^p  } .
\end{equation}

We also recall the fractional Sobolev spaces of the following definition. For real $s \in \RR $ and $1 <  p < \infty $, 
\begin{equation}\label{eq:def_Wsp}
|f  |_{  W^{s, p }}  =  | J^s f |_{L^p } \qquad  |f  |_{  \dot W^{s, p }}  =  | \Lambda^s f |_{L^p }
\end{equation}
where $J^s$ is the Bessel potential with the Fourier multiplies $ \widehat{J^s} = (1 + |\xi|^2)^{ \frac{s}{2}}$ and $\Lambda^s$ is the Riesz potential with the Fourier multiplies $ \widehat{\Lambda^s} =    |\xi|^s $.  When $ p =2$, we denote $  H^s: =   W^{s,2 } $ and $\dot H^s: =  \dot W^{s,2 } $. 

It is well-known that for $1< p< \infty$, when $s =k$ is an integer, both definitions \eqref{eq:def_Wkp} and \eqref{eq:def_Wsp} coincide. In this paper we only consider either $k\in \NN$ with $1 \leq p \leq \infty$ or $k\in \RR$ with $1 <  p < \infty$.

\subsection{Besov spaces}

We recall the definition of Besov spaces by the Littlewood-Paley decomposition. Since we only use some duality and interpolation properties of Besov spaces, we refer to \cite{MR2768550} for the details.

Let $\{ \Delta_q\}_{q \in \ZZ} $  be a sequence of Littlewood-Paley projection operators such that $\widehat{\Delta_q} $ is supported in frequencies $|\xi| \sim 2^{q}$ and $\Id = \sum_{ q\in \ZZ} \Delta_q $ in the sense of distribution.

For $s \in \RR$, $1 \leq  p, q \leq \infty$, the homogeneous Besov norm $\dot B^{s}_{p,q} $ is given by
\begin{equation}\label{eq:pre_besov_1}
|f|_{\dot B^{s}_{p,q} } = \Big| 2^{s q}| \Delta_q f |_{L^p }    \Big|_{\ell^q(\ZZ)}
\end{equation}

We also frequently the following fact:  
\begin{equation}\label{eq:besov_embedding}
\dot B^{k}_{p,1 } \hookrightarrow  \dot W^{k, p } \hookrightarrow \dot B^{k}_{p, \infty }
\end{equation}

We do not use the definition \eqref{eq:pre_besov_1} to actually compute Besov norms. Instead, we often use the following tools. The first is a convenient interpolation result.
\begin{lemma}\label{lemma:besov_interpolation}\cite[Proposition 2.22]{MR2768550}
For any $s_1 < s_2 $ and $ 0 < \alpha < 1$, let $s: = \alpha s_1 + (1 -\alpha ) s_2   $. Then
$$
|f |_{\dot{ B }^{  s }_{p,1 }} \lesssim_{\alpha,s_1,s_2} |f|_{\dot{ B }^{ s_1   }_{p, \infty  }}^\alpha   |f|_{\dot{ B }^{ s_2   }_{p, \infty  }}^{1-\alpha }.
$$
\end{lemma}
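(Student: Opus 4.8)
\textbf{Proof proposal for Lemma~\ref{lemma:besov_interpolation}.}

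The plan is to prove this directly from the Littlewood–Paley definition \eqref{eq:pre_besov_1} by splitting the $\ell^1$-sum over dyadic blocks into low and high frequency ranges, using the $\dot B^{s_2}_{p,\infty}$ bound on the high-frequency tail and the $\dot B^{s_1}_{p,\infty}$ bound on the low-frequency tail, and then optimizing over the splitting point. Write $a_j := 2^{sj}|\Delta_j f|_{L^p}$ for the quantities whose $\ell^1$-norm we must control, and set $A_1 := |f|_{\dot B^{s_1}_{p,\infty}}$, $A_2 := |f|_{\dot B^{s_2}_{p,\infty}}$, so that $|\Delta_j f|_{L^p} \le A_1 2^{-s_1 j}$ and $|\Delta_j f|_{L^p} \le A_2 2^{-s_2 j}$ for all $j \in \ZZ$. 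Then for any $N \in \ZZ$,
$$
|f|_{\dot B^{s}_{p,1}} = \sum_{j \in \ZZ} 2^{sj}|\Delta_j f|_{L^p} \le A_1 \sum_{j \le N} 2^{(s - s_1)j} + A_2 \sum_{j > N} 2^{(s - s_2)j}.
$$

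The point of the convexity hypothesis $s = \alpha s_1 + (1-\alpha)s_2$ with $s_1 < s_2$ and $0 < \alpha < 1$ is exactly that $s - s_1 = (1-\alpha)(s_2 - s_1) > 0$ and $s - s_2 = -\alpha(s_2 - s_1) < 0$, so both geometric series converge: the first sums to a constant (depending only on $s_1, s_2, \alpha$) times $2^{(s-s_1)N}$, and the second to a constant times $2^{(s-s_2)N}$. Hence
$$
|f|_{\dot B^{s}_{p,1}} \lesssim_{\alpha, s_1, s_2} A_1 2^{(s - s_1)N} + A_2 2^{(s - s_2)N}.
$$
Now choose $N$ to balance the two terms, i.e. pick $N \in \ZZ$ as close as possible to the real number $N_0$ solving $A_1 2^{(s-s_1)N_0} = A_2 2^{(s-s_2)N_0}$, namely $2^{(s_2 - s_1)N_0} = A_2/A_1$. (If $A_1 = 0$ or $A_2 = 0$ the statement is trivial since then $f$ is a polynomial / zero in the homogeneous setting; otherwise $A_1, A_2 \in (0,\infty)$ and $N_0$ is well-defined.) With this choice each term is comparable, up to a constant depending on $\alpha, s_1, s_2$ (from rounding $N_0$ to an integer), to $A_1^{\alpha} A_2^{1-\alpha}$: indeed plugging $2^{N_0} = (A_2/A_1)^{1/(s_2-s_1)}$ into $A_1 2^{(s-s_1)N_0}$ and using $(s-s_1)/(s_2-s_1) = 1-\alpha$ gives $A_1 \cdot (A_2/A_1)^{1-\alpha} = A_1^\alpha A_2^{1-\alpha}$, as desired.

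I do not expect a genuine obstacle here; the only points requiring a little care are (i) handling the degenerate cases $A_1 A_2 = 0$ and $A_1 = \infty$ or $A_2 = \infty$ separately (in all of which the inequality holds trivially), and (ii) the rounding of the optimal $N_0$ to an integer, which only costs a multiplicative factor bounded in terms of $2^{|s-s_1|} + 2^{|s-s_2|}$, hence absorbed into the constant $C_{\alpha, s_1, s_2}$. Alternatively, one may simply cite \cite[Proposition 2.22]{MR2768550} as indicated, since this is the standard real-interpolation identity $(\dot B^{s_1}_{p,\infty}, \dot B^{s_2}_{p,\infty})_{\theta, 1} = \dot B^{s}_{p,1}$ specialized to the stated exponents.
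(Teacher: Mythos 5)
Your proof is correct. The paper itself gives no argument for this lemma---it simply cites \cite[Proposition 2.22]{MR2768550}---and your dyadic splitting with the optimized cutoff $N$ is exactly the standard proof of that cited convexity inequality; the exponent bookkeeping ($s-s_1=(1-\alpha)(s_2-s_1)>0$, $s-s_2=-\alpha(s_2-s_1)<0$, and the balancing yielding $A_1^\alpha A_2^{1-\alpha}$) all checks out, and your handling of the degenerate and rounding issues is adequate.
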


The second, valid for all $s\in \RR$ but especially useful for $s<0$, is the duality  
\begin{equation}\label{eq:besov_duality}
|  f |_{\dot B^{s}_{p,q }} \sim \sup_{  \phi \in Q^{- s  }_{p',q'}}     \int_{\RR^3} f \phi \, dx 
\end{equation}
where  for $1 \leq p,q \leq \infty $  $Q^{- s }_{p', q'} : = \{  \phi  \in  \mathcal{S} (\RR^3): |\phi  |_{\dot{ B }^{     -s  }_{p', q' }  }   \leq  1 \}$ with $ \mathcal{S}(\RR^3)$ the Schwartz class and $p',q' $ being the H\"older conjugates  of $p,q $.

Finally, our main tool is the following   Kato-Ponce commutator estimate.

\begin{proposition}\label{prop:kato_ponce}\cite[Propostion 4.2]{MR951744}
Let  $u$ be a smooth solution of \eqref{eq:NS} on $[0,t_0]$ for some $t_0>0$ such that $ |\nabla u   |_{L^\infty( [0,t_0] ;L^\infty) } \leq M$ for some constant $M \geq 1$.

Then for any   $k \geq 0$ and $1 < p< \infty$,
\begin{equation}\label{eq:prop_kato_ponce}
|u(t)|_{ {W}^{k,p}  }  \leq    |u_0 |_{  {W}^{k,p}  } e^{C_{k,p} M t } \quad \text{for all $t \in [0, t_0 ]$}
\end{equation}
for universal constants $C_{k,p} $ independent of $M$, $ t_0$, and $t$.
\end{proposition}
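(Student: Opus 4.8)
\emph{Proof plan.} This is a standard consequence of the Kato--Ponce commutator estimate, and the argument I have in mind is the usual $L^p$-energy estimate. Since $1<p<\infty$, the norm in question is $|w|_{W^{k,p}}=|J^k w|_{L^p}$, so it suffices to bound $|J^k u(t)|_{L^p}$. Set $v:=J^k u$, so that $\D v=0$. Apply $J^k$ to the Leray-projected equation $\p_t u-\Delta u+\mathbb{P}(u\cdot\nabla u)=0$, pair with $|v|^{p-2}v$, and integrate over $\RR^3$; since $u$ is smooth (and, in the situations where this proposition is invoked, rapidly decaying) every integration by parts is legitimate and one gets the identity
$$
\frac1p\frac{d}{dt}|v|_{L^p}^p + D(t) = -\int_{\RR^3} J^k\mathbb{P}(u\cdot\nabla u)\cdot|v|^{p-2}v\,dx,\qquad D(t)=\int_{\RR^3}\!\!|v|^{p-2}|\nabla v|^2\,dx+(p-2)\!\int_{\RR^3}\!\!|v|^{p-2}\bigl|\nabla|v|\bigr|^2\,dx.
$$
For $p\ge2$ the term $D(t)$ is manifestly nonnegative; for $1<p<2$ it is $\ge(p-1)\int|v|^{p-2}|\nabla v|^2\ge0$ by Kato's inequality $|\nabla|v||\le|\nabla v|$. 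In either case $D(t)\ge0$, and I simply drop it.

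It then remains to bound the nonlinear term by $C_{k,p}M|v|_{L^p}^p$. Using $J^k\mathbb{P}=\mathbb{P}J^k$ and $J^k(u\cdot\nabla u)=u\cdot\nabla v+[J^k,u\cdot\nabla]u$, split it as $\mathbb{P}(u\cdot\nabla v)+\mathbb{P}[J^k,u\cdot\nabla]u$. For the commutator (absent when $k=0$) I use that $\mathbb{P}$ is bounded on $L^p$ together with the Kato--Ponce commutator estimate \cite{MR951744}: with $f=u$ and $g=\nabla u$,
$$
\bigl|\mathbb{P}[J^k,u\cdot\nabla]u\bigr|_{L^p}\lesssim_p\bigl|[J^k,u_l]\p_l u\bigr|_{L^p}\lesssim_{k,p}|\nabla u|_{L^\infty}|J^{k-1}\nabla u|_{L^p}+|J^k u|_{L^p}|\nabla u|_{L^\infty}\lesssim_{k,p}M\,|v|_{L^p},
$$
where the last step uses that $\langle\xi\rangle^{k-1}|\xi|\langle\xi\rangle^{-k}$ is an $L^p$ Fourier multiplier, so $|J^{k-1}\nabla u|_{L^p}\lesssim_{k,p}|J^k u|_{L^p}$. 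For the transport term I write $\mathbb{P}(u\cdot\nabla v)=u\cdot\nabla v+\nabla q$ with $q:=-\Delta^{-1}\D(u\cdot\nabla v)$. Since $\D u=0$, the pure transport piece pairs to zero: $\int(u\cdot\nabla v)\cdot|v|^{p-2}v\,dx=\tfrac1p\int u\cdot\nabla|v|^p\,dx=0$. The key point is that, using $\D u=\D v=0$ \emph{twice}, $\D(u\cdot\nabla v)=\p_l\bigl((\p_m u_l)\,v_m\bigr)$, so $\nabla q=-\nabla\Delta^{-1}\p_l\bigl((\p_m u_l)v_m\bigr)$ is a Calder\'on--Zygmund operator applied to the bilinear expression $(\nabla u)\,v$; hence $|\nabla q|_{L^p}\lesssim_p|\nabla u|_{L^\infty}|v|_{L^p}\le M|v|_{L^p}$ and $\bigl|\int\nabla q\cdot|v|^{p-2}v\,dx\bigr|\le|\nabla q|_{L^p}|v|_{L^p}^{p-1}\le M|v|_{L^p}^p$.

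Collecting these bounds gives $\tfrac1p\tfrac{d}{dt}|v|_{L^p}^p\le C_{k,p}M|v|_{L^p}^p$, hence $\tfrac{d}{dt}|v|_{L^p}\le C_{k,p}M|v|_{L^p}$ on the open set where $|v(t)|_{L^p}>0$ (and if $|v_0|_{L^p}=0$ there is nothing to prove). Gr\"onwall's inequality then yields $|v(t)|_{L^p}\le|v_0|_{L^p}e^{C_{k,p}Mt}$ on $[0,t_0]$, which is exactly \eqref{eq:prop_kato_ponce}; note that $M$, $t_0$ and $t$ enter only through the product $Mt$, so $C_{k,p}$ is indeed independent of them.

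The single genuine subtlety — and the reason for the particular bookkeeping above — is that the right-hand side must involve only $M=|\nabla u|_{L^\infty}$ and never $|u|_{L^\infty}$ (which would be uncontrolled here on $\RR^3$). This is why one cannot estimate the nonlinearity by a fractional Leibniz rule applied to $u\otimes u$ and must instead peel off a commutator, and also why the Leray projection has to be unwound using the divergence-free condition twice, so that the pressure correction $\nabla q$ sees $\nabla u\cdot v$ rather than $u\cdot\nabla v$. Everything else — the sign of $D(t)$, the Mikhlin-multiplier bounds, and the density/regularity justifications for the energy identity — is routine; the substantive analytic input is the commutator estimate of \cite{MR951744}.
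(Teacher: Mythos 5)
The paper does not prove this proposition at all---it is imported verbatim from Kato--Ponce \cite[Proposition 4.2]{MR951744}---so there is no internal argument to compare against; your proof is essentially the standard one from that reference and it is correct. In particular you correctly isolate the two points that make the bound close with only $M=|\nabla u|_{L^\infty}$ on the right: the commutator estimate $|[J^k,u\cdot\nabla]u|_{L^p}\lesssim M|J^ku|_{L^p}$, and the double use of the divergence-free condition to rewrite $\D(u\cdot\nabla v)=\p_l((\p_m u_l)v_m)$ so that the pressure correction is a Calder\'on--Zygmund operator acting on $(\nabla u)\,v$. The only steps you wave at (the $\delta$-regularization of $|v|^{p-2}$ near zeros of $v$ when $1<p<2$, and the decay needed to justify the integrations by parts and the a priori finiteness required for Gr\"onwall) are indeed routine in the setting where the proposition is invoked.
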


\section{The approximate solution}\label{sec:overline_u}

In this section, we construct a class of approximate solutions of \eqref{eq:NS}. In the construction, there are  two free parameters  whose values we fixed in Section \ref{sec:no_blowup} depending on the given $\ep>0$.

The main result is Proposition \ref{prop:approximate} below.  In the next two sections, we will show that the approximate solution $\overline{u}$ also stays close to the exact solution $u$ at least until $t^*$.

\begin{proposition}\label{prop:approximate}
Let $s,p $ be given as in Theorem \ref{thm:Besov}. For any $\ep>0$ and $\mu,\nu \geq 1$ satisfying \eqref{eq:def_nu_lambda}, there exists a smooth, divergence-free, axisymmetric vector field $\overline{u}: [0,\infty) \times \RR^3 \to \RR^3$ and a time $t^*(\mu,\nu,\ep) >0$ (defined in \eqref{eq:def_critical_t*}) such that the following holds.

\begin{enumerate}
\item \textbf{Regularity estimates:} $\overline{u} $ satisfies for any $k\in \NN$, the estimates 
\begin{align}\label{eq:prop_error_1}
| \overline{u}   |_{L^\infty ([0,t^*]; \dot W^{ k,{q} } )   }  \leq     C_{\ep,  k,{q}  }    \mu^{ k -s  } \mu^{ \frac{ 2}{p}  -\frac{ 2}{{q} }} \nu^{\frac{1}{p} - \frac{1}{{q}} }     .
\end{align}

\item \textbf{Approximation property:} There exist  a smooth autonomous pressure $\overline{p}:   \RR^3  \to  \RR      $  and   an error field $\overline{E}: \RR^+ \times  \RR^3  \to  \RR^3    $ such that 
\begin{equation}\label{eq:prop_error_2}
\begin{cases}
\p_t  \overline{u} -\Delta \overline{u} + \overline{u}\cdot \nabla \overline{u}   + \nabla \overline{p} = \overline{E} & \text{in $(t,x) \in \RR^+ \times  \RR^3  $} \\
\D \overline{u}  = 0 &\\
\overline{u} |_{t = 0 } = u_0  & 
\end{cases}
\end{equation}
where the error field $\overline{E}$, compactly supported in $\RR^3$ at each time $t \in \RR^+$, satisfy for any $k \in \NN $, the estimates 
\begin{equation}\label{eq:prop_error_3}
|  \overline{E}  |_{L^\infty ([0,t^*]; \dot W^{ k,{q} } )   }  \leq   C_{\ep,k , {q} }   \mu^{k - s }  (\mu^{-1} \nu)   (\mu^{1+\frac{ 2}{p}-  s}    \nu^\frac{1}{p}  )  \mu^{ \frac{ 2}{p}  -\frac{ 2}{{q} }} \nu^{\frac{1}{p} - \frac{1}{{q}} }  .
\end{equation}

\end{enumerate}

\end{proposition}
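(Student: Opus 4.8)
\textbf{Plan of proof of Proposition \ref{prop:approximate}.}
The plan is to build $\overline u$ explicitly as an axisymmetric-with-swirl vector field whose radial/axial part is a time-independent shear-type flow and whose swirl component is transported by that flow. Concretely, I would work in cylindrical coordinates $(r,\theta,z)$ and fix a smooth, compactly supported profile at a base length scale; then rescale it to frequency $\sim\mu$ in the ``mixing'' variable and introduce a second anisotropic parameter $\nu$ controlling the aspect ratio of the support (this is the role of the two free parameters $\mu,\nu$, constrained by \eqref{eq:def_nu_lambda}). The radial/axial components $(\overline u^r,\overline u^z)$ are chosen to be a divergence-free, autonomous $2$D Euler steady state (so $\overline u^r\p_r+\overline u^z\p_z$ annihilates its own vorticity and only a pressure term $\nabla\overline p$ is produced), while the swirl $\overline u^\theta$ solves the linear transport equation $\p_t\overline u^\theta+(\overline u^r\p_r+\overline u^z\p_z)\overline u^\theta=0$. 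The critical time $t^*$ in \eqref{eq:def_critical_t*} should be taken comparable to the inverse of the shear rate of the background flow — i.e. roughly $\mu^{s-2/p-\cdots}\nu^{-\cdots}$ read off from \eqref{eq:prop_error_1} — so that over $[0,t^*]$ the transported swirl has undergone $O(1)$ mixing but the solution is still smooth with controlled norms.

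\textbf{Regularity estimates.} For part (1) I would just track how each derivative lands. Since $\overline u$ is obtained from a fixed profile by the anisotropic rescaling $x\mapsto$ (stretch by $\mu$ in the mixing directions, by $\mu/\nu$ or similar in the remaining direction), every $\nabla^k$ costs a factor $\mu^k$ (the mixing direction always dominates once $t\le t^*$, because transport by a smooth autonomous field distorts the profile at a controlled polynomial-in-$t$ rate, absorbed into $C_{\ep,k,q}$), the $L^q$ norm of the rescaled profile contributes the volume factor $\mu^{2/p-2/q}\nu^{1/p-1/q}$ by Bernstein/change of variables, and the amplitude is normalized by the requirement $|u_0|_{\dot B^s_{p,1}}\le\ep$, which forces the overall prefactor $\mu^{-s}$. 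Multiplying these gives exactly \eqref{eq:prop_error_1}. The only subtlety is that $\overline u^\theta(t)$ is a composition with the (autonomous, smooth) flow map of the $2$D part, so one needs the a priori bound that this flow map and all its derivatives stay bounded on $[0,t^*]$ with constants depending only on the profile and $k$ — this follows from Grönwall once $t^*$ times the Lipschitz constant of the background is $O(1)$, which is how $t^*$ is calibrated.

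\textbf{Approximation property.} For part (2), plug $\overline u$ into the Navier-Stokes operator. The transport equation for $\overline u^\theta$ and the steady-state property of $(\overline u^r,\overline u^z)$ are designed so that the main Euler nonlinearity cancels up to $\nabla\overline p$; what remains in $\overline E$ is (i) the viscous term $-\Delta\overline u$, and (ii) the ``missing'' pieces of the $3$D-axisymmetric nonlinearity that are not captured by the ``$2$D Euler $+$ linear transport'' reduction — namely the swirl-squared centrifugal term $(\overline u^\theta)^2/r\,\er$ and the curvature/lower-order terms coming from the true cylindrical Laplacian and advection operators. Term (i) is $\lesssim\mu^2$ worse in derivatives than $\overline u$ itself, which accounts for a factor $\mu^{-1}\nu$ once one remembers $t^*\sim$ (that combination) — i.e. viscosity is subcritical on the timescale $t^*$, this is the ``neglect dissipation'' heuristic made quantitative. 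Term (ii) is where the anisotropy pays off: each such term carries an extra power of the amplitude and of $1/r\sim$ (inverse support size), which I would bound by the displayed factor $\mu^{1+2/p-s}\nu^{1/p}$; since $\mu,\nu$ satisfy \eqref{eq:def_nu_lambda} this is a genuine smallness. Collecting (i) and (ii), differentiating $k$ times (another $\mu^k$) and taking $L^q$ (another volume factor $\mu^{2/p-2/q}\nu^{1/p-1/q}$) yields \eqref{eq:prop_error_3}. The compact support of $\overline E$ at each time is inherited from the compact support of the profile together with finite propagation of the smooth autonomous transport on $[0,t^*]$.

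\textbf{Main obstacle.} The routine part is the bookkeeping of scaling factors; the genuinely delicate point is verifying that the error really is of the claimed size \eqref{eq:prop_error_3} and not larger — in particular that the centrifugal and cylindrical-correction terms, which are the price of replacing the true $3$D axisymmetric system by ``$2$D Euler $+$ transport'', carry the full extra smallness $(\mu^{-1}\nu)(\mu^{1+2/p-s}\nu^{1/p})$ uniformly on $[0,t^*]$. This forces the constraint \eqref{eq:def_nu_lambda} linking $\mu$ and $\nu$, and it is the compatibility of that constraint with the growth rate needed for norm inflation (established in Sections \ref{sec:overline_u_inflation_s>0}–\ref{sec:overline_u_inflation_s<0}) and with the stability estimate (Section \ref{sec:no_blowup}) that makes the whole construction work. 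I expect the proof to spend most of its effort there, with the definitions \eqref{eq:def_nu_lambda}, \eqref{eq:def_critical_t*} chosen precisely so that all three demands — smoothness up to $t^*$, smallness of $\overline E$, and largeness of the final norm — hold simultaneously.
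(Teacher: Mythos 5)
Your plan is essentially the paper's construction: an anisotropic axisymmetric field supported at distance $\nu^{-1}$ from the axis with thickness $\mu^{-1}$, whose $rz$-part is a compactly supported radial vortex (a stationary 2D Euler solution) and whose swirl is passively transported by it, with the error split into the viscous term $-\Delta\overline u$ and the leftover $1/r$ cylindrical terms, and the same scaling bookkeeping $\mu^{k-s}\mu^{2/p-2/q}\nu^{1/p-1/q}$.

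One structural point you gloss over and would need to fix: a 2D stream-function field $(\overline u_r,\overline u_{z})=\nabla^\perp_{r,z}\psi$ is divergence-free only in the planar sense $\p_r\overline u_r+\p_z\overline u_z=0$, whereas the cylindrical divergence is $\p_r\overline u_r+\tfrac{1}{r}\overline u_r+\p_z\overline u_z$. The paper therefore adds a corrector $\overline u_{z,c}=C\mu^{-1}f(\mu\rho)/r$ (see \eqref{eq:def_overline_u_zc}) to cancel the $\overline u_r/r$ term; this corrector is smaller than the principal part by the factor $\mu^{-1}\nu$, but it destroys the exact 2D-Euler/transport structure and generates its own error terms ($\overline u_{z,c}\p_z\overline u_\theta$, $\overline u_{z,c}\p_z\overline u_r$, and the advection of $\overline u_{z,c}$ itself), which must be estimated alongside the centrifugal and $\tfrac{1}{r}\overline u_\theta\overline u_r$ terms you list. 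Without the corrector the proposition's divergence-free claim fails outright; with it, your error accounting must be enlarged accordingly, though all the new terms carry the same smallness $(\mu^{-1}\nu)$ and land under the bound \eqref{eq:prop_error_3}. Also note that \eqref{eq:prop_error_3} is a pointwise-in-time bound on $\overline E$ that does not involve $t^*$; the comparison of the viscous error with the target is done purely through the supercriticality relation \eqref{eq:def_small_b}, \eqref{eq:def_nu_lambda}, not by multiplying by $t^*$ as your heuristic suggests (that multiplication only enters later, in the stability section).
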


\subsection{Outline of construction}

We give a brief description of the construction of $\overline{u}$. As   discussed in the introduction, we first neglect the viscous effects and consider the axisymmetric Euler equations
\begin{equation}\label{eq:axisymmetric_euler}
\begin{cases}
\p_t  {  u}_{  \theta } +  {u}_{  r} \p_r  {u}_{  \theta } +  {u}_{  z} \p_z  {u}_{  \theta }+ \frac{1}{r}  {u}_{  \theta } {u}_{  r}  = 0 & \\
\p_t  {u}_{  r } +  {u}_{  r} \p_r  {u}_{  r } +  {u}_{  z} \p_z  {u}_{  r } - \frac{1}{r}  {u}_{  \theta }^2 + \p_r  {p} = 0& \\
\p_t  {u}_{  z} +  {u}_{  r} \p_r  {u}_{ z} +  {u}_{  z} \p_z  {u}_{  z}  +  \p_z    {p}  = 0 .&  
\end{cases}
\end{equation}
In \eqref{eq:axisymmetric_euler}, the nonlinear terms decompose into a two dimensional transport part of ${u}_{  r}, {u}_{ z} $ and $1/r$ terms. In our anisotropic setup (see Figure \ref{fig:schematic}), $1/r$ is much less than a full derivative, and hence $1/r$ terms are of lower order among the nonlinear terms. Dropping all $1/r$ terms, we then observe that 
\begin{itemize}
\item A swirl component  $u_\theta$ governed by a passive transport equation.
\item Radial and vertical components $(u_r, u_z) $ satisfying the 2D Euler equations in the $ r z $-plane.
\end{itemize}

There is a lot of freedom in constructing solutions to this reduced system. Specifically we take the following.
\begin{itemize}
\item The radial and vertical components $(u_r, u_z) $  to be a stationary radial vortex   in the $ r z $-plane.

\item The swirl $u_\theta $ supported inside the radial vortex and   passively transported  by $(u_r, u_z) $ in the  $ r z $-plane .
\end{itemize}

\subsection{Auxiliary coordinates}
Now we start the construction. We work in the cylindrical coordinate centered at the origin with $(x_1, x_2 ,x_3) \mapsto (\theta,r,z )$ defined by
\begin{equation}
x_1 = r \cos \theta , \quad x_2 = r \sin \theta ,\quad x_3 =z 
\end{equation}
and the associated orthonormal frame
\begin{equation}
\et = (   - \sin \theta ,  \cos \theta, 0 )  \quad \er = (   \cos \theta  ,  \sin \theta , 0) \quad  \ez = (0,0,1) .
\end{equation}

We note the point-wise bounds $ | \nabla^k \er | \lesssim r^{-k}  $ and $ | \nabla^k \et | \lesssim r^{-k}  $ for $k\in \NN$.

For any vector field $v:\RR^3\to  \RR^3$, we denote its cylindrical components by $ v = v_\theta \et  + v_r \er +  v_z \ez $. We say a function $f :\RR^3\to  \RR $ is axisymmetric if it does not depend on $\theta$, and similarly for a vector-valued function $v :\RR^3\to  \RR^3$  if its components $ v_\theta, v_r ,v_z $ are axisymmetric.

By an induction argument and the identity $\nabla f = \p_r f \er + \p_z f \ez $,  for any axisymmetric $f: \RR^3 \to \RR $ we have the following standard point-wise bounds for $k\in \NN$
\begin{equation}\label{eq:diff_r_z}
|\nabla^k f (x)|\lesssim_k \sum_{ 0\leq i  \leq  k}   | \p_r^{ i} \p_z^{k-i} f (x)|.
\end{equation}

We also work with  a shifted polar coordinate in the $rz$-plane (see Figure \ref{fig:schematic})
\begin{equation}\label{eq:def_rho_varphi}
r = \nu^{-1}  + \rho \cos(\varphi ), \quad z = \rho \sin(\varphi ).
\end{equation}
centered at the point $(r,z) = (\nu^{-1}  ,0  ) $ for some   $\nu\geq 1 $, a large parameter whose value we will fix in Section \ref{sec:no_blowup} depends on $\ep>0$, see also \eqref{eq:def_nu_lambda} below.

We also note the following useful point-wise bounds: if $f :\RR^3 \to \RR  $ is axisymmetric, then for $ k \in \NN$
\begin{equation}\label{eq:diff_rho_varphi}
|\nabla^k f (x)|\lesssim_k \sum_{ 0 \leq i +j \leq  k} \frac{1}{\rho^{k - i   }} | \p_\rho^{ i} \p_\varphi^j f (x)| \quad \text{when $ \rho  >0$}
\end{equation}
which can be proved by passing first to \eqref{eq:diff_r_z} and then use induction in the $rz$-plane. Another useful inequality, valid for any $f:\RR^3 \to \RR $ is
\begin{equation}\label{eq:diff_rho_varphi_2}
| \p_\rho^{ k}   f (x)| \leq   |\nabla^k_{r,z}   f (x)| \leq |\nabla^k f (x)|  \quad \text{when $ \rho  >0$}
\end{equation}
where $\nabla^k_{r,z} $ denotes taking the gradient in $rz$-plane.

We use the convention that if a function $f$ is defined by variables $r,z,\rho,\varphi $, we use the same letter $f$ to indicate its Euclidean counterpart $\RR^3\to \RR$ defined implicitly by these variables and vice versa.

It is worth emphasizing that all the norms $L^p$ or $W^{k,p}$ appearing below are taken in the original Euclidean variable of $\RR^3$, and we only use variables $(\theta,r, z)$ and $(\rho , \varphi )$ to simply the notations.

\subsection{The parameters and the setup}\label{subsec:setup}

In the construction of the  approximate solution $\overline u $ there are a few parameters. The first group consists of exponent parameters that depend on the input exponent $s, p$ from Theorem \ref{thm:Besov}. 

Throughout the paper we fix
\begin{itemize}
\item $ 0< b \ll 1$ given by
\begin{equation}\label{eq:def_small_b}
b = \frac{1}{10} \Big( - 1 - s + \frac{ 3}{p}  \Big).
\end{equation}

\item $N \gg 1 $  given by
\begin{equation}\label{eq:def_large_N}
N = \max\{\frac{100}{s},100 \} .
\end{equation}
\item $k_0 =6 $ such that $k_0  > |s|+3$, to facilitate the constructions when $s<0$.

\end{itemize}

The two major frequency parameters $\mu, \nu  \geq 1$  related by
\begin{equation}\label{eq:def_nu_lambda} 
\nu   = \mu^{1- b }   .
\end{equation}
The  parameter $\mu  $ represents the magnitude of a   derivative, while   $\nu^{-1}$ is the scale of the distance to the origin. Their values are fixed till the very end of the proof depending on $\ep>0$ and other universal constants. 

Define the critical time $t^* > 0$, the onset of norm inflation:
\begin{equation}\label{eq:def_critical_t*}
t^* =  \ep^{- N -2   }  \mu^{ - 1  -\frac{2}{p} + s } \nu^{-\frac{1}{p}} ,
\end{equation}
where $ N >0 $ is the large exponent that we fixed in \eqref{eq:def_large_N}. Note that  $  t^* \to 0 $ as $\mu \to \infty$ due to \eqref{eq:def_small_b} and \eqref{eq:def_nu_lambda}.

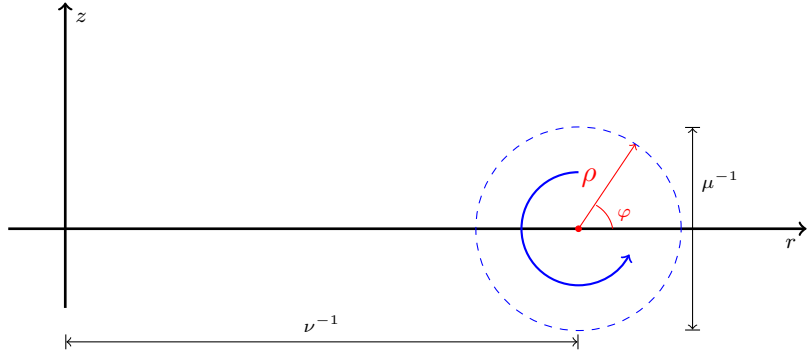
\begin{figure}[ht]
\centering
\begin{tikzpicture}[scale=1.5]

\draw[->][line width=1]  (-1.5,-0.7) -- (-1.5,2);
\draw  (-1.5,2) node[anchor=north west] {\scriptsize  $z$};
\draw[->][line width=1] (-2, 0) -- (5,0);
\draw (5.0,0)  node[anchor=north east] {\scriptsize $r$};

\draw[->][color= red  ] (3,0) -- (3.5,0.75);
\draw  (3.25,0.45) node[anchor= east] {  $\color{red} \rho$  };

\filldraw[color= red  ]  (3,0)  circle (0.7pt); 

\draw[|<->|] (-1.5,-1)--++ (4.5,0) node[pos=0.5,above]{\tiny $\nu^{-1}$};

\draw[|<->|] (4,-0.9)--++ (0,1.8); 

\draw (4.5, 0.6) node[anchor=north east]{\tiny $\mu^{-1}$};

\draw[blue  ,dashed] (3,0) circle (0.9cm);

\begin{scope}[shift={(3,0)}]  
\draw[->, thick, blue] (-270:0.5cm) arc (-270:-27:0.5cm);
\end{scope}

\draw[color= red  ] ( 3.3,0) arc(10:60:0.3) node[pos=0.5,right]{\tiny $\varphi$};

\end{tikzpicture}
\caption{Anisotropic setup in  $rz$-plane with  $\mu^{-1} \ll \nu^{-1}$. }
\label{fig:schematic}
\end{figure}

Throughout the construction, let us fix the profiles $f,g \in C^\infty_c(\RR )$  (in the shifted polar coordinate $(\rho, \varphi)$, see Figure \ref{fig:schematic}) such that
\begin{equation}\label{eq:def_profiles}
\begin{cases}
f ' (   \rho  )   =   1 \quad \text{for $ 1\leq \rho  \leq \frac{3}{2}$} & \\
f(\rho) = \frac{d^{N}}{d \rho^{N}} \widetilde{f}(\rho) \quad \text{for some $ \widetilde{f} \in  C^\infty_c(\RR )$} & \\
\Supp f(\rho)   \subset \{  \frac{1}{2}\leq \rho  \leq 2 \}    & \\
\Supp g(\rho)  \subset \{ 1\leq \rho  \leq \frac{3}{2} \} .
\end{cases}
\end{equation}

These profiles will be used to construct various components of the approximate solution $\overline{u}$.

\subsection{Definition of \texorpdfstring{$\overline{u}_r$}{ur} and \texorpdfstring{$\overline{u}_z$}{uz}}
Using the large parameters $\mu,\nu  \geq 1 $ and profiles $f,g$, we define the   approximate solution $\overline{u} :\RR^+ \times \RR^3 \to \RR^3 $ in cylindrical 
\begin{equation}\label{eq:def_overline_u}
\begin{aligned}
\overline{u}(t,x) &=   \overline{u}_\theta(t,x)  \et + \overline{u}_r( x)  \er +  \overline{u}_z( x)  \ez .
\end{aligned}
\end{equation}

Among the cylindrical components, only $\overline {u}_{  \theta}$ evolves with time. The stationary $rz$-components $\overline {u}_{  r}$ and $\overline {u}_{  z}$   are defined as follows. 
\begin{equation}\label{eq:def_overline_u_r_u_z}
\begin{aligned}
\overline {u}_{  r} (   z ,   r  ) & := - \ep^2 \mu^{ \frac{2}{p}  -s } \nu^{ \frac{1}{p}   } f '  (  \mu  \rho  )  \p_z \rho    \\
\overline {u}_{ z} (   z ,   r  ) &:=  \overline {u}_{z, p }  +  \overline {u}_{ z,c } 
\end{aligned}
\end{equation} 
where $\overline {u}_{ z, p }  $ is the principal part of $\ez$-component
\begin{equation}\label{eq:def_overline_u_zp}
\overline {u}_{ z, p }  : = \ep^2 \mu^{\frac{2}{p}  -s } \nu^{ \frac{1}{p}   }   f  '  (  \mu \rho  )   \p_r \rho 
\end{equation}
and $ \overline {u}_{ z, c } $ is a divergence corrector defined by 
\begin{equation}\label{eq:def_overline_u_zc}
\overline {u}_{ z, c }   : = \ep^2 \mu^{-1 +\frac{2}{p}  -s } \nu^{ \frac{1}{p}   }  \frac{f       (  \mu \rho  ) }{r}   .  
\end{equation}

Since  $\overline u  $ is an axisymmetric vector field, we check that it indeed has zero divergence:
\begin{equation}\label{eq:overline_u_zero_div}
\begin{aligned} 
\D \overline u   
&=  \p_r \overline {u}_{ r }   + \frac{ \overline {u}_{ r } }{r} + \p_z \overline {u}_{ z, p }  +   \p_z \overline {u}_{ z, c }     \\ 
& = 0 .
\end{aligned}
\end{equation}

The key point is that the vector field $ \RR^2 \ni ( \overline {u}_{  r} , \overline {u}_{  z, p}        ) = C_{\ep,\mu} \nabla^{\perp} (f (\mu \rho) )  $ is a radial vortex  and hence a stationary solution to the 2D Euler equation on  the $rz$-plane with compact support. More precisely, 

\begin{lemma}\label{lemma:overline_u_2D_Euler}
There exists a  pressure $\overline p (r,z)$, smooth in $(r,z) \in   \RR^+\times \RR$ and constant outside the support of $( \overline {u}_{  r} , \overline {u}_{  z,p}        )$, such that
\begin{equation}\label{eq:stationary_2Deuler}
\begin{cases}   
\big(  \overline {u}_{  r}  \p_r    +  \overline {u}_{ z, p}  \p_z  \big) \overline {u}_{  r}  + \p_r \overline p  = 0 & \\
\big(  \overline {u}_{  r}  \p_r    +  \overline {u}_{ z, p}  \p_z  \big)  \overline {u}_{ z, p}  +  \p_z  \overline p     = 0&\\
\p_r  \overline {u}_{  r}  +  \p_z \overline {u}_{ z, p} = 0 &
\end{cases}
\quad\text{in $(r,z) \in  \RR^+\times \RR$} .
\end{equation}

In addition, for any smooth $G(r,z)$ there holds
\begin{equation}\label{eq:stationary_2Deuler_b}
\big(  \overline {u}_{  r}  \p_r    +  \overline {u}_{ z, p}  \p_z  \big) G = \ep^2 \mu^{ -s    +\frac{2}{p } } \nu^{ \frac{1}{p}}  \frac{f'(\rho)}{\rho}   \p_\varphi G .
\end{equation}
\end{lemma}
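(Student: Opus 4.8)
The plan is to exploit the fact that $(\overline u_r, \overline u_{z,p})$ is, up to the scalar prefactor $C_{\ep,\mu} := \ep^2 \mu^{2/p - s}\nu^{1/p}$, the $rz$-plane gradient-perpendicular of the radial stream function $\psi(\rho) := f(\mu\rho)/\mu$. Indeed, writing $\nabla^\perp_{r,z}$ for $(-\p_z, \p_r)$ in the $rz$-plane, one has $\nabla^\perp_{r,z}\psi = (-f'(\mu\rho)\p_z\rho,\ f'(\mu\rho)\p_r\rho)$, so that $(\overline u_r, \overline u_{z,p}) = C_{\ep,\mu}\,\nabla^\perp_{r,z}\psi$. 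First I would record the elementary derivatives of the shifted polar coordinates: from \eqref{eq:def_rho_varphi} one computes $\p_r\rho = \cos\varphi$, $\p_z\rho = \sin\varphi$, $\p_r\varphi = -\sin\varphi/\rho$, $\p_z\varphi = \cos\varphi/\rho$, hence the Cauchy–Riemann-type identities $\p_r\rho = \rho\,\p_z\varphi$ and $\p_z\rho = -\rho\,\p_r\varphi$. The divergence-free identity (the third line of \eqref{eq:stationary_2Deuler}) is then immediate since $\p_r\p_z - \p_z\p_r$ annihilates any $C^2$ scalar.

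For the advective identity \eqref{eq:stationary_2Deuler_b}, the key observation is that the transport operator $\overline u_r\p_r + \overline u_{z,p}\p_z = C_{\ep,\mu}(\nabla^\perp_{r,z}\psi)\cdot\nabla_{r,z}$ is, geometrically, differentiation along the level sets of $\psi$, which — since $\psi = \psi(\rho)$ is radial in the shifted polar frame — are exactly the circles $\{\rho = \text{const}\}$. Concretely, changing variables $(r,z)\mapsto(\rho,\varphi)$ and using the chain rule together with the derivative formulas above, one gets
\begin{equation}\label{eq:transport_in_polar}
\overline u_r\p_r + \overline u_{z,p}\p_z = C_{\ep,\mu}\Big(f'(\mu\rho)\cos\varphi\,\p_r - f'(\mu\rho)\sin\varphi\,\p_z\Big)\cdot(\text{expressed via }\p_\rho,\p_\varphi),
\end{equation}
and a direct computation collapses this to $C_{\ep,\mu}\,\dfrac{f'(\mu\rho)}{\rho}\,\p_\varphi = \ep^2\mu^{-s+2/p}\nu^{1/p}\,\dfrac{f'(\rho)}{\rho}\,\p_\varphi$ after renaming $\mu\rho \rightsquigarrow \rho$ as in the paper's profile convention; applying this to any smooth $G$ gives \eqref{eq:stationary_2Deuler_b}. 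The only mild care needed is that $f'(\mu\rho)/\rho$ is smooth across $\rho=0$ because $\Supp f \subset \{1/2 \le \rho \le 2\}$ keeps us away from the axis, so no singularity arises there.

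Finally, to produce the pressure $\overline p$ for the first two lines of \eqref{eq:stationary_2Deuler}, I would apply \eqref{eq:stationary_2Deuler_b} with $G = \overline u_r$ and $G = \overline u_{z,p}$ and observe that both resulting right-hand sides are gradients: since $\overline u_r$ and $\overline u_{z,p}$ depend on $(r,z)$ through $\rho$ and $\varphi$ in the specific combination coming from $\nabla^\perp_{r,z}\psi(\rho)$, the vector $\big((\overline u\cdot\nabla)\overline u_r,\ (\overline u\cdot\nabla)\overline u_{z,p}\big)$ equals $-\tfrac12\nabla_{r,z}\big(|\nabla_{r,z}\psi|^2\cdot C_{\ep,\mu}^2\big)$ plus a curl-free remainder — more cleanly, one uses the standard 2D Euler fact that a radial vorticity profile gives a radial stream function, so the nonlinear term is balanced by $\p_{r,z}\overline p$ with $\overline p$ a radial function of $\rho$ determined by the ODE $\p_\rho\overline p = C_{\ep,\mu}^2 f'(\mu\rho)^2\mu/\rho \cdot(\text{centrifugal term})$; integrating this ODE in $\rho$ and noting $f'$ is compactly supported makes $\overline p$ smooth and constant for $\rho$ large, in particular constant outside $\Supp(\overline u_r,\overline u_{z,p})$. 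The main obstacle I anticipate is purely bookkeeping: keeping the shifted-polar chain rule and the factor-of-$\mu$ rescaling consistent so that \eqref{eq:stationary_2Deuler_b} comes out with exactly the stated prefactor $\ep^2\mu^{-s+2/p}\nu^{1/p}$; there is no analytic difficulty, only the risk of a misplaced power of $\mu$ or $\rho$.
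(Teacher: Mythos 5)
Your proposal is correct and follows essentially the same route as the paper: identifying $(\overline u_r,\overline u_{z,p})$ as $C_{\ep,\mu}\nabla^\perp_{r,z}$ of a radial stream function, so that the vorticity is radial and the standard radial-vortex (centrifugal balance) argument produces a pressure constant off the support, while \eqref{eq:stationary_2Deuler_b} reduces to the identity $-\p_z\rho\,\p_r+\p_r\rho\,\p_z=\rho^{-1}\p_\varphi$. The only caveat is bookkeeping you already flagged yourself (e.g.\ the stray factor of $\mu$ in your pressure ODE), which does not affect the argument.
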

\begin{proof}
Since $( \overline {u}_{  r} , \overline {u}_{ z, p}        )  = C_{\ep ,\mu}  \nabla_{r,z }^\perp \left(f  (\mu \rho ) \right)$, the vorticity $\overline {w}  = \nabla_{r,z }^\perp \cdot ( \overline {u}_{  r} , \overline {u}_{ z, p}        )  = C_{\ep ,\mu} \Delta_{r,z} \left(f  (\mu \rho ) \right) $ and hence the vorticity is compactly supported and radial in $\rho$.

The second \eqref{eq:stationary_2Deuler_b} follows from $-\p_z \rho \p_r + \p_r \rho \p_z= \rho^{-1} \p_\varphi $.
\end{proof}

\subsection{Definition of \texorpdfstring{$\overline{u}_\theta$}{utheta}}
Motivated by \eqref{eq:stationary_2Deuler_b}, we give the construction of the $\theta$ component, the one that manifests the desired norm inflation.

Define  the $\theta$-component  $ \overline{u}_\theta:\RR^+ \times \RR^3 \to \RR  $ as the unique smooth solution of the free transport equation on $( r,z) \in \RR^+ \times \RR  $
\begin{equation}\label{eq:def_overline_u_theta}
\begin{cases}
\p_t \overline{u}_\theta +   (  \overline {u}_{  r} \p_r  +   \overline {u}_{ z, p} \p_z   )   \overline{u}_\theta = 0 &\\
\overline{u}_\theta  |_{t = 0} =   {u}_{0,\theta} 
\end{cases}
\text{for $(t, r,z) \in \RR^+ \times \RR^+ \times \RR  $}
\end{equation}
where the initial data
\begin{equation}\label{eq:def_overline_u_theta0}
{u}_{0,\theta} := 
\begin{cases}
\ep^{-2 + k_0 N } \mu^{\frac{2}{p } -  s - k_0  } \nu^{ \frac{1}{p}}  \p_\rho^{k_0 }   \left[   \sin(\varphi +  \zeta(\rho) )  g(\mu \rho )\right]  &\quad \text{if $s<0$} \\
\ep^2 \mu^{\frac{2}{p }- s    } \nu^{ \frac{1}{p}}    \sin(\varphi    )  g(\mu \rho ) & \quad \text{if $s > 0$} .
\end{cases}
\end{equation}
where $\p_\rho^{k_0 }  $ is $k_0$ times differentiation in $\rho$  and $  \zeta(\rho): = t^*   \ep^2 \mu^{ -s    +\frac{2}{p } } \nu^{ \frac{1}{p}} \rho^{-1} = \ep^{-N} \mu^{-1} \rho^{-1}$.

By   the design of $ {u}_{0,\theta}  $ and Lemma \ref{lemma:overline_u_2D_Euler},  we can express $ \overline{u}_\theta $ explicitly 
\begin{equation}\label{eq:def_overline_u_theta_1}
\overline{u}_\theta (t ) =   
\ep^2 \mu^{\frac{2}{p }- s    } \nu^{ \frac{1}{p}}    \sin(\varphi  - t   \ep^2 \mu^{ -s  +\frac{2}{p } } \nu^{ \frac{1}{p}} \rho^{-1}   )  g(\mu \rho )     \quad \text{if $s > 0$} .
\end{equation} 

A similar but more complicated expression holds for $s<0$ where one see that the phase angle  $\varphi +  \zeta(\rho)  $ gets unmixed at $t =t^*$.

\subsection{Properties of the approximate solution}

We start with immediate consequences of the definitions \eqref{eq:def_overline_u}--\eqref{eq:def_overline_u_zc} and derive useful Sobolev estimates for the approximate solution.

By their definitions, the objects $\overline{u}_{ \theta } , \overline{u}_{  r } , \overline{u}_{  z }   $ are supported in the ring $\{ x \in \RR^3: \rho \leq 2 \mu^{-1}\}$.  Therefore, the vector field $\overline{u} : \RR^+ \times \RR^3  \to \RR^3$ defined by  \eqref{eq:def_overline_u}--\eqref{eq:def_overline_u_zc} is smooth, divergence-free, and compactly supported  with a support of size $\sim \mu^{-2} \nu^{-1}$.

We derive useful Sobolev estimates for $\overline{u} $ of positive order in the below lemma. All Sobolev or Lebesgue norms in this paper are taken in $\RR^3$ and hence we will not spell out the spatial domain.

\begin{lemma}\label{lemma:estimates_overline_u}
For any $k \in \NN$ and $1\leq {q}\leq \infty$, $\overline{u} $ satisfies   the estimates
\begin{equation}\label{eq:lemma_u_0_1}
|\overline{u}  |_{ L^\infty( [0,t^*]; W^{k,{q}})  } \leq C_{ \ep, k,{q} }  \mu^{ k -s  }  \Big( \mu^{\frac{2}{p } - \frac{2}{{q} } } \nu^{\frac{1}{p   } - \frac{1}{{q} } }  \Big) ,
\end{equation}
where the constant $C_{\ep, k, {q}} > 0 $ is independent of $  \mu$ and $\nu$.

\end{lemma}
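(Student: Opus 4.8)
The plan is to estimate each cylindrical component of $\overline{u}$ separately and combine. All three components $\overline{u}_\theta, \overline{u}_r, \overline{u}_z$ are built from the profiles $f, g$ composed with $\mu\rho$, multiplied by constants and trigonometric functions of $\varphi$ (and, for $s<0$, differentiated $k_0$ times in $\rho$), and supported in the ring $\{\rho \lesssim \mu^{-1}\}$ sitting at radial distance $\sim \nu^{-1}$ from the axis. First I would treat the stationary pieces $\overline{u}_r$, $\overline{u}_{z,p}$, $\overline{u}_{z,c}$. For these, I would use the anisotropic chain rule: a function of the form $h(\mu\rho)$ multiplied by bounded angular factors has $|\partial_\rho^i \partial_\varphi^j (\cdot)| \lesssim \mu^i$ on the support (where $\rho \sim \mu^{-1}$ so factors like $f'(\mu\rho)$, $\partial_z\rho$, $\partial_r\rho$ are $O(1)$), and then pass to the full Euclidean gradient via \eqref{eq:diff_rho_varphi}, which on the support $\rho \sim \mu^{-1}$ gives $|\nabla^k(\cdot)| \lesssim \mu^k \times (\text{sup of the function})$. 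The prefactor from \eqref{eq:def_overline_u_r_u_z} is $\ep^2 \mu^{2/p - s}\nu^{1/p}$, so $|\nabla^k \overline{u}_r|_{L^\infty} \lesssim_\ep \mu^{k} \mu^{2/p - s}\nu^{1/p}$; the divergence corrector $\overline{u}_{z,c}$ carries an extra $\mu^{-1}/r \sim \mu^{-1}\nu$, which by \eqref{eq:def_nu_lambda} is $\mu^{-b} \le 1$, so it is lower order and causes no trouble.

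Next I would pass from $L^\infty$ bounds to $L^q$ bounds by multiplying by the measure of the support to the power $1/q$: the support has Euclidean volume $\sim \mu^{-2}\nu^{-1}$ (two ``short'' directions of size $\mu^{-1}$ in the $rz$-plane times the angular direction of length $\sim r \sim \nu^{-1}$), so $|\cdot|_{L^q} \lesssim |\cdot|_{L^\infty} (\mu^{-2}\nu^{-1})^{1/q}$. This turns the $L^\infty$ estimate into $\mu^{k-s}\mu^{2/p}\nu^{1/p}\cdot\mu^{-2/q}\nu^{-1/q} = \mu^{k-s}\mu^{2/p - 2/q}\nu^{1/p - 1/q}$, exactly the claimed form. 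For the $W^{k,q}$ norm I sum over $0 \le i \le k$; since $\mu \ge 1$, the top-order term $k$ dominates and the sum is $\lesssim_k$ the $k$-th order term, giving \eqref{eq:lemma_u_0_1}.

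The remaining — and slightly more delicate — piece is $\overline{u}_\theta(t)$. For $s>0$ I would use the explicit formula \eqref{eq:def_overline_u_theta_1}: it is $\ep^2\mu^{2/p-s}\nu^{1/p}$ times $\sin(\varphi - c(t)\rho^{-1})g(\mu\rho)$ with $c(t) = t\,\ep^2\mu^{2/p-s}\nu^{1/p} \le t^*\ep^2\mu^{2/p-s}\nu^{1/p} = \ep^{-N}\mu^{-1}$ for $t \le t^*$. The only subtlety is that each $\partial_\varphi$ or $\partial_\rho$ hitting the oscillatory phase produces a factor; $\partial_\varphi$ gives $O(1)$, while $\partial_\rho$ of the phase gives $c(t)\rho^{-2} \sim \ep^{-N}\mu^{-1}\cdot \mu^2 = \ep^{-N}\mu$ — which is consistent with ``one derivative costs $\mu$'' up to the $\ep$-power that is absorbed into $C_{\ep,k,q}$. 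So again $|\partial_\rho^i\partial_\varphi^j \overline{u}_\theta| \lesssim_{\ep,k} \mu^{i}\cdot \ep^2\mu^{2/p-s}\nu^{1/p}$ on the support, and the same volume and \eqref{eq:diff_rho_varphi} argument closes the case. For $s<0$, $\overline{u}_\theta$ is the $t$-transport of $u_{0,\theta}$ along $(\overline{u}_r, \overline{u}_{z,p})$, which is a volume-preserving flow; the simplest route is to note that Sobolev norms of the transported function are controlled by those of the initial data times $\exp(C t\,|\nabla(\overline{u}_r,\overline{u}_{z,p})|_{L^\infty})$, or more directly to use the analog of \eqref{eq:def_overline_u_theta_1} (the unmixing formula alluded to after it): the $k_0$ derivatives in $\rho$ in \eqref{eq:def_overline_u_theta0} contribute $\mu^{-k_0}$ to the prefactor and $\mu^{k_0}$ when re-differentiated (again with $\ep$-powers absorbed), so the net scaling in $\mu,\nu$ is unchanged. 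The main obstacle, modest as it is, is the careful bookkeeping of the $\rho^{-1}$ and $\rho^{-2}$ factors coming from $\partial_\rho$ of the phase $\zeta(\rho) = \ep^{-N}\mu^{-1}\rho^{-1}$ and of $\partial_r\rho, \partial_z\rho$, and checking that on the support $\rho \sim \mu^{-1}$ every such factor is $\lesssim_{\ep} \mu^{(\text{number of }\partial_\rho\text{'s})}$ so that the clean scaling $\mu^{k-s}\mu^{2/p-2/q}\nu^{1/p-1/q}$ survives with all $\ep$-dependence swept into the constant.
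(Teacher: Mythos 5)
Your proposal is correct and follows essentially the same route as the paper: component-wise $L^\infty$ bounds via the anisotropic $\partial_\rho,\partial_\varphi$ differentiation (each derivative costing at most $C_\ep\mu$ on the support $\rho\sim\mu^{-1}$, with the phase $\zeta$ and the corrector $\overline{u}_{z,c}$ contributing only factors absorbed into $C_\ep$ or the harmless $\mu^{-1}\nu\le 1$), followed by conversion to $L^q$ using the support volume $\sim\mu^{-2}\nu^{-1}$. The only bookkeeping item you leave implicit is that $\nabla^k$ also falls on the unit vectors $\er,\et$, giving $|\nabla^j\er|+|\nabla^j\et|\lesssim r^{-j}\lesssim\nu^j\le\mu^j$ on the support, which is exactly how the paper closes the estimate and fits within your ``one derivative costs $\mu$'' scheme.
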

\begin{proof}
Since $\overline{u}$ is supported on a set of size $\sim\mu^{-2}\nu^{-1}$, it suffices to prove that for $   k \in \NN$,
\begin{equation}
| \nabla^{k} \overline{u}  |_{L^\infty_{t,x}([0,t^*] \times \RR^3)   }  \leq     C_{ \ep, k }   \mu^{ k -s  } \mu^{ \frac{ 2}{p}  } \nu^{\frac{1}{p}   }   .   
\end{equation}
and we analyze each component separately.

\noindent
\textbf{\underline{Part 1: Estimates of $\overline{u}_{  r } \er$ and $\overline{u}_{  z } \ez$}}

We drop the time variable as these two vector fields are stationary.

Since the constants in the estimates are allowed to depend on $\ep$, by the definitions \eqref{eq:def_overline_u_r_u_z}, \eqref{eq:def_overline_u_zc}, and  \eqref{eq:def_overline_u_zp} and using \eqref{eq:diff_rho_varphi} for differentiation  in $\rho\phi$-coordinates, we  obtain  
\begin{equation}\label{eq:est_u_components_r_zp}
\begin{aligned}
|  \nabla^{k}  \overline{u}_{r }  |_{L^{\infty } } & \leq  C_{\ep,k } \mu^{ k -s }    \Big( \mu^{\frac{2}{p }   } \nu^{\frac{1}{p}  }  \Big) \\
|  \nabla^{k} \overline{u}_{z,p } |_{L^{\infty } } & \leq  C_{\ep,k } \mu^{ k -s }    \Big( \mu^{\frac{2}{p }   } \nu^{\frac{1}{p}  }  \Big)  
\end{aligned}
\end{equation} 
and
\begin{equation}\label{eq:est_u_components_zc}
\begin{aligned}
|  \nabla^{k} \overline{u}_{ z,c }  |_{L^{\infty } } & \leq  C_{\ep,k }  (\mu^{-1} \nu)   \mu^{ k -s }   \Big( \mu^{\frac{2}{p }   } \nu^{\frac{1}{p}  }  \Big)  .
\end{aligned}
\end{equation} 

Then the estimates for   $\overline{u}_{  r } \er$ and $\overline{u}_{  z } \ez$   follow from \eqref{eq:est_u_components_r_zp}, \eqref{eq:est_u_components_zc}, and $|\nabla^{k} \er| \lesssim r^{-k} \lesssim \nu^{k}$ on the support of $\overline{u}$ thanks to $\nu \ll \mu$.

\noindent
\textbf{\underline{Part 2: Estimates of $\overline{u}_\theta \et$ }}

Since $ \overline{ u}_{ \theta}$ is   transported by $\overline{ u}_{ r}$  and $\overline{ u}_{ z,p} $, in both cases $s>0$ and $s<0$ by Lemma \ref{lemma:overline_u_2D_Euler} we can write synthetically the  $\overline{ u}_{ \theta} $ as
\begin{equation}\label{eq:aux_lemma_error_5}
\overline{ u}_{ \theta}  =   \mu^{\frac{2}{p }-  s   } \nu^{ \frac{1}{p}} \sum_{i}  a_i F_i(\mu \rho )  G_i (\varphi + b_{i}(t) \rho^{-1} \mu^{-1 } )   
\end{equation}
where $F_i  $ and $G_i$ are finitely many smooth profiles that can be computed explicitly and $a_i, b_i(t) \in \RR $ are $\ep$ dependent but uniformly bounded on $[0,t^*]$ in $\nu,\mu$. Theses claims follow  from the definition of $t^*$ in \eqref{eq:def_critical_t*}.

When $k  =0$, the estimates follow trivially. For integers $k\geq 1$, we need to bound the factor of each (spatial) differentiation on $\overline{ u}_{ \theta}$.

By the construction of $ \overline{ u}_{ \theta} $, all $F_i$ in \eqref{eq:aux_lemma_error_5} are supported in $\rho \in [1 , \frac{3}{2}]$, so by differentiating in $\rho\phi$ we observe from \eqref{eq:diff_rho_varphi} that the maximum factor for each differentiation is $ C_\ep   \mu $, namely
\begin{equation}\label{eq:aux_lemma_error_7}
| \nabla^{k}(  \overline{u}_{   \theta }  )|_{L^\infty([0,t^*] \times \RR^3 )} \leq   C_{  \ep, k }   \mu^{ k -s  }  \mu^{\frac{2}{p}   } \nu^{ \frac{1}{p}   } .
\end{equation}

It follows from  \eqref{eq:aux_lemma_error_7} and $ |\nabla^k \et| \lesssim \nu^{k}$ on the support of $ \overline{u}_{   \theta } $ that
\begin{equation}\label{eq:aux_lemma_error_8}
| \nabla^{k}(  \overline{u}_{   \theta }  \et )|_{L^\infty([0,t^*] \times \RR^3 )} \leq   C_{  \ep, k }   \mu^{ k -s  }  \mu^{\frac{2}{p}   } \nu^{ \frac{1}{p}   } .
\end{equation}

\end{proof}

\subsection{Proof of Proposition \ref{prop:approximate}}
\begin{proof}[Proof of Proposition \ref{prop:approximate}] 

Since the estimate \eqref{eq:prop_error_1} have been shown in Lemma \ref{lemma:estimates_overline_u}, it only remains to show \eqref{eq:prop_error_2} and \eqref{eq:prop_error_3}, the construction of $ \overline{E}$ and its estimates.

\noindent
\textbf{\underline{Step 1: Definition of $\overline{p}$ and $\overline{E}$}}

The construction of the error field $\overline{E} $   splits into two parts
\begin{equation}\label{eq:aux_proof_error_1}
\overline{E} = \overline{E}_{e} +  \overline{E}_{v}
\end{equation}
where $\overline{E}_{e}  $ is the ``Eulerian'' error  defined via cylindrical components  $\overline{E}_{e} = \overline{E}_{e,\theta } \et  +  \overline{E}_{e, r } \er  + \overline{E}_{e, z }  \ez $ by  
\begin{align}\label{eq:aux_proof_error_2}
\overline{E}_{e,\theta }  & = \overline{u}_{z, c} \p_z \overline{u}_{  \theta } +  \frac{1}{r} \overline{u}_{  \theta } \overline{u}_{   r}   \\
\overline{E}_{e,r } & =  \overline{u}_{z,c} \p_z  \overline{u}_{   r }- \frac{1}{r} \overline{u}_{  \theta }^2  \\
\overline{E}_{e,z } &= \overline{u}_{  r} \p_r \overline{u}_{z,c} + \overline{u}_{   z,p } \p_z  \overline{u}_{z, c}+     \overline{u}_{z, c} \p_z  \overline{u}_{z }
\end{align}
and $\overline{E}_{v}$ is the ``viscous'' error given by
\begin{equation}\label{eq:aux_proof_error_2b}
\overline{E}_{v} = -\Delta \overline{u} .
\end{equation}

We define the pressure $ \overline{p}(r,z)  $   as \eqref{eq:stationary_2Deuler}, which  is also smooth as a function $\RR^3 \to \RR$.

Note that $\overline{E}_{e}  $ corresponds to lower order nonlinear errors arising between the full 3D Euler dynamics and the transport plus  2D Euler dynamics of the approximate solution, whereas  $\overline{E}_{v}$ is introduced when neglecting the viscous dissipation.

By  Lemma \ref{lemma:overline_u_2D_Euler} and \eqref{eq:def_overline_u_theta}, $\overline{ u}$ satisfies the   3D Euler equations with $\overline{E}_{e }  $ as the error on the right:
\begin{equation}\label{eq:aux_proof_error_3}
\p_t  \overline{u}  + \overline{u}\cdot \nabla \overline{u}   + \nabla \overline{p} = \overline{E}_e
\end{equation}
which can be seen in cylindrical coordinates, namely
\begin{equation}\label{eq:aux_proof_error_4}
\begin{cases}
\p_t \overline{u}_{  \theta } + \overline{u}_{  r} \p_r \overline{u}_{  \theta } + \overline{u}_{  z} \p_z \overline{u}_{  \theta }+ \frac{1}{r} \overline{u}_{  \theta }\overline{u}_{  r}  = \overline{E}_{e,\theta } & \\
\p_t \overline{u}_{  r } + \overline{u}_{  r} \p_r \overline{u}_{  r } + \overline{u}_{  z} \p_z \overline{u}_{  r } - \frac{1}{r} \overline{u}_{  \theta }^2 + \p_r \overline{p} = \overline{E}_{e, r  } & \\
\p_t \overline{u}_{  z} + \overline{u}_{  r} \p_r \overline{u}_{ z} + \overline{u}_{  z} \p_z \overline{u}_{  z}  +  \p_z   \overline{p}  = \overline{E}_{e, z } .&  
\end{cases}
\end{equation}

It follows from  \eqref{eq:aux_proof_error_3} that $\overline{ u}$ satisfies the 3D Navier-Stokes equations with $\overline{E} = \overline{E}_{e }  + \overline{E}_{v }$ as the error, which proves \eqref{eq:prop_error_2}.

\noindent
\textbf{\underline{Step 2: Estimates of  $\overline{E}$}}

It remains to show the estimate for $\overline{E}   $. By the considerations in Lemma \ref{lemma:estimates_overline_u}, it suffices to only consider the case $k =0$ for $ | \overline{E} |_{L^\infty } $, that is
\begin{equation}\label{eq:aux_proof_error_5}
| \overline{E} |_{L^\infty([0,t^*] \times \RR^3 ) }  \lesssim_\ep \mu^{-1}\nu  \big(\mu^{ 1-s+ \frac{ 2}{p}} \nu^{\frac{ 1}{p} } \big)  \big( \mu^{-s + \frac{ 2}{p} } \nu^{\frac{ 1}{p} } \big). 
\end{equation}

For the viscous error $\overline{E}_v $ of \eqref{eq:aux_proof_error_2b}, we apply Lemma \ref{lemma:estimates_overline_u}  
\begin{equation}\label{eq:aux_proof_error_6}
| \overline{E}_v |_{L^\infty([0,t^*] \times \RR^3 ) } \leq   | \nabla^2 \overline{u}  |_{L^\infty ([0,t^*] \times \RR^3 ) } \leq C_{ \ep  }  \mu^{ 2  -s  }  \Big( \mu^{\frac{2}{p }   } \nu^{\frac{1}{p   }   }  \Big) .
\end{equation}
Here we notice that
\begin{equation}\label{eq:aux_proof_error_6aab}
\mu^{ 2  -s  }  \Big( \mu^{\frac{2}{p }   } \nu^{\frac{1}{p   }   }  \Big)  = \text{right hand side of \eqref{eq:aux_proof_error_5}} \times  \mu^{1+ s - \frac{ 3}{p} }\big( \mu^{-1} \nu \big)^{-1 - \frac{ 1}{p} } .  
\end{equation}
By \eqref{eq:def_small_b}, the last factor in \eqref{eq:aux_proof_error_6aab} satisfies
$$
\mu^{1+ s - \frac{ 3}{p} }\big( \mu^{-1} \nu \big)^{-1 - \frac{ 1}{p} } \leq  \mu^{-10 b} \mu^{ 2 b} \leq \mu^{-1}\nu ,
$$
and hence from \eqref{eq:aux_proof_error_6} and \eqref{eq:aux_proof_error_6aab} we conclude that
\begin{equation}\label{eq:aux_proof_error_6a}
| \overline{E}_v |_{L^\infty([0,t^*] \times \RR^3 ) } \leq C_\ep \mu^{-1}\nu  \big(\mu^{ 1-s+ \frac{ 2}{p}} \nu^{\frac{ 1}{p} } \big)  \big( \mu^{-s + \frac{ 2}{p} } \nu^{\frac{ 1}{p} } \big). 
\end{equation}

For the Eulerian error $\overline{E}_e $ given by \eqref{eq:aux_proof_error_2}, since   estimating  $\overline{E}_{e,\theta}$ and $\overline{E}_{e,z}$ is very  similar, we only demonstrate the estimate of $\overline{E}_{e,r} $.

For $ \overline{E}_{e,r}$, we   use the H\"older inequality and   the fact that $r \sim \nu^{-1}$ on the support of $\overline{u}  $, 
\begin{equation}\label{eq:aux_proof_error_7}
\begin{aligned}
| \overline{E}_{e,r} |_{L^\infty([0,t^*] \times \RR^3 ) }&  \leq |  \overline{u}_{z,c}|_{L^\infty } | \p_z  \overline{u}_{  r }  |_{L^\infty  } +    | \frac{1}{r}  \overline{u}_{    \theta }   |_{L^\infty }   |\overline{u}_{   \theta }   |_{L^\infty  }\\
&  \lesssim |  \overline{u}_{z, c}|_{L^\infty } |\nabla  \overline{u}_{  r }  |_{L^\infty  }+ \nu   |   \overline{u}_{    \theta }   |_{L^\infty }   |\overline{u}_{   \theta }   |_{L^\infty  }.
\end{aligned}
\end{equation}
By   \eqref{eq:est_u_components_zc}, and  \eqref{eq:prop_error_1} proved in Part 1,
\begin{equation}\label{eq:aux_proof_error_8}
\begin{aligned}
| \overline{E}_{e,r} |_{L^\infty([0,t^*] \times \RR^3 ) }&      \lesssim_\ep \big( \mu^{-1}\nu  \mu^{ -s} \mu^{\frac{ 2}{p} } \nu^{\frac{ 1}{p} } \big)  \big( \mu^{1-s} \mu^\frac{ 2}{p}  \nu^{ \frac{1 }{p}}  \big) + \nu \big( \mu^{-s} \mu^\frac{ 2}{p}  \nu^{ \frac{1 }{p}}  \big)^2   \\
&\lesssim_\ep   \mu^{-1}\nu  \big(\mu^{ 1-s+ \frac{ 2}{p}} \nu^{\frac{ 1}{p} } \big)  \big( \mu^{-s + \frac{ 2}{p} } \nu^{\frac{ 1}{p} } \big).  
\end{aligned}
\end{equation}
Using the same strategy, one can show that both $\overline{E}_{e,\theta}$ and $\overline{E}_{e,z}$ satisfy the same estimate as \eqref{eq:aux_proof_error_8}.

Combining \eqref{eq:aux_proof_error_6a} and \eqref{eq:aux_proof_error_8} gives \eqref{eq:aux_proof_error_5}.

\end{proof}

\section{Growth of the approximate solution: \texorpdfstring{$s>0$}{s>0}}\label{sec:overline_u_inflation_s>0}
In this and the next sections, we show the approximate solution $\overline{ u}$ constructed in the previous section develop the desired norm inflation on $[0,t^*]$.

Due to   differences in the growth mechanisms, we separate the two cases $s>0$ and $s<0$---this section focuses on the case $s>0$  whereas $s<0$ is more involved and will be treated in the next section.

The main result of this section is summarized in the following.

\begin{proposition}[Norm inflation for $\overline{u}$: $s>0$]\label{prop:approximate_inflation_s>0}
Let $s,p $ be given as in Theorem \ref{thm:Besov} such that $s> 0$. The approximate solution $\overline{u}$ constructed in Proposition \ref{prop:approximate} satisfies:  
\begin{enumerate}
\item  \textbf{Small initial data:} The initial data $\overline{u}_0  $ is small in $ \dot B^{s}_{p,1} (\RR^3)$:
\begin{equation}\label{eq:prop:approximate_inflation_s>0_1}
|  \overline{u}_0    |_{  \dot B^{s}_{p,1}} \lesssim  \ep^{   2 }.
\end{equation}

\item \textbf{Norm inflation at critical time:} There exists $\ep_0 = \ep_0(s,p )>0$ such that for all $ 0< \ep \leq \ep_0$ at the critical time $t^*= t^*(\ep,\mu)>0$ (defined in \eqref{eq:def_critical_t*}), $\overline{u}$ exhibits $\dot B^{s}_{p, \infty }(\RR^3)$   inflation:
\begin{equation}\label{eq:prop:approximate_inflation_s>0_2}
|  \overline{u} (t^*)   |_{\dot B^{s}_{p,\infty } } \gtrsim  \ep^{ - 2 } .
\end{equation}

\end{enumerate}
Both implicit   constants are independent of $\ep$ or $\mu,\nu$.
\end{proposition}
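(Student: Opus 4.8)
## Proof proposal for Proposition 3.8

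\textbf{Overall strategy.} The quantity that carries the norm inflation is the swirl component $\overline{u}_\theta \et$, and the mechanism is purely kinematic: the phase $\varphi - t\,\ep^2\mu^{-s+2/p}\nu^{1/p}\rho^{-1}$ in the explicit formula \eqref{eq:def_overline_u_theta_1} acquires, over the time interval $[0,t^*]$, a $\rho$-derivative of size $\sim t^*\,\ep^2\mu^{-s+2/p}\nu^{1/p}\rho^{-2}\sim \ep^{-N}\mu$ on the support $\rho\sim\mu^{-1}$. So at $t=t^*$ the swirl oscillates at an effective frequency $\sim \ep^{-N}\mu^2\nu$ in the radial direction (recall $\rho$ lives at scale $\mu^{-1}$ and the physical $r$-frequency gets an extra $\nu$ from the $\rho\mapsto\mu\rho$ scaling combined with the chain rule), while at $t=0$ it merely oscillates at frequency $\sim\mu^2\nu$. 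The plan is: (i) compute the Besov norm of the initial data via the interpolation Lemma \ref{lemma:besov_interpolation} and the size/support information already recorded in Lemma \ref{lemma:estimates_overline_u}; (ii) bound $|\overline{u}(t^*)|_{\dot B^s_{p,\infty}}$ from below by testing a single Littlewood–Paley block, or equivalently by a duality pairing \eqref{eq:besov_duality} against a well-chosen Schwartz function concentrated at the inflated frequency.

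\textbf{Step 1: smallness of the initial data.} For $s>0$ the initial swirl is $u_{0,\theta}=\ep^2\mu^{2/p-s}\nu^{1/p}\sin(\varphi)g(\mu\rho)$, a bump of amplitude $\ep^2\mu^{2/p-s}\nu^{1/p}$, oscillating at a single frequency of size $\sim\mu$ in $\rho$ (hence $\lesssim\mu$ in physical space after including the $\et$, $\er$ factors, whose derivatives cost only $\nu\ll\mu$), supported on a set of measure $\sim\mu^{-2}\nu^{-1}$. Thus for any integer $k\geq 0$, $|u_0|_{\dot W^{k,p}}\lesssim_{k}\ep^2\mu^{2/p-s}\nu^{1/p}\cdot\mu^k\cdot(\mu^{-2}\nu^{-1})^{1/p}=\ep^2\mu^{k-s}$, using $\nu=\mu^{1-b}$ is irrelevant here since the $\nu$-powers cancel. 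Picking two integers $k_1<s<k_2$ and interpolating via $\dot B^{k_1}_{p,\infty}\supset\dot W^{k_1,p}$, $\dot B^{k_2}_{p,\infty}\supset\dot W^{k_2,p}$ and Lemma \ref{lemma:besov_interpolation} gives $|u_0|_{\dot B^s_{p,1}}\lesssim \ep^2(\mu^{k_1-s})^{\alpha}(\mu^{k_2-s})^{1-\alpha}=\ep^2$, which is \eqref{eq:prop:approximate_inflation_s>0_1}. (The $\et,\er$ frame factors only contribute harmless $\nu^k$'s that are dominated since $\nu\ll\mu$.)

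\textbf{Step 2: norm inflation at $t^*$.} Write $\overline{u}_\theta(t^*)=\ep^2\mu^{2/p-s}\nu^{1/p}\,\sin\!\big(\varphi-\ep^{-N}\mu^{-1}\rho^{-1}\big)g(\mu\rho)$. I would estimate $|\overline{u}(t^*)|_{\dot B^s_{p,\infty}}$ from below by a duality pairing against $\phi=\Lambda$-type wave packet: take a smooth bump adapted to the support and multiply by the complex exponential $e^{\,i\ep^{-N}\mu^{-1}\rho^{-1}}$ (the conjugate phase), normalized in $\dot B^{-s}_{p',\infty}$. Because the test function's frequency matches the inflated frequency $\Lambda\sim\ep^{-N}\mu^2\nu$ of $\overline{u}_\theta(t^*)$, the pairing picks out the full amplitude times the measure of the support, $\int \overline{u}_\theta(t^*)\phi\,dx\gtrsim \ep^2\mu^{2/p-s}\nu^{1/p}\cdot(\mu^{-2}\nu^{-1})$, while the normalization of $\phi$ costs $\Lambda^{-s}\cdot(\mu^{-2}\nu^{-1})^{-1/p'}\sim(\ep^{-N}\mu^2\nu)^{-s}(\mu^{-2}\nu^{-1})^{-1+1/p}$. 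Multiplying these and collecting powers with $\nu=\mu^{1-b}$, $b=\tfrac1{10}(-1-s+3/p)$, the $\mu,\nu$-powers cancel and one is left with $\gtrsim \ep^{2+Ns}$; since $N\geq 100/s$, $Ns\geq 100$, so this is $\geq\ep^{-2}$ once $\ep\leq\ep_0(s,p)$. One must also check the $\er,\et$-frame factors and the $\rho$-derivatives of the cutoff $g(\mu\rho)$ and of $\rho^{-1}$ do not spoil the frequency localization of $\phi$ — they are all at scale $\lesssim\mu$ or $\lesssim\ep^{-N}\mu$, hence below $\Lambda$, so they only perturb lower-order terms.

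\textbf{Main obstacle.} The delicate point is Step 2: rigorously isolating the ``inflated frequency'' in a genuinely inhomogeneous, curved object ($\rho^{-1}$ is not a linear phase, the support is an annular shell of anisotropic thickness $\mu^{-2}\times\mu^{-1}$ in the $(r,z)$ scales, and one must pass between the $(\rho,\varphi)$ coordinates and Cartesian frequency space). The cleanest route is probably a stationary-phase / non-stationary-phase dichotomy: show that $\Delta_q\overline{u}_\theta(t^*)$ has its dominant block at $2^q\sim\ep^{-N}\mu^2\nu$ by writing $\overline{u}_\theta(t^*)$ as an oscillatory integral and applying non-stationary phase to kill all other blocks, then lower-bound that one block directly. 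Alternatively one adapts the corresponding estimate from \cite{2404.07813} (which the authors flag they are following with ``enhanced bookkeeping for Besov estimates''), replacing the $\dot W^{s,p}$ computation there with the $\dot B^s_{p,\infty}$ one via \eqref{eq:besov_embedding} in the easy direction, noting $\dot B^s_{p,\infty}\supset\dot W^{s,p}$ only gives an upper bound so the lower bound genuinely needs the Littlewood–Paley or duality argument above rather than a black-box citation.
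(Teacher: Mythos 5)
Your Step 1 is essentially the paper's argument: two-sided integer Sobolev bounds $|\overline{u}_0|_{\dot W^{k,p}}\lesssim\ep^2\mu^{k-s}$ plus Lemma \ref{lemma:besov_interpolation} between $s_1=0$ and $s_2=k_0$. That part is fine.

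Step 2 has a genuine gap. You correctly identify the mechanism (each $\rho$-derivative of the phase $\zeta(\rho)=\ep^{-N}\mu^{-1}\rho^{-1}$ costs $\sim\ep^{-N}\mu$ on $\{\rho\sim\mu^{-1}\}$, dominating the $\mu$ from $g(\mu\rho)$) and the target exponent $\ep^{2-Ns}$, but the method you propose for converting this into a lower bound on $|\overline{u}(t^*)|_{\dot B^{s}_{p,\infty}}$ --- a duality pairing against a wave packet at the inflated frequency, to be justified by a stationary/non-stationary phase dichotomy in curved coordinates --- is exactly the part you leave unexecuted and label ``the main obstacle.'' The paper never does any of this. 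Its device, which you miss, is to run the interpolation inequality \emph{backwards}: from Lemma \ref{lemma:besov_interpolation} with $s<k_0<2k_0$ one gets
\begin{equation}
|\overline{u}(t^*)|_{\dot B^{s}_{p,\infty}}\gtrsim |\overline{u}(t^*)|_{\dot W^{2k_0,p}}^{-\frac{k_0-s}{k_0}}\,|\overline{u}(t^*)|_{\dot W^{k_0,p}}^{\frac{2k_0-s}{k_0}},
\end{equation}
so the lower bound in $\dot B^{s}_{p,\infty}$ follows from a \emph{lower} bound on $|\overline{u}_\theta(t^*)\et|_{\dot W^{k_0,p}}$ and an \emph{upper} bound on $|\overline{u}(t^*)|_{\dot W^{2k_0,p}}$, both two-sided integer-order estimates $\sim\ep^{2-kN}\mu^{k-s}$ obtained by the Faà di Bruno computation of Lemma \ref{lemma:G_estimates} (the term where all $\rho$-derivatives hit the phase dominates once $\ep$ is small, and $|\nabla^k f|\geq|\p_\rho^k f|$ gives the lower bound). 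No Littlewood--Paley localization, duality, or oscillatory-integral analysis is needed; your closing remark that ``the lower bound genuinely needs the Littlewood--Paley or duality argument'' is precisely the wrong conclusion.

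Two further quantitative slips that would derail your duality route if you tried to execute it: the inflated physical frequency is $\ep^{-N}\mu$, not $\ep^{-N}\mu^{2}\nu$ (the variable $\rho$ is already a physical length, so there is no extra $\mu\nu$ from ``the $\rho\mapsto\mu\rho$ scaling''; likewise the initial frequency is $\mu$, not $\mu^{2}\nu$); with your value of $\Lambda$ the $\mu,\nu$ powers do not cancel as claimed. Also the exponent you reach, $\ep^{2+Ns}$, is small rather than large; the correct count is $\ep^{2-Ns}$.
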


To prove Proposition \ref{prop:approximate_inflation_s>0} and for later use, we need the following elementary result. 

\begin{lemma}\label{lemma:G_estimates}
Let $h: \TT \to \RR$ be smooth and $2\pi$-periodic. Denote by $ \zeta  (\rho )    =  \ep^{ -N}  \mu^{  -1 }   \rho^{-1}   $. Let $G: \RR^3 \to \RR$ be defined in the toroidal $\rho \varphi$ coordinates by $ G = h (\varphi - \zeta  (\rho ) ) g(\mu \rho )$ with $g$ from \eqref{eq:def_profiles}.

Then for any $k \in \NN$ and $1 \leq q \leq \infty$, there exists $\ep_k > 0 $ (depending on $h$ and $g$) such that for any $0 < \ep \leq \ep_k $
\begin{equation}
| \p_\rho^k G |_{L^{ q}(\RR^3 )} \sim  \ep^{-N k }\mu^{ k - \frac{2}{q}} \nu^{ - \frac{1}{q}}
\end{equation}
where the implicit  constant does not depend on $\ep,\mu$ or $\nu$.

\end{lemma}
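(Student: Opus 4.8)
The statement is about the $L^q$ size of $\partial_\rho^k G$ where $G = h(\varphi - \zeta(\rho))g(\mu\rho)$ and $\zeta(\rho) = \epsilon^{-N}\mu^{-1}\rho^{-1}$. The governing heuristic is that each $\rho$-derivative hitting $h(\varphi-\zeta(\rho))$ produces a factor $\zeta'(\rho) = -\epsilon^{-N}\mu^{-1}\rho^{-2}$, which on the support $\rho\sim\mu^{-1}$ (so $\mu\rho\sim 1$) has size $\sim \epsilon^{-N}\mu$; each $\rho$-derivative hitting $g(\mu\rho)$ produces only a factor $\mu$. So for $\epsilon$ small the dominant term in $\partial_\rho^k G$ is $(\zeta'(\rho))^k h^{(k)}(\varphi-\zeta(\rho))g(\mu\rho)$, of pointwise size $\sim \epsilon^{-Nk}\mu^k$ on a set where $g(\mu\rho)\not\equiv 0$. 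Then I convert the pointwise bound to $L^q$ by noting the support in $\RR^3$: it is a solid torus with tube radius $\sim\mu^{-1}$ located at distance $\sim\nu^{-1}$ from the axis, so its volume is $\sim \mu^{-2}\nu^{-1}$, giving the stated $\mu^{k}\cdot(\mu^{-2}\nu^{-1})^{1/q} = \epsilon^{-Nk}\mu^{k-2/q}\nu^{-1/q}$. Here I use $\nu=\mu^{1-b}$ only implicitly through the geometry $r\sim\nu^{-1}$ on the support.

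Carrying this out: first, I expand $\partial_\rho^k G$ by the Faà di Bruno / Leibniz rule into a finite sum of terms of the form $c_{\alpha,\beta} \,\big(\prod_i (\zeta^{(m_i)}(\rho))\big)\, h^{(j)}(\varphi-\zeta(\rho))\, \mu^{\ell} g^{(\ell)}(\mu\rho)$ with $j$ derivatives total falling on $h$. Since all derivatives $\zeta^{(m)}(\rho) = (-1)^m m!\,\epsilon^{-N}\mu^{-1}\rho^{-m-1}$ and $\rho\sim\mu^{-1}$ on $\mathrm{supp}\,g(\mu\rho)$, each factor $\zeta^{(m_i)}(\rho)$ is comparable to $\epsilon^{-N}\mu$, and each $\mu^\ell g^{(\ell)}(\mu\rho)$ contributes $\mu^\ell\lesssim 1$ in the variable $\mu\rho$. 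Hence every term is $O(\epsilon^{-N(\text{number of }h\text{-derivatives})}\mu^{(\cdot)})$, and the unique largest one, as $\epsilon\to 0$, is the pure chain-rule term $(\zeta'(\rho))^k h^{(k)}(\varphi-\zeta(\rho))g(\mu\rho)$, whose pointwise magnitude is exactly $\sim \epsilon^{-Nk}\mu^k |g(\mu\rho)|$ modulo the oscillating factor $h^{(k)}$.

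Second, for the lower bound I need $h^{(k)}$ and $g$ not to conspire to cancel. The key point is that in the $L^q$ norm I integrate over $\varphi\in\TT$ as well; since $h$ is smooth, $2\pi$-periodic and not identically constant after $k$ derivatives, $\int_\TT |h^{(k)}(\varphi - c)|^q\,d\varphi$ is a strictly positive constant independent of the shift $c = \zeta(\rho)$, so the oscillation cannot destroy the bound — this is where periodicity in $\varphi$ is essential. Integrating the pointwise estimate over the solid torus (using the Jacobian $r\,dr\,dz\,d\theta \sim \nu^{-1}\rho\,d\rho\,d\varphi\,d\theta$, with $\rho\sim\mu^{-1}$, $\int d\theta\sim 1$, $\int\rho\,d\rho\sim\mu^{-2}$) produces $\|\partial_\rho^k G\|_{L^q}^q \sim (\epsilon^{-Nk}\mu^k)^q\,\mu^{-2}\nu^{-1}$, giving the claimed two-sided bound. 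The error terms, being smaller by at least one power of $\epsilon^{N}$, are absorbed once $\epsilon\le\epsilon_k$ for a suitable threshold depending on $h,g,k,q$.

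**Main obstacle.** The routine part is the Leibniz/chain-rule bookkeeping; the one genuinely delicate point is the lower bound — making sure the leading term does not cancel against the lower-order terms and that the oscillatory factor $h^{(k)}(\varphi-\zeta(\rho))$ does not vanish in $L^q(\TT)$. Both are handled by (i) integrating in $\varphi$ to get a shift-independent positive constant from $\int_\TT|h^{(k)}|^q$, and (ii) choosing $\epsilon$ small enough that the subleading terms (smaller by a factor $\epsilon^{N}$) are a fraction of the main term; I would also need the elementary fact that, on $\mathrm{supp}\,g(\mu\rho)$, the quantities $\rho$ and $\mu^{-1}$ are genuinely comparable (from $\mathrm{supp}\,g\subset\{1\le\rho\le 3/2\}$ in the rescaled variable), which keeps all the $\rho^{-m}$ factors under control.
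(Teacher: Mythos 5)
Your proposal is correct and follows essentially the same route as the paper: expand $\p_\rho^k G$ by Leibniz/Fa\`a di Bruno, observe $|\zeta^{(j)}|\sim \ep^{-N}\mu^{j}$ on $\Supp g(\mu\rho)$ so that the pure chain-rule term $(\zeta')^k h^{(k)}(\varphi-\zeta)g(\mu\rho)$ dominates all others by a factor $\ep^{N}$, convert to $L^q$ via the support volume $\sim\mu^{-2}\nu^{-1}$, and shrink $\ep_k$ to absorb the remainder. Your explicit justification of the lower bound via the shift-invariance of $\int_\TT|h^{(k)}(\varphi-c)|^q\,d\varphi$ is a welcome detail the paper leaves implicit (and the mid-proof phrase ``each factor $\zeta^{(m_i)}$ is comparable to $\ep^{-N}\mu$'' should read $\ep^{-N}\mu^{m_i}$, though your final power count is right).
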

\begin{proof}
Since $g$ is supported  on $  \rho \subset [1, \frac{3}{2}] $,  $G$ is smooth and compactly away from $\rho =0 $ on $ \RR^3$. We need to estimate
\begin{equation} \label{eq:axu_lemma:G_estimates_0}
\begin{aligned}
\p_\rho^{ k}  G & =      \sum_{0\leq i \leq k  }   \mu^{ k-i} g^{(k-i)}(\mu \rho ) \p_\rho^{ i}   \left( h (\varphi - \zeta  (\rho ) ) \right).
\end{aligned}
\end{equation}

Let us consider the main term of \eqref{eq:axu_lemma:G_estimates_0}
\begin{equation}\label{eq:axu_lemma:G_estimates_2}
G_{k, p } : =     h^{(k )}(\varphi - \zeta  (\rho )  )  \left( \zeta^{(1)}  \right)^{k  }   g (\mu \rho ),
\end{equation}
and the decomposition 
\begin{equation}\label{eq:axu_lemma:G_estimates_3}
\p_\rho^{ k}  G     = G_{k, p }  + G_{k, c } .
\end{equation}

By Faà di Bruno's formula, for any $i\in \NN$, we see that
\begin{equation}\label{eq:axu_lemma:G_estimates_1}
\p_\rho^{ i}   \left( h (\varphi - \zeta  (\rho ) ) \right)    = \sum_{m_1,\dots,m_i} C_{  m_1,\dots,m_i} h^{(m_1+\dots+m_i)}(\varphi - \zeta  (\rho )  ) \Pi_{ 1\leq j \leq  i }  \left( \zeta^{(j)}  \right)^{m_j},
\end{equation}
where the summation runs over $m_1 + 2m_2 + \dots + i m_i = i $.

Observe that each $\zeta^{(j)}(\rho) $ is of the form $ C_j  \ep^{ -N}  \mu^{-1}   \rho^{-1 -j}  $. So    on the support of $g(\mu\rho)$, we have $ |\zeta^{(j)}(\rho)| \sim  \ep^{ -N}  \mu^{ j}   $.

It follows from \eqref{eq:axu_lemma:G_estimates_1} that in $ G_{k, c }  $, the  largest possible factor of $\ep$ is $(\ep^{ -N}   )^{k  - 1 }  $, namely
\begin{equation}\label{eq:axu_lemma:G_estimates_4}
\begin{aligned}
| G_{k, c }   |_{L^{ \infty }} &  \lesssim   (\ep^{ -N}   )^{k - 1 }      \mu^{k }   .
\end{aligned}
\end{equation}
Using the support property of $g(\mu \rho)$ we obtain from \eqref{eq:axu_lemma:G_estimates_4} that
\begin{equation}\label{eq:axu_lemma:G_estimates_44}
\begin{aligned}
| G_{k, c }   |_{L^{ q }} &  \lesssim   (\ep^{ -N}   )^{k - 1 }       \mu^{k -\frac{2}{q} }  \nu^{  -\frac{1}{q} }  .     
\end{aligned}
\end{equation}

On the other hand, the main term \eqref{eq:axu_lemma:G_estimates_2} satisfies the   bound
\begin{equation} \label{eq:axu_lemma:G_estimates_5}
\begin{aligned}
| G_{k, p } |_{L^{ q }} 
& \sim (\ep^{ -N}   )^{k    }     \mu^{k -\frac{2}{q} }  \nu^{  -\frac{1}{q} }       .
\end{aligned}
\end{equation}
Since the constants in \eqref{eq:axu_lemma:G_estimates_44} and \eqref{eq:axu_lemma:G_estimates_5} are independent of $\ep$, for each $k \in \NN$ we can choose $\ep_k > 0 $ such that  $| G_{k, c }   |_{L^{ q}}  \ll  | G_{k, p } |_{L^{ q}}  $, which implies 
\begin{equation}
| \p_\rho^{ k}  G |_{L^{ q }}     \sim  | G_{k, p }  |_{L^{ p}} \sim (\ep^{ -N}   )^{k    }     \mu^{k -\frac{2}{q} }  \nu^{  -\frac{1}{q} }       .
\end{equation}

\end{proof}

With the help of Lemma \ref{lemma:G_estimates}, we can finish the 

\begin{proof}[Proof of Proposition \ref{prop:approximate_inflation_s>0}]
Since $\overline{u}_0 =  \overline{u}_{0,\theta} \et + \overline{u}_r \er + \overline{u}_z \ez   $, by repeating the argument of Lemma \ref{lemma:estimates_overline_u} while keeping track of $\ep$ we can obtain
\begin{equation}\label{eq:aux_prop:approximate_inflation_s>0_1}
\begin{aligned}
|   \overline{u}_r  \er   |_{\dot  W^{k, p }    }  & \lesssim  \ep^{   2 } \mu^{k-s }\\
|    \overline{u}_z    \ez  |_{  \dot  W^{k, p }    }   & \lesssim  \ep^{   2 } \mu^{k-s }\\
|    \overline{u}_{0,\theta}   \et   |_{\dot  W^{k, p }     }   & \lesssim  \ep^{   2 }\mu^{k-s }
\end{aligned}
\end{equation}
for all $k \in \NN $.  So the upper bound \eqref{eq:prop:approximate_inflation_s>0_1} follows by applying Lemma \ref{lemma:besov_interpolation} with $s_1 =0 <s$ and $s_2 =k_0 >s$:
\begin{equation}\label{eq:aux_prop:approximate_inflation_s>0_1ab}
|  \overline{u}_0    |_{  \dot B^{s}_{p,1}} \lesssim   |  \overline{u}_0    |_{  \dot B^{0}_{p,\infty}}^{1- \frac{s}{k_0}} |  \overline{u}_0    |_{  \dot B^{k_0}_{p,\infty }}^{1- \frac{s}{k_0}}  \lesssim   |  \overline{u}_0    |_{  \dot L^{p}}^{1- \frac{s}{k_0}} |  \overline{u}_0    |_{  \dot W^{k_0, p  }}^{1- \frac{s}{k_0}} \lesssim \ep^2 .
\end{equation}

For the upper bound \eqref{eq:prop:approximate_inflation_s>0_2}, since only $ \overline{u}_\theta$ evolves with time, by \eqref{eq:aux_prop:approximate_inflation_s>0_1ab} it suffices to prove
\begin{equation} \label{eq:aux_prop:approximate_inflation_s>0_2}
|  \overline{u}_\theta  (t^*)  \et |_{\dot B^{s}_{p,\infty } } \gtrsim  \ep^{ - 2 } .
\end{equation}
In what follows we will first show that for any $k\in \NN$ if $\ep>0$ is sufficiently small, then
\begin{align}\label{eq:aux_prop:approximate_inflation_s>0_3b}
|  \overline{u}_\theta  (t^*)  \et |_{\dot W^{k , p }  } & \sim \ep^{2- k N } \mu^{k    -s }  
\end{align}
and then pass to  \eqref{eq:aux_prop:approximate_inflation_s>0_2} using Lemma \ref{lemma:besov_interpolation}.

We now focus on showing   \eqref{eq:aux_prop:approximate_inflation_s>0_3b}.
Thanks to the fact $ | \nabla^{k} \et |\lesssim \nu^{k} $ on the support of $\overline{u}_\theta  (t^*)  $, we see that  the estimate of $|  \nabla^{k }(\overline{u}_\theta  (t^*)  \et )  |_{L^{p } }$ is   dominated by $|  \nabla^{k_0} \overline{u}_\theta  (t^*)     |_{L^{p } }$.

Based on these considerations and \eqref{eq:diff_rho_varphi}, we apply Lemma \ref{lemma:G_estimates} to $\p_\varphi^j \overline{u}_{ \theta} (t^* )   $ and obtain that for any $i \in \NN$, there exists $\ep_i > 0 $ such that for any $0 < \ep \leq \ep_i $
\begin{equation}\label{eq:aux_prop:approximate_inflation_s>0_5}
\Big| \p_\rho^{ i} \p_\varphi^j ( \overline{u}_{ \theta} (t^* )   )  \Big|_{L^p }  \sim  \ep^{2-Ni} \mu^{ i  -s }.
\end{equation}

Now thanks to \eqref{eq:aux_prop:approximate_inflation_s>0_5}, the   lower bound of \eqref{eq:aux_prop:approximate_inflation_s>0_3b} follows
\begin{align}
\Big|\nabla^k  \big( \overline{u}_\theta  (t^*)  \et \big) \Big|_{L^{ p }  } & \geq  \Big|\p_\rho^k  \big( \overline{u}_\theta  (t^*)  \et \big) \Big|_{L^{ p }  } \\
& \geq  |    \p_\rho^k   \overline{u}_\theta  (t^*)    |_{  L^{p}  }   \\
& \gtrsim  \ep^{2- k N } \mu^{ 1 -s }.
\end{align}

Similarly the   upper bound of \eqref{eq:aux_prop:approximate_inflation_s>0_3b} also follows from \eqref{eq:aux_prop:approximate_inflation_s>0_5}:
\begin{align}
|\nabla^{k }  ( \overline{u}_\theta  (t^*)  \et) |_{  L^{p } } & \lesssim   \sum_{ 0\leq i \leq k }| \nabla^{i }    \overline{u}_\theta  (t^*)  \nabla^{k  -i  }\et |_{  L^{p}  }     \nonumber \\
& \lesssim \sum_{ 0\leq i\leq k } \nu^{k - i }| \nabla^{i }       \overline{u}_\theta  (t^*)    |_{  L^{p}  }    \nonumber \\
& \lesssim  \sum_{ 0\leq i \leq k }  \nu^{k  - i } \ep^{2- i N } \mu^{ i -s } \nonumber\\
& \lesssim   \ep^{2- k  N } \mu^{ k  -s } \label{eq:aux_prop:approximate_inflation_s>0_7},
\end{align}  
where we used $|\nabla^{k  - i  }\et| \lesssim \nu^{k -  i } $ on the support of $\overline{u}_\theta $   in the second inequality and \eqref{eq:aux_prop:approximate_inflation_s>0_5} with \eqref{eq:diff_rho_varphi}  in the third inequality.

Thus both directions of \eqref{eq:aux_prop:approximate_inflation_s>0_3b} have been established. Since  $   s< k_0 < 2k_0$, by Lemma \ref{lemma:besov_interpolation}
\begin{align}\label{eq:aux_prop:approximate_inflation_s>0_8}
|\overline{u} (t^*)   |_{\dot W^{k_0 , p  } }   & \lesssim   | \overline{u} (t^*)   |_{\dot B^{s}_{p,\infty } }^\frac{k_0  }{ 2k_0 -s} | \overline{u} (t^*)   |_{\dot W^{ 2 k_0 , p  } }^{  \frac{k_0 -s }{2k_0 -s }}  ,
\end{align}
and therefore similar to \eqref{eq:aux_prop:approximate_inflation_s>0_1ab},  by \eqref{eq:aux_prop:approximate_inflation_s>0_7} and \eqref{eq:aux_prop:approximate_inflation_s>0_8} we can conclude that    for all sufficiently small $\ep>0$ 
\begin{align}\label{eq:aux_prop:approximate_inflation_s>0_9}
| \overline{u} (t^*)   |_{\dot B^{s}_{p,\infty } } & \gtrsim  | \overline{u} (t^*)   |_{\dot W^{ 2 k_0 , p  } }^{- \frac{k_0 -s }{k_0 }}  |\overline{u} (t^*)   |_{\dot W^{k_0 , p  } }^{  \frac{2k_0 -s }{k_0 }}  \\
& \gtrsim \ep^{2 - sN } \\
& \gtrsim \ep^{ - 2 }  
\end{align}
where we have used $s N  \geq 100$.

\end{proof}

\subsection{Estimates of vorticity}
We record some useful estimates for the vorticity of the approximate solution.
\begin{lemma}\label{lemma:vorticity_s>0}
Under the setting of Proposition \ref{prop:approximate_inflation_s>0}, for any $1 \leq q \leq \infty$, there holds,
\begin{equation}
| \overline{\omega } (t^*)|_{L^q } \gtrsim \ep^{-2} \mu^{1 -s}  \mu^{\frac{2}{p} -\frac{2}{q} }  \nu^{ \frac{1}{p} -\frac{1}{q} }      ,
\end{equation}
where $ \overline{\omega }: = \nabla \times \overline{u}  $ is the vorticity of $\overline{u} $.

\end{lemma}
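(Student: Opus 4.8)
The plan is to reduce the vorticity bound to a statement about the swirl component $\overline u_\theta$ alone, for which the needed estimates are already in place. Since $\overline u = \overline u_\theta \et + \overline u_r\er + \overline u_z\ez$ is axisymmetric, its vorticity is
\[
\overline\omega = -\p_z\overline u_\theta\,\er + (\p_z\overline u_r - \p_r\overline u_z)\,\et + \tfrac1r\p_r(r\overline u_\theta)\,\ez .
\]
The $\et$-component — the poloidal vorticity produced by $(\overline u_r,\overline u_z)$ — is pointwise orthogonal to the other two, so it only helps the lower bound and I would simply discard it, keeping $|\overline\omega|^2 \ge (\p_z\overline u_\theta)^2 + (\tfrac1r\p_r(r\overline u_\theta))^2$. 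In particular no estimate on the (stationary) $rz$-part is needed here.

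Next I would convert this into a bound involving $\p_\rho\overline u_\theta$, the single derivative that picks up the $\ep^{-N}$ amplification built into the phase at time $t^*$. Using $a^2+b^2\ge\tfrac12(|a|+|b|)^2$, then $|\tfrac1r\p_r(r\overline u_\theta)|\ge|\p_r\overline u_\theta|-\tfrac{|\overline u_\theta|}{r}$, and finally $|\p_\rho f|\le|\nabla_{r,z}f|\le|\p_r f|+|\p_z f|$ from \eqref{eq:diff_rho_varphi_2}, one obtains the pointwise inequality $|\overline\omega|\gtrsim|\p_\rho\overline u_\theta|-\tfrac{|\overline u_\theta|}{r}$ on $\Supp\overline u$. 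Taking $L^q$ norms and using $r\sim\nu^{-1}$ there gives $|\overline\omega(t^*)|_{L^q}\gtrsim|\p_\rho\overline u_\theta(t^*)|_{L^q}-\nu\,|\overline u_\theta(t^*)|_{L^q}$. Both terms on the right are then read off from Lemma \ref{lemma:G_estimates} applied to $\overline u_\theta(t^*)=\ep^2\mu^{2/p-s}\nu^{1/p}\,h(\varphi-\zeta(\rho))g(\mu\rho)$ with $h=\sin$ (using \eqref{eq:def_overline_u_theta_1}): with $k=1$ it gives $|\p_\rho\overline u_\theta(t^*)|_{L^q}\sim\ep^{2-N}\mu^{1-s}\mu^{\frac2p-\frac2q}\nu^{\frac1p-\frac1q}$, and with $k=0$ it gives $|\overline u_\theta(t^*)|_{L^q}\sim\ep^{2}\mu^{-s}\mu^{\frac2p-\frac2q}\nu^{\frac1p-\frac1q}$, valid for $\ep$ small. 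Hence the subtracted term is smaller than the main term by the factor $\ep^N\mu^{-1}\nu=\ep^N\mu^{-b}\to0$ (recall $\nu=\mu^{1-b}$), so it is negligible once $\mu$ is large, leaving $|\overline\omega(t^*)|_{L^q}\gtrsim\ep^{2-N}\mu^{1-s}\mu^{\frac2p-\frac2q}\nu^{\frac1p-\frac1q}$; since $N\ge100$ and $\ep\le1$ we have $\ep^{2-N}\ge\ep^{-2}$, which is the claimed estimate.

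I expect the proof to be essentially routine given the earlier lemmas: the only place where genuine work hides is the oscillatory lower bound $|\p_\rho G|_{L^q}\gtrsim\ep^{-N}\mu^{1-\frac2q}\nu^{-\frac1q}$ — i.e.\ that the highly oscillatory, $\ep^{-N}$-amplified main term $h'(\varphi-\zeta(\rho))\zeta'(\rho)g(\mu\rho)$ genuinely dominates every lower-order correction coming out of Fa\`a di Bruno — but this is exactly the content of Lemma \ref{lemma:G_estimates}, so nothing new is required. The remaining care is purely bookkeeping: tracking the $\nu$ versus $\mu$ powers when passing between $L^\infty$ and $L^q$ through the support size $\sim\mu^{-2}\nu^{-1}$, and confirming that the lower-order term $\overline u_\theta/r$ (and the discarded poloidal vorticity) sit safely below the main term, which they do thanks to the anisotropy $\nu\ll\mu$ combined with the large exponent $N$.
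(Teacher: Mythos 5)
Your proof is correct and follows essentially the same route as the paper: both arguments reduce the vorticity lower bound to the single quantity $|\p_\rho \overline{u}_\theta(t^*)|_{L^q}$, invoke Lemma \ref{lemma:G_estimates} for the $\ep^{-N}$-amplified main term, and absorb the lower-order corrections using $N\geq 100$ and $\nu\leq\mu$. The only (harmless) difference is that the paper works solely with the radial vorticity $\overline{\omega}_r=-\p_z\overline{u}_\theta$ and treats $\rho^{-1}\p_\varphi\overline{u}_\theta$ as the error after writing $\p_z=\sin\varphi\,\p_\rho+\cos\varphi\,\rho^{-1}\p_\varphi$, whereas you combine the $\er$ and $\ez$ components pointwise via $|\p_\rho f|\leq|\p_r f|+|\p_z f|$, which cleanly sidesteps the $\sin\varphi$ weight in the paper's reverse triangle inequality.
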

\begin{proof}
Denote $\overline{\omega } = \overline{\omega }_\theta \et  + \overline{\omega }_r \er +    \overline{\omega }_z \ez $. It suffices to show the lower bounds for $ \overline{\omega }_r(t^*) = 
\p_z \overline{ u}_\theta (t^*)$.

Since $\p_z \overline{ u}_\theta = (\sin(\varphi) \p_\rho + \cos(\varphi) \frac{1}{\rho} \p_\varphi )\overline{ u}_\theta$, we obtain from \eqref{eq:aux_prop:approximate_inflation_s>0_5} that
\begin{align}
|\p_z \overline{ u}_\theta (t^*) |_{L^q} \geq |\p_\rho \overline{ u}_\theta (t^*)|_{L^q} - |\frac{1}{\rho} \p_\varphi \overline{ u}_\theta (t^*) |_{L^q}   \geq \big(C \ep^{2-N}  - c \ep^{2 }   \big) \mu^{1 -s}  \mu^{\frac{2}{p} -\frac{2}{q} }  \nu^{ \frac{1}{p} -\frac{1}{q} }.
\end{align}
Since $N \geq 100$, the first term above dominates the second, and hence we have
\begin{equation}
| \overline{\omega } (t^*)|_{L^q } \geq |\p_z \overline{ u}_\theta (t^*) |_{L^q} \gtrsim \ep^{-2} \mu^{1 -s}  \mu^{\frac{2}{p} -\frac{2}{q} }  \nu^{ \frac{1}{p} -\frac{1}{q} }      .
\end{equation}

\end{proof}

\section{Growth of the approximate solution: \texorpdfstring{$s<0$}{s<0}}\label{sec:overline_u_inflation_s<0}

In this section, we show the norm inflation for the approximate solution $ \overline{u} $ when $s<0 $. The estimates are more complex than those for $s>0$, as here we rely on duality to estimate norms with a negative derivative.

The main result of this section is the extension of Proposition \ref{prop:approximate_inflation_s>0} to the case $s<0$.

\begin{proposition}[Norm inflation for $\overline{u}$: $s<0$]\label{prop:approximate_inflation_s<0}
Let $s,p $ be given as in Theorem \ref{thm:Besov} such that $s<0$. The approximate solution $\overline{u}$ constructed by Proposition \ref{prop:approximate} satisfies  

\begin{enumerate}
\item  \textbf{Small initial data:} The initial data $\overline{u}_0  $ is small in $ \dot B^{s}_{p,1} (\RR^3)$:
\begin{equation}\label{eq:prop:approximate_inflation_1}
|  \overline{u}_0    |_{  \dot B^{s}_{p,1}} \lesssim  \ep^{   2 } .
\end{equation}

\item \textbf{Norm inflation at critical time:} There exists $\ep_0 = \ep_0(s,p )>0$ such that for all $ 0< \ep \leq \ep_0$ at the critical time $t^*= t^*(\ep,\mu)>0$ (defined in \eqref{eq:def_critical_t*}), $\overline{u}$ exhibits $\dot B^{s}_{p, \infty }(\RR^3)$   inflation:
\begin{equation}\label{eq:prop:approximate_inflation_2}
|  \overline{u} (t^*)   |_{\dot B^{s}_{p,\infty } } \gtrsim  \ep^{ - 2 }.
\end{equation}

\end{enumerate}
Both implicit   constants are independent of $\ep$ or $\mu,\nu$.
\end{proposition}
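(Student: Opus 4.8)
Here is how I would approach Proposition \ref{prop:approximate_inflation_s<0}.

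\medskip

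The plan is to run the $s>0$ argument of \S\ref{sec:overline_u_inflation_s>0} in ``dual'' form: since $s<0$, every relevant norm carries a negative derivative and must be accessed through the duality \eqref{eq:besov_duality}. The mechanism is the reverse of the $s>0$ one: the ``premixed'' swirl datum $\overline{u}_{0,\theta}$ is spectrally concentrated at the high frequency $\sim\ep^{-N}\mu$ (because of the chirp $\zeta(\rho)=\ep^{-N}\mu^{-1}\rho^{-1}$ placed in its phase), and at $t=t^*$ the passive transport by $(\overline u_r,\overline u_{z,p})$ \emph{un-mixes} it into a profile localized at frequency $\sim\mu$ with the same $L^q$ size; since $s<0$, moving Fourier support from $\ep^{-N}\mu$ down to $\mu$ multiplies the $\dot B^{s}_{p,\cdot}$ norm by $\ep^{-N|s|}$, which is the inflation. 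Concretely, I would first use Lemma \ref{lemma:overline_u_2D_Euler} to write $\overline{u}_\theta(t)=u_{0,\theta}\big(\rho,\varphi-t\,\ep^2\mu^{-s+2/p}\nu^{1/p}\rho^{-1}\big)$ on the support of $g(\mu\rho)$; then, expanding $\partial_\rho^{k_0}$ by Leibniz / Fa\`a di Bruno and setting $t=t^*$, so that $\varphi+\zeta(\rho)\mapsto\varphi$ in every term, I obtain
$$
\overline{u}_\theta(t^*)=(-1)^{k_0}\ep^{-2}\mu^{\frac2p-s}\nu^{\frac1p}\,\Theta+\mathcal R,\qquad \Theta:=\sin^{(k_0)}(\varphi)\,(\mu\rho)^{-2k_0}g(\mu\rho),
$$
where $\Theta$ is a fixed smooth profile supported on the thin torus $\{\mu\rho\in[1,\tfrac32]\}$ and $\mathcal R$ gathers the terms with strictly fewer factors $\zeta'(\rho)\sim\ep^{-N}\mu$, hence $|\mathcal R\,\et|_{\dot W^{k,q}}\lesssim\ep^{N}\,\ep^{-2}\mu^{k-s}\mu^{2/p-2/q}\nu^{1/p-1/q}$ for all $k,q$. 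In particular $\overline{u}_\theta(t^*)\et$ is localized at frequency $\sim\mu$ with $|\overline{u}_\theta(t^*)\et|_{L^q}\sim\ep^{-2}\mu^{\frac2p-s-\frac2q}\nu^{\frac1p-\frac1q}$, the $s<0$ analogue of the sharp estimates of Lemma \ref{lemma:G_estimates} (and the input to the later vorticity bounds).

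\medskip

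For part (1), split $\overline{u}_0=\overline{u}_{0,\theta}\et+\overline{u}_r\er+\overline{u}_z\ez$. The $rz$-part is a fixed profile at scale $\mu^{-1}$ of amplitude $\sim\ep^{2}\mu^{2/p-s}\nu^{1/p}$ built from $f=\widetilde{f}^{(N)}$; its $N$ vanishing moments suppress the low-frequency Littlewood--Paley tail, so it is localized at frequency $\sim\mu$ and $|\overline{u}_r\er+\overline{u}_z\ez|_{\dot B^{s}_{p,1}}\lesssim\mu^{s}|\overline{u}_r\er+\overline{u}_z\ez|_{L^p}\lesssim\ep^{2}$. For the swirl part, $\overline{u}_{0,\theta}=\ep^{-2+k_0N}\mu^{\frac2p-s-k_0}\nu^{\frac1p}\partial_\rho^{k_0}[\sin(\varphi+\zeta(\rho))g(\mu\rho)]$; since the instantaneous frequency of the chirp $\rho\mapsto\zeta(\rho)$ is $|\zeta'(\rho)|\sim\ep^{-N}\mu$ throughout the support, a non-stationary-phase (iterated integration by parts against the phase) estimate shows $\overline{u}_{0,\theta}\et$ is spectrally concentrated at frequency $\sim\ep^{-N}\mu$ with $|\overline{u}_{0,\theta}\et|_{L^p}\sim\ep^{-2}\mu^{-s}$, so that, summing the Littlewood--Paley pieces and using $s<0$,
$$
|\overline{u}_{0,\theta}\et|_{\dot B^{s}_{p,1}}\lesssim(\ep^{-N}\mu)^{s}\,\ep^{-2}\mu^{-s}=\ep^{-2-Ns}=\ep^{N|s|-2}\le\ep^{2},
$$
the last step by the choice of $N$ in \eqref{eq:def_large_N} (which makes $N|s|$ large). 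Hence $|\overline{u}_0|_{\dot B^{s}_{p,1}}\lesssim\ep^2$.

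\medskip

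For part (2), use the duality \eqref{eq:besov_duality}, $|\overline{u}(t^*)|_{\dot B^{s}_{p,\infty}}\gtrsim\int_{\RR^3}\overline{u}(t^*)\cdot\phi\,dx$ over $\phi\in\mathcal S(\RR^3)$ with $|\phi|_{\dot B^{-s}_{p',1}}\le1$, and test against $\phi:=(-1)^{k_0}c\,\Theta\et$ with $\Theta$ from the first paragraph. Since $\Theta\et$ is localized at frequency $\sim\mu$ — the angular oscillation $\sin^{(k_0)}(\varphi)$ forces frequency $\gtrsim\mu$ (one vanishing moment controls the remaining low-frequency tail), smoothness at scale $\mu^{-1}$ forces frequency $\lesssim\mu$, and the factor $\et$ only modulates at frequency $\sim\nu\ll\mu$ — one has $|\Theta\et|_{\dot B^{-s}_{p',1}}\sim\mu^{-s}(\mu^{-2}\nu^{-1})^{1/p'}$, so $c\sim\mu^{s+2/p'}\nu^{1/p'}$ makes $|\phi|_{\dot B^{-s}_{p',1}}\le1$. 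Because $\et\perp\er,\ez$ the $rz$-part drops out, and
$$
\int_{\RR^3}\overline{u}(t^*)\cdot\phi\,dx=c\int_{\RR^3}\overline{u}_\theta(t^*)\,\Theta\,dx=c\,\ep^{-2}\mu^{\frac2p-s}\nu^{\frac1p}\Big(\int_{\RR^3}\Theta^2\,dx\Big)\big(1+O(\ep^{N})\big)\gtrsim\ep^{-2},
$$
where the powers of $\mu$ and $\nu$ cancel using $\int\Theta^2\sim\mu^{-2}\nu^{-1}$, $\tfrac1p+\tfrac1{p'}=1$, and where the contribution of $\mathcal R$ is $O(\ep^{N})$ relative to the main term. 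This yields $|\overline{u}(t^*)|_{\dot B^{s}_{p,\infty}}\gtrsim\ep^{-2}$ for all small $\ep$.

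\medskip

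The hard part — and the only genuinely new difficulty relative to the $s>0$ case — is the spectral-concentration claim in part (1): that $\overline{u}_{0,\theta}$ really has negligible Littlewood--Paley mass away from $|\xi|\sim\ep^{-N}\mu$, so that its $\dot B^{s}_{p,1}$ norm is the small quantity $\ep^{N|s|-2}$ rather than the useless $\ep^{-2}$ one would get from the $L^p$ bound alone. Proving it requires quantitative bounds on both the low- and high-frequency tails of $\overline{u}_{0,\theta}$, obtained by iterated integration by parts against the chirp phase $\zeta$ (equivalently, stationary phase); this is exactly what the phase $\zeta(\rho)$ and the $\partial_\rho^{k_0}$ in \eqref{eq:def_overline_u_theta0} are designed to make possible. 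A subsidiary bookkeeping point is that the frames $\et,\er,\ez$ oscillate only at the lower frequency $\sim\nu$, so that, since $\nu\ll\mu$, they do not disturb any of the frequency localizations at scale $\mu$ or $\ep^{-N}\mu$.
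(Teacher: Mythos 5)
Your proposal is correct in substance and rests on exactly the same mechanisms as the paper (the anti-derivative structure $f=\widetilde f^{(N)}$ for the $rz$-part, the $\partial_\rho^{k_0}$-of-a-chirp structure of $u_{0,\theta}$ for the smallness at $t=0$, and the un-mixing of the phase $\varphi+\zeta(\rho)\mapsto\varphi$ at $t=t^*$ with main term $\overline u_{\theta,p}=(-1)^{k_0}\ep^{-2}\mu^{2/p-s}\nu^{1/p}\sin^{(k_0)}(\varphi)(\mu\rho)^{-2k_0}g(\mu\rho)$, which you identified correctly). The packaging differs in two places. For part (1), the paper never asserts spectral localization: it bounds $|\,\cdot\,|_{\dot B^{-k_0}_{p,\infty}}$ (resp.\ $|\,u_{0,\theta}\et|_{\dot B^{-k_1}_{p,1}}$ with $k_1=k_0-2$) by duality, integrating by parts in $\rho$ against the test function and tracking the Jacobian corrections ($r\rho$, $\partial_\rho r=\cos\varphi$, the frame factors $\er,\et$), and then interpolates with the $L^p$ bound via Lemma \ref{lemma:besov_interpolation}; your Littlewood--Paley localization at frequency $\ep^{-N}\mu$ is the dual formulation of the same computation and yields the identical number $\ep^{N|s|-2}$, but note that the quantitative low-frequency tail bound you defer to ``iterated integration by parts against the chirp'' is precisely the content of the paper's Lemmas \ref{lemma:est_u_0r}--\ref{lemma:est_u_0theta}, Jacobian corrections included, so this step is where the actual work lives. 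For part (2), the paper avoids duality altogether and uses the inverse interpolation $|f|_{\dot B^{s}_{p,\infty}}\gtrsim |f|_{L^p}^{1-s}|f|_{\dot W^{1,p}}^{s}$ (a lower bound on $L^p$ plus an upper bound on $\dot W^{1,p}$ of the un-mixed solution), whereas you test against the explicit normalized profile $c\,\Theta\et$; both routes are legitimate and of comparable length, and your choice has the small advantage of killing the $rz$-components by orthogonality rather than by separate upper bounds. One bookkeeping remark: your final inequality $\ep^{N|s|-2}\le\ep^2$ requires $N|s|$ large, which matches the paper's intent for \eqref{eq:def_large_N} (read as $N=\max\{100/|s|,100\}$); the literal formula in \eqref{eq:def_large_N} gives $N=100$ for $s<0$, so you are implicitly using the corrected reading, as does the paper in \eqref{aux:lemma:est_u_0theta_2}.
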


In the rest of this section, we prove Proposition \ref{prop:approximate_inflation_s<0} in several steps.

\subsection{Upper bounds for \texorpdfstring{$  \overline{u}_r  \er$}{ur} and \texorpdfstring{$ \overline{u}_z  \ez$}{uz}}

Recall that the initial data $ \overline{u}_0     =  \overline{u}_r  \er     +  \overline{u}_z  \ez    +  \overline{u}_{0,\theta }  \et    $. We start with the upper bound for  $\overline{u}_r  \er $. 

\begin{lemma}\label{lemma:est_u_0r}
Under the setting of Proposition \ref{prop:approximate_inflation_s<0}, there holds
\begin{align}
|  \overline{u}_r  \er   |_{ \dot B^{s}_{p,1}     }  & \lesssim  \ep^{   2 } .
\end{align}
\end{lemma}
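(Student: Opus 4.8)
The plan is to estimate $\overline{u}_r \mathbf{e}_r$ in $\dot B^{s}_{p,1}$ by interpolating between two integer-order Sobolev norms bracketing $s$, exactly as in the $s>0$ case \eqref{eq:aux_prop:approximate_inflation_s>0_1ab} but now with both endpoints negative — or, since $s<0$, between a negative-order endpoint handled by duality and a positive-order one handled directly. Concretely, since $-3 < s - 3/p < -1$ we have $s < -1 + 3/p < k_0$, so I would pick integers $s_1 < s < s_2$ with $s_2$ a small positive integer (say $s_2 = k_0$) and $s_1$ a negative integer chosen below $s$, and apply Lemma \ref{lemma:besov_interpolation} to get $|\overline{u}_r \mathbf{e}_r|_{\dot B^{s}_{p,1}} \lesssim |\overline{u}_r \mathbf{e}_r|_{\dot B^{s_1}_{p,\infty}}^{\alpha} |\overline{u}_r \mathbf{e}_r|_{\dot B^{s_2}_{p,\infty}}^{1-\alpha}$ with $s = \alpha s_1 + (1-\alpha) s_2$.

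For the positive endpoint $\dot B^{s_2}_{p,\infty} \hookrightarrow \dot W^{s_2,p}$, the bound $|\overline{u}_r \mathbf{e}_r|_{\dot W^{s_2,p}} \lesssim_\ep \ep^2 \mu^{s_2 - s}$ comes from running the argument of Lemma \ref{lemma:estimates_overline_u} while tracking the $\ep$-power: from \eqref{eq:def_overline_u_r_u_z}, $\overline{u}_r$ carries the prefactor $\ep^2 \mu^{2/p - s}\nu^{1/p}$, each derivative in the $\rho\varphi$-coordinates costs a factor $\mu$ by \eqref{eq:diff_rho_varphi}, each derivative hitting $\mathbf{e}_r$ costs only $\nu \ll \mu$ by $|\nabla^k \mathbf{e}_r| \lesssim r^{-k} \lesssim \nu^k$ on the support, and the support has measure $\sim \mu^{-2}\nu^{-1}$ which converts the $L^\infty$ bound to the $L^p$ bound and exactly cancels the $\mu^{2/p}\nu^{1/p}$ prefactor. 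For the negative endpoint I would use the duality formula \eqref{eq:besov_duality}: $|\overline{u}_r \mathbf{e}_r|_{\dot B^{s_1}_{p,\infty}} \sim \sup_{\phi} \int \overline{u}_r \mathbf{e}_r \cdot \phi\,dx$ over $\phi$ in the unit ball of $\dot B^{-s_1}_{p',1}$; since $-s_1 > 0$ is an integer, pairing against the profile structure — moving derivatives off $\phi$ and using that $\overline{u}_r$ is built from $f'(\mu\rho)$ with $f = \widetilde f^{(N)}$, so one can integrate $N$ of those derivatives back onto $\phi$ — should produce the gain $\mu^{s_1 - s}$ relative to the $L^p$ size. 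Combining, $\mu^{\alpha(s_1 - s) + (1-\alpha)(s_2 - s)} = \mu^{0} = 1$ since $\alpha s_1 + (1-\alpha)s_2 = s$, leaving the clean bound $\lesssim \ep^2$.

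The same scheme applies verbatim to $\overline{u}_z \mathbf{e}_z$ once one notes the divergence corrector $\overline{u}_{z,c}$ from \eqref{eq:def_overline_u_zc} is even smaller by a factor $\mu^{-1}\nu \ll 1$, so it is harmless.

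The step I expect to be the main obstacle is the negative-order (duality) endpoint: one must verify that the $\mu$-scaling is genuinely $\mu^{s_1 - s}$ and not something weaker, which requires exploiting that the profile $f$ is an $N$-fold derivative of a fixed compactly supported function (so that $|s_1|$ extra derivatives, for $|s_1| \le N$, can be shifted onto the test function without cost in $\mu$), together with the anisotropic fact that the $\mathbf{e}_r$ frame vector and the $r^{-1}$ weights contribute only $\nu$-powers. Care is also needed that the choice $s_1$ satisfies $|s_1| \le N$ (which is automatic since $s_1$ can be taken as a fixed negative integer just below $s$ and $N$ is large by \eqref{eq:def_large_N}) and that $\dot B^{-s_1}_{p',1}$ test functions are handled, not $\dot B^{-s_1}_{p',\infty}$ — a point where one uses the $\dot B^{s_1}_{p,\infty}$ (rather than $\dot B^{s_1}_{p,1}$) side of the interpolation so the dual exponent is $1$ on the $\ell^q$ slot, matching \eqref{eq:besov_duality}.
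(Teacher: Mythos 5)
Your proposal is correct and follows essentially the same route as the paper: interpolation via Lemma \ref{lemma:besov_interpolation} between a non-negative-order Sobolev endpoint (estimated directly as in Lemma \ref{lemma:estimates_overline_u}) and a negative-order endpoint handled by the duality \eqref{eq:besov_duality}, where repeated integration by parts against the anti-derivatives $f^{(-k)}$ (available because $f=\widetilde f^{(N)}$) gains a factor $\mu^{-1}$ per derivative transferred to the test function, with the frame vector and Jacobian weights contributing only harmless $\nu$-powers. The paper's specific endpoint choice is $s_1=-k_0$, $s_2=0$ (so the positive endpoint is just $L^p$), but this differs from yours only cosmetically.
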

\begin{proof}

Since $ - k_0 < s< 0 $, by Lemma \ref{lemma:besov_interpolation} it suffices to show  
\begin{equation}\label{eq:aux_lemma:est_u_0r_1}
| \overline{u}_r  \er |_{\dot{ B }^{   - k_0 }_{p,\infty }}  \lesssim \ep^2 \mu^{-  k_0  - s }  
\end{equation}
and
\begin{equation}\label{eq:aux_lemma:est_u_0r_2}
| \overline{u}_r  \er  |_{L^{p  }}  \lesssim \ep^2 \mu^{ - s }  .
\end{equation}

The $L^p$ bound \eqref{eq:aux_lemma:est_u_0r_2} follows directly from the definition of $ \overline{u}_r $, and we focus on the negative Besov bound \eqref{eq:aux_lemma:est_u_0r_1}.

To show  \eqref{eq:aux_lemma:est_u_0r_1}, we invoke the duality principle \eqref{eq:besov_duality}
\begin{equation}\label{eq:aux_lemma:est_u_0r_3}
| \overline{u}_r  \er |_{\dot{ B }^{   - k_0 }_{p, \infty  }} \sim  \sup_{  \phi \in Q^{k_0 }_{p',1}}  \Big| \int_{\RR^3} \overline{u}_r  \er \cdot \phi \, dx \Big| ,
\end{equation}
where
$Q^{k_0}_{p' ,1} : = \{  \phi  \in  \mathcal{S} (\RR^3): |\phi  |_{\dot{ B }^{     k_0 }_{p' ,1 }  }   \leq  1 \}$ with $ \mathcal{S}(\RR^3)$ the Schwartz class and $p' $ being the H\"older conjugate  of $p $.

For any given $  \phi  \in  \mathcal{S} (\RR^3)$, consider the integral on the right-hand side of \eqref{eq:aux_lemma:est_u_0r_3}. Integrating by parts in $z$, we have
\begin{equation}\label{eq:aux_lemma:est_u_0r_4}
\begin{aligned}
\int_{\RR^3} \overline{u}_r  \er    \cdot \phi \, dx & =  \ep^2 \mu^{-1+ \frac{2}{p}  -s } \nu^{ \frac{1}{p}   } \int_{\RR^3}        f    (  \mu  \rho  )    \er \cdot   \p_z \phi  \, dx \\
& : = \ep^2 \mu^{-1+ \frac{2}{p}  -s } \nu^{ \frac{1}{p}   }  I_\phi .
\end{aligned}
\end{equation}

To estimate the integral $I_\phi$, we will repeatedly integrate by parts in $  \rho  $.  To reduce notation, let us denote by $ f^{(-k)} \in C^\infty_c([\frac{1}{2} , \frac{3}{2}])$   the $k$-order anti-derivative of $f $ which exists up to $k=N \geq 100$ thanks to \eqref{eq:def_profiles}.

Using $\p_\rho \er =0$,   we  repeatedly integrate by parts $k_0 -1=5$ times in $\rho$ 
\begin{equation}\label{eq:aux_lemma:est_u_0r_5}
\begin{aligned} 
I_\phi &  = \int_{\RR^+ \times \TT \times\TT }        f    (  \mu  \rho  )    \er \cdot   \p_z \phi \, \, r \rho   d\rho d\varphi d \theta  \\
& = -\mu^{-k_0+1}\int_{\RR^+ \times \TT \times\TT }        f^{(-k_0+1)}   (  \mu  \rho  )    \er \cdot  \p_\rho^{k_0-1}  \big( r \rho  \p_z \phi   \big)  d\rho d\varphi d \theta .
\end{aligned}
\end{equation}
Since $\p_\rho r = \cos(\varphi)$, we have $\p_\rho^{k_0- 1 }  \big( \p_z \phi \, \, r \rho  \big) = r\rho \p_\rho^{k_0- 1 }   \p_z \phi + ( {k_0- 1 } ) ( \cos(\varphi) \rho + r )    \p_\rho^{ k_0- 2  }    \p_z \phi +  2C^2_{{k_0- 1 }} \cos(\varphi)       \p_\rho^{ k_0- 3  }  \p_z \phi    $.    

It follows from \eqref{eq:aux_lemma:est_u_0r_5} that $ I_\phi$ can be further decomposed into
\begin{equation}\label{eq:aux_lemma:est_u_0r_6}
\begin{aligned} 
I_\phi &  = - \mu^{-k_0+1} \big( I_1 + (k_0 - 1 ) I_2 + 2C^2_{{k_0- 1 }} I_3 \big)
\end{aligned}
\end{equation}
with 
\begin{align}
I_1 & = \int_{\RR^+ \times \TT \times\TT }        f^{(-k_0+1)}   (  \mu  \rho  )    \er \cdot     \p_\rho^{k_0-1}   \p_z \phi r\rho \,\,  d\rho d\varphi d \theta  \label{eq:aux_lemma:est_u_0r_7a} \\
I_2 & =   \int_{\RR^+ \times \TT \times\TT }        f^{(-k_0+1)}   (  \mu  \rho  )    \er \cdot      \p_\rho^{k_0-2}   \p_z \phi ( \cos(\varphi) \rho + r )  \,\,  d\rho d\varphi d \theta \label{eq:aux_lemma:est_u_0r_7b} \\
I_3 & =    \int_{\RR^+ \times \TT \times\TT }         f^{(-k_0+1)}   (  \mu  \rho  )    \er \cdot       \cos(\varphi)      \p_\rho^{k_0-3}   \p_z \phi  \,\,  d\rho d\varphi d \theta \label{eq:aux_lemma:est_u_0r_7c}.
\end{align}

For $I_1$, using  \eqref{eq:diff_rho_varphi_2} we obtain the point-wise bound 
\begin{equation}\label{eq:aux_lemma:est_u_0r_8}
\Big|  \p_\rho^{k_0-1}   \p_z \phi (x)\Big| \lesssim \Big|  \nabla^{k_0-1}   \p_z \phi (x)\Big| \leq \big|  \nabla^{k_0 }   \phi (x)\big| .
\end{equation}
Switching back to $\RR^3$,   and by \eqref{eq:aux_lemma:est_u_0r_8} we obtain that  
\begin{align} \label{eq:aux_lemma:est_u_0r_9}
|I_1|  \lesssim   \int_{\RR^3  }       \big|f^{(-k_0+1)}   (  \mu  \rho  )  \big|        \nabla^{k_0}\phi  \big|        dx.
\end{align}
Hence $I_1$  obeys the desired bound
\begin{align} \label{eq:aux_lemma:est_u_0r_10}
|I_1|  \lesssim  \mu^{  - \frac{2}{p}    } \nu^{-\frac{1}{p}}        \big|        \nabla^{k_0}\phi  \big|_{L^{{p'}} }   .
\end{align}

For $I_2$, we further integrate by parts in $\rho$ one more time to obtain
\begin{equation}\label{eq:aux_lemma:est_u_0r_11}
I_2   = -\mu^{-1} \big(I_{21} + 2 I_{22} \big)
\end{equation}
with 
\begin{align}
I_{21} & =      \int_{\RR^+ \times \TT \times\TT }        f^{(-k_0  )}   (  \mu  \rho  )    \er \cdot      \p_\rho^{k_0-1}   \p_z \phi ( \cos(\varphi) \rho + r )  \,\,  d\rho d\varphi d \theta \label{eq:aux_lemma:est_u_0r_12a} \\
I_{22}& =     \int_{\RR^+ \times \TT \times\TT }        f^{(-k_0  )}   (  \mu  \rho  )    \er \cdot      \p_\rho^{k_0-2}   \p_z \phi   \cos(\varphi)   \,\,  d\rho d\varphi d \theta \label{eq:aux_lemma:est_u_0r_12b}.
\end{align}

For $I_{21}$, we switch back to $\RR^3$ and use \eqref{eq:aux_lemma:est_u_0r_8} to obtain  
\begin{equation}\label{eq:aux_lemma:est_u_0r_13}
\begin{aligned} 
|I_{21} | & \lesssim  \int_{\RR^3 }     |   f^{(-k_0  )}   (  \mu  \rho  ) |   |   \nabla^{k_0} \phi|  \frac{\rho  + r }{\rho r}    \, dx \\
& \lesssim \mu^{ - \frac{2}{p}    } \nu^{ 1 -\frac{1}{p}}        \big|        \nabla^{k_0}\phi  \big|_{L^{{p'}} }   .
\end{aligned}
\end{equation}
For $I_{22}$, integrating by parts in $\rho$ once more
\begin{align}\label{eq:aux_lemma:est_u_0r_14}
I_{22}& =  -  \mu^{-  1 } \int_{\RR^+ \times \TT \times\TT }        f^{(-k_0 -1 )}   (  \mu  \rho  )    \er \cdot      \p_\rho^{k_0-1}   \p_z \phi   \cos(\varphi)   \,\,  d\rho d\varphi d \theta ,
\end{align}
and it follows from \eqref{eq:aux_lemma:est_u_0r_14} and \eqref{eq:aux_lemma:est_u_0r_8} that
\begin{equation}\label{eq:aux_lemma:est_u_0r_15}
\begin{aligned} 
|I_{22} | & \lesssim  \mu^{- 1  }\int_{\RR^3 }     |   f^{(-k_0 -1 )}   (  \mu  \rho  ) |   |   \nabla^{k_0} \phi|  \frac{1 }{\rho r}    \, dx  \\
& \lesssim    \mu^{  - \frac{2}{p}    } \nu^{ 1  -\frac{1}{p}}        \big|        \nabla^{k_0}\phi  \big|_{L^{{p'}} }   .
\end{aligned}
\end{equation}
Thus by \eqref{eq:aux_lemma:est_u_0r_11}, \eqref{eq:aux_lemma:est_u_0r_13}, and \eqref{eq:aux_lemma:est_u_0r_15} we have 
\begin{equation}\label{eq:aux_lemma:est_u_0r_16}
|I_2| \lesssim  \big( \mu^{- 1    } \nu \big)   \mu^{   - \frac{2}{p}    } \nu^{-\frac{1}{p}}        \big|        \nabla^{k_0}\phi  \big|_{L^{{p'}}(\RR^3)}   .
\end{equation}

Finally noting that $| I_3| \sim  \mu^{-1} | I_{22}| $, by \eqref{eq:aux_lemma:est_u_0r_4}, \eqref{eq:aux_lemma:est_u_0r_6}, \eqref{eq:aux_lemma:est_u_0r_10}, and \eqref{eq:aux_lemma:est_u_0r_16} we have  
\begin{equation}\label{eq:aux_lemma:est_u_0r_17} 
\begin{aligned} 
\Big| \int_{\RR^3} \overline{u}_r  \er    \cdot \phi \, dx\Big| & \lesssim  \ep^2 \mu^{-1+ \frac{2}{p}  -s } \nu^{ \frac{1}{p}   } | I_\phi| \\
& \lesssim  \ep^2 \mu^{-k_0 + \frac{2}{p}  -s } \nu^{ \frac{1}{p}   } (| I_1|+ | I_1|+ | I_3|) \\ 
& \lesssim \ep^2 \mu^{-k_0   }    \big|        \nabla^{k_0}\phi  \big|_{L^{{p'}} }    \\
&  \lesssim  \ep^2  \mu^{-k_0   }    \big|         \phi  \big|_{\dot{ B }^{     k_0 }_{{p'},1 }  }  .
\end{aligned}
\end{equation}
\end{proof}

Next, we show the same  upper bound for  $\overline{u}_z  \ez $ as $\overline{u}_r  \er $.

\begin{lemma}\label{lemma:est_u_0z}
Under the setting of Proposition \ref{prop:approximate_inflation_s<0}, there holds
\begin{align} \label{eq:est_u_0z} 
|  \overline{u}_z    \ez  |_{  \dot B^{s}_{p,1}      }   & \lesssim  \ep^{   2 } .
\end{align}
\end{lemma}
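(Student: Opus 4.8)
The plan is to follow the argument of Lemma~\ref{lemma:est_u_0r} almost verbatim, applied now to the two pieces of $\overline{u}_z\ez$: the principal part $\overline{u}_{z,p}\ez$ from \eqref{eq:def_overline_u_zp} and the divergence corrector $\overline{u}_{z,c}\ez$ from \eqref{eq:def_overline_u_zc}. Since $k_0=6>|s|+3$ we have $-k_0<s<0$, so by Lemma~\ref{lemma:besov_interpolation} with $s_1=-k_0$, $s_2=0$ together with the embedding $L^p\hookrightarrow\dot B^{0}_{p,\infty}$ from \eqref{eq:besov_embedding}, the claim \eqref{eq:est_u_0z} reduces to the two bounds
\begin{equation}
| \overline{u}_z \ez |_{\dot B^{-k_0}_{p,\infty}} \lesssim \ep^{2}\mu^{-k_0-s}, \qquad | \overline{u}_z \ez |_{L^{p}} \lesssim \ep^{2}\mu^{-s},
\end{equation}
the $\mu$-exponent produced by the interpolation cancelling exactly (just as in \eqref{eq:aux_lemma:est_u_0r_1}--\eqref{eq:aux_lemma:est_u_0r_2}). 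The $L^p$ bound is immediate: $\overline{u}$ is supported where $\rho\sim\mu^{-1}$ and $r\sim\nu^{-1}$, a set of volume $\sim\mu^{-2}\nu^{-1}$, on which $|\overline{u}_{z,p}|\lesssim_{\ep}\mu^{\frac{2}{p}-s}\nu^{\frac{1}{p}}$ and $|\overline{u}_{z,c}|\lesssim_{\ep}\mu^{\frac{2}{p}-s}\nu^{\frac{1}{p}}(\mu^{-1}\nu)$, so H\"older together with $\mu^{-1}\nu=\mu^{-b}\le 1$ (from \eqref{eq:def_small_b}, \eqref{eq:def_nu_lambda}) gives $|\overline{u}_z\ez|_{L^p}\lesssim_{\ep}\ep^{2}\mu^{-s}$.

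For the negative Besov bound I would use the duality \eqref{eq:besov_duality}, which reduces matters to showing $\big| \int_{\RR^3} \overline{u}_z\ez\cdot\phi\,dx \big|\lesssim\ep^{2}\mu^{-k_0-s}$ for every $\phi\in Q^{k_0}_{p',1}$, and treat the two pieces separately. For the principal part, pass to the toroidal coordinates $(\rho,\varphi,\theta)$; using $\ez\cdot\phi=\phi_3$ and $\p_r\rho=\cos\varphi$ one has
\begin{equation}
\int_{\RR^3}\overline{u}_{z,p}\ez\cdot\phi\,dx = \ep^{2}\mu^{\frac{2}{p}-s}\nu^{\frac{1}{p}}\int_{\RR^{+}\times\TT\times\TT} f'(\mu\rho)\,\cos\varphi\,\phi_3\,r\rho\,d\rho\,d\varphi\,d\theta .
\end{equation}
Since $\cos\varphi$ is $\rho$-independent and $f$, a fortiori $f'$, admits smooth compactly supported antiderivatives of order up to $N\gg k_0$ by \eqref{eq:def_profiles}, I would integrate by parts $k_0$ times in $\rho$ exactly as in \eqref{eq:aux_lemma:est_u_0r_5}--\eqref{eq:aux_lemma:est_u_0r_16}. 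This produces the factor $\mu^{-k_0}f^{(-k_0+1)}(\mu\rho)$, moves all $k_0$ derivatives onto $\phi_3\,r\rho$, and — via $\p_\rho(r\rho)=\cos\varphi\,\rho+r$ and $\p_\rho^{2}(r\rho)=2\cos\varphi$ — yields a leading term with $\p_\rho^{k_0}\phi_3$ plus finitely many lower-order terms with $\p_\rho^{k_0-1}\phi_3$ or $\p_\rho^{k_0-2}\phi_3$ carrying polynomial weights in $(\rho,r)$, which are integrated by parts once or twice more in $\rho$ so that all $k_0$ derivatives fall on $\phi$. Bounding $|\p_\rho^{k_0}\phi_3|\le|\nabla^{k_0}\phi|$ by \eqref{eq:diff_rho_varphi_2}, returning to $\RR^3$, and applying H\"older on the support together with $|\nabla^{k_0}\phi|_{L^{p'}}\lesssim|\phi|_{\dot B^{k_0}_{p',1}}\le 1$, one obtains $\big|\int \overline{u}_{z,p}\ez\cdot\phi\,dx\big|\lesssim\ep^{2}\mu^{\frac{2}{p}-s}\nu^{\frac{1}{p}}\cdot\mu^{-k_0}\cdot\mu^{-\frac{2}{p}}\nu^{-\frac{1}{p}}=\ep^{2}\mu^{-k_0-s}$.

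For the corrector the computation is the same, and only cheaper. In the toroidal coordinates the weight $1/r$ cancels part of the Jacobian, so
\begin{equation}
\int_{\RR^3}\overline{u}_{z,c}\ez\cdot\phi\,dx = \ep^{2}\mu^{-1+\frac{2}{p}-s}\nu^{\frac{1}{p}}\int_{\RR^{+}\times\TT\times\TT} f(\mu\rho)\,\phi_3\,\rho\,d\rho\,d\varphi\,d\theta ,
\end{equation}
and integrating by parts $k_0$ times in $\rho$ (valid since $N\ge k_0$) picks up $\mu^{-k_0}f^{(-k_0)}(\mu\rho)$ and, after returning to $\RR^3$, a net weight of size $\lesssim\nu$; H\"older then gives $\big|\int\overline{u}_{z,c}\ez\cdot\phi\,dx\big|\lesssim\ep^{2}\mu^{-k_0-s}(\mu^{-1}\nu)\lesssim\ep^{2}\mu^{-k_0-s}$. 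Summing the two contributions yields the negative Besov bound and hence \eqref{eq:est_u_0z}.

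The main obstacle, already present in Lemma~\ref{lemma:est_u_0r}, is the bookkeeping of the lower-order integration-by-parts terms: one must verify that every additional integration by parts needed to push all $k_0$ derivatives onto $\phi$ is profitable, i.e. that the factor $\mu^{-1}$ it gains dominates the factor $\lesssim\nu$ that may be lost when a $\rho$-derivative hits a polynomial weight such as $r\rho$ or $r$ (and that the spurious factors $1/\rho\sim\mu$ arising when one returns to $\RR^3$ are likewise compensated). This is where the relation $\nu=\mu^{1-b}$, equivalently $\mu^{-1}\nu=\mu^{-b}\ll 1$, is used. A secondary subtlety is that the corrector $\overline{u}_{z,c}$ carries an a priori dangerous weight $1/r\sim\nu$, but this is absorbed by the extra power $\mu^{-1}$ in its amplitude relative to $\overline{u}_{z,p}$.
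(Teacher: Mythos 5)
Your proposal is correct, and the core machinery is the same as the paper's: reduce to an $L^p$ bound plus a $\dot B^{-k_0}_{p,\infty}$ bound via Lemma \ref{lemma:besov_interpolation}, then prove the latter by duality \eqref{eq:besov_duality} and repeated integration by parts in $\rho$ against the antiderivatives $f^{(-k)}$ guaranteed by \eqref{eq:def_profiles}. The one genuine difference is organizational: the paper does not estimate $\overline{u}_{z,p}\ez$ and $\overline{u}_{z,c}\ez$ separately. Instead it performs a single integration by parts in $r$ on the principal part, writes $\int\overline{u}_{z,p}\ez\cdot\phi\,dx=-\ep^2\mu^{-1+\frac{2}{p}-s}\nu^{\frac{1}{p}}\int f(\mu\rho)\,\ez\cdot(r\p_r\phi+\phi)\,dr\,d\theta\,dz$, and observes that the $\phi$ term cancels \emph{exactly} against the corrector contribution \eqref{eq:aux_lemma:est_u_0z_3} (the $1/r$ in $\overline{u}_{z,c}$ cancels the Jacobian). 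What survives is \eqref{eq:aux_lemma:est_u_0z_4}, which is structurally identical to \eqref{eq:aux_lemma:est_u_0r_4} with $\p_r\phi$ in place of $\p_z\phi$, so Lemma \ref{lemma:est_u_0r} applies verbatim via $|\p_\rho^k\p_r\phi|\le|\nabla^{k+1}\phi|$. Your route forgoes this cancellation and instead verifies that the corrector is admissible on its own, which works precisely because its amplitude carries the extra $\mu^{-1}$ that absorbs the $1/r\sim\nu$ weight ($\mu^{-1}\nu=\mu^{-b}\le1$) — a point you correctly flag. The trade-off is that you must redo the integration-by-parts bookkeeping for each piece (including the break-even $1/\rho\sim\mu$ losses, which you also identify), whereas the paper's cancellation lets it quote the previous lemma wholesale; both close.
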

\begin{proof}

To establish the same bounds for $\overline{u}_z  \ez  = \overline{u}_{z,p}  \ez + \overline{u}_{z,c}  \ez $, we use the same strategy as Lemma \ref{lemma:est_u_0r}. Let   $\phi  \in  \mathcal{S} (\RR^3)$ and consider the integral
\begin{equation}\label{eq:aux_lemma:est_u_0z_1} 
\int_{\RR^3} \overline{u}_z  \ez \cdot \phi \, dx .
\end{equation}

For $\overline{u}_{z,p}\ez $, we first   switch to cylindrical coordinates and integrate by parts
\begin{equation}\label{eq:aux_lemma:est_u_0z_2} 
\begin{aligned} 
& \int_{\RR^3} \overline{u}_{z,p}  \ez   \cdot \phi \, dx  =  \ep^2 \mu^{-1+ \frac{2}{p}  -s } \nu^{ \frac{1}{p}   } \int     \p_r \big(  f    (  \mu  \rho  ) \big)        \ez \cdot     \phi  \, r dr d\theta dz   \\
& = - \ep^2 \mu^{-1+ \frac{2}{p}  -s } \nu^{ \frac{1}{p}   } \int     f    (  \mu  \rho  )        \ez \cdot  \big( r  \p_r \phi + \phi \big)    \,   dr d\theta dz  .
\end{aligned}
\end{equation}
Similarly,  by the definition of $\overline{u}_{z,p}$ we obtain 
\begin{equation} \label{eq:aux_lemma:est_u_0z_3} 
\begin{aligned} 
\int_{\RR^3} \overline{u}_{z,c}  \ez   \cdot \phi \, dx & =  \ep^2 \mu^{-1+ \frac{2}{p}  -s } \nu^{ \frac{1}{p}   } \int       f    (  \mu  \rho  )         \ez \cdot     \phi  \,   dr d\theta dz   .
\end{aligned}
\end{equation}
So adding up \eqref{eq:aux_lemma:est_u_0z_1}  and \eqref{eq:aux_lemma:est_u_0z_2} , we need to estimate
\begin{equation} \label{eq:aux_lemma:est_u_0z_4} 
\begin{aligned} 
\int_{\RR^3} \overline{u}_{z }  \ez   \cdot \phi \, dx & = - \ep^2 \mu^{-1+ \frac{2}{p}  -s } \nu^{ \frac{1}{p}   } \int     f    (  \mu  \rho  )         \ez \cdot     \p_r \phi  \,  r   dr d\theta dz   \\
& : = - \ep^2 \mu^{-1+ \frac{2}{p}  -s } \nu^{ \frac{1}{p}   }  I_\phi .
\end{aligned}
\end{equation}

This integral $ I_\phi$ can be estimated identically as in Lemma \ref{lemma:est_u_0r} for $ \overline{u}_{r }  \er  $ by noting  that
\begin{equation}
\Big|  \p_\rho^{k }   \p_r \phi (x)\Big| \leq \Big|  \nabla^{k }_{r,z}   \p_r \phi (x)\Big| \leq \big|  \nabla^{k+1 }   \phi (x)\big| \quad \text{for any $k\in \NN$},
\end{equation}
and we have the same estimate as $  \overline{u}_{r }$
\begin{equation} \label{eq:aux_lemma:est_u_0z_5} 
\begin{aligned} 
\Big|  \int_{\RR^3} \overline{u}_{z }  \ez   \cdot \phi \, dx \Big|   
&\lesssim  \ep^2 \mu^{-k_0    -s } |    \phi |_{ \dot{ B }^{     k_0 }_{{p'},1 }    } .
\end{aligned}
\end{equation}

\end{proof}

\subsection{Upper bounds for \texorpdfstring{$   {u}_{0,\theta}  \et$}{u0}}

In the rest of this section, we  focus on the  azimuthal part $  {u}_{0,\theta}  \et$.

\begin{lemma}\label{lemma:est_u_0theta}
Under the setting of Proposition \ref{prop:approximate_inflation_s<0}, there holds
\begin{align}\label{eq:lemma:est_u_0theta}
|  u_{0,\theta}  \et   |_{ \dot B^{s}_{p,1}     }  & \lesssim  \ep^{   2 } . 
\end{align}
\end{lemma}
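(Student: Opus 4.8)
The plan is to follow the duality-and-integration-by-parts strategy of Lemmas~\ref{lemma:est_u_0r} and~\ref{lemma:est_u_0z}, with two differences: here it is the explicit $\p_\rho^{k_0}$ appearing in the definition \eqref{eq:def_overline_u_theta0} of $u_{0,\theta}$ that we undo by integrating by parts (rather than building up anti-derivatives of $f$), and the rapidly oscillating phase $\sin(\varphi+\zeta(\rho))$, with $|\zeta^{(j)}(\rho)|\sim\ep^{-N}\mu^{j}$ on the support $\{\mu\rho\in[1,\tfrac32]\}$ of $g(\mu\rho)$, supplies whatever extra smallness is needed. Heuristically $u_{0,\theta}\et$ is concentrated at spatial frequency $\Lambda:=\ep^{-N}\mu$ with $|u_{0,\theta}\et|_{L^p}\sim\ep^{-2}\mu^{-s}$, so that $|u_{0,\theta}\et|_{\dot B^{s}_{p,1}}\sim\Lambda^{s}\ep^{-2}\mu^{-s}=\ep^{N|s|-2}$, which is small since $s<0$. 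Writing $u_{0,\theta}=C\,\p_\rho^{k_0}\big[\sin(\varphi+\zeta(\rho))g(\mu\rho)\big]$ with $C=\ep^{-2+k_0N}\mu^{2/p-s-k_0}\nu^{1/p}$, I would first reduce \eqref{eq:lemma:est_u_0theta} to two estimates, using Lemma~\ref{lemma:besov_interpolation} (with $s_1=-k_0<s<s_2=0$, exponent $\alpha=|s|/k_0\in(0,1)$; note $-k_0<-3<s<0$) together with the embedding $L^p\hookrightarrow\dot B^{0}_{p,\infty}$ of \eqref{eq:besov_embedding}: the low-regularity bound $|u_{0,\theta}\et|_{L^p}\lesssim\ep^{-2}\mu^{-s}$ and the negative-regularity bound $|u_{0,\theta}\et|_{\dot B^{-k_0}_{p,\infty}}\lesssim\ep^{-2+k_0N}\mu^{-s-k_0}$. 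Interpolating these gives $|u_{0,\theta}\et|_{\dot B^{s}_{p,1}}\lesssim\ep^{-2+N|s|}$, which is $\lesssim\ep^{2}$ for $\ep$ small since $N|s|$ is large by \eqref{eq:def_large_N}.

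The $L^p$ bound is immediate: by Fa\`a di Bruno the leading term of $\p_\rho^{k_0}[\sin(\varphi+\zeta)g(\mu\rho)]$ is $\sin^{(k_0)}(\varphi+\zeta)(\zeta')^{k_0}g(\mu\rho)$, of size $\sim(\ep^{-N}\mu)^{k_0}$ in $L^\infty$, and since $u_{0,\theta}$ lives on a set of volume $\sim\mu^{-2}\nu^{-1}$ and $|\et|=1$ one gets $|u_{0,\theta}\et|_{L^p}=|u_{0,\theta}|_{L^p}\lesssim C(\ep^{-N}\mu)^{k_0}\mu^{-2/p}\nu^{-1/p}=\ep^{-2}\mu^{-s}$. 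For the negative-regularity bound I would use the duality \eqref{eq:besov_duality} and estimate $\big|\int_{\RR^3}u_{0,\theta}\et\cdot\phi\,dx\big|$ uniformly over $\phi\in Q^{k_0}_{p',1}$: pass to $(\rho,\varphi,\theta)$ coordinates (so $dx=r\rho\,d\rho\,d\varphi\,d\theta$), use $\p_\rho\et=0$, and integrate by parts $k_0$ times in $\rho$ (boundary terms vanish since $g(\mu\rho)$ and all its $\rho$-derivatives vanish at the ends of its support). The main term, with all $k_0$ derivatives on $\phi$, is $\pm C\int_{\RR^3}\sin(\varphi+\zeta)g(\mu\rho)(\et\cdot\p_\rho^{k_0}\phi)\,dx$, which by H\"older, the pointwise bound $|\p_\rho^{k_0}\phi|\le|\nabla^{k_0}\phi|$ from \eqref{eq:diff_rho_varphi_2}, the embedding \eqref{eq:besov_embedding}, and $|\sin(\varphi+\zeta)g(\mu\rho)|_{L^p}\sim\mu^{-2/p}\nu^{-1/p}$ is $\lesssim C\mu^{-2/p}\nu^{-1/p}|\phi|_{\dot B^{k_0}_{p',1}}\le C\mu^{-2/p}\nu^{-1/p}=\ep^{-2+k_0N}\mu^{-s-k_0}$, as needed.

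The main obstacle I foresee is the correction terms produced when some of the $k_0$ derivatives land on the Jacobian factor $r\rho$ rather than on $\phi$ (recall $\p_\rho r=\cos\varphi$, so $\p_\rho(r\rho)=\rho\cos\varphi+r$, $\p_\rho^2(r\rho)=2\cos\varphi$, and $\p_\rho^{j}(r\rho)=0$ for $j\ge3$). These carry only $m\le k_0-1$ derivatives on $\phi$ together with a bounded smooth coefficient whose $\RR^3$-form is $\lesssim\mu$ on the support (using $r\sim\nu^{-1}$, $\rho\sim\mu^{-1}$), and — since the homogeneous norm $|\phi|_{\dot B^{k_0}_{p',1}}$ controls only $|\nabla^{k_0}\phi|_{L^{p'}}$, not lower derivatives — they cannot be bounded as they stand. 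My plan is to handle each by $k_0-m$ further integrations by parts in $\rho$, each time writing $\sin(\varphi+\zeta)=-\zeta'(\rho)^{-1}\p_\rho\cos(\varphi+\zeta)$ (or the analogue for $\cos$) to put a $\p_\rho$ back onto $\phi$; since $|\zeta'(\rho)^{-1}|\sim\ep^{N}\mu^{-1}$ while the accumulated coefficients cost at most $\mu$ per step, every such step improves the estimate by a factor $\lesssim\ep^{N}\ll1$, and the procedure terminates after at most $\sim k_0\ll N$ steps (so the inexhaustible supply of derivatives in the oscillation plays the role that the finite reservoir $f^{(-N)}$ plays in Lemmas~\ref{lemma:est_u_0r}--\ref{lemma:est_u_0z}). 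Thus all correction terms are strictly dominated by the main term, which closes the negative-regularity bound and hence the lemma; the remaining bookkeeping is entirely parallel to Lemmas~\ref{lemma:est_u_0r} and~\ref{lemma:est_u_0z}.
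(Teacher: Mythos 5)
Your overall strategy is the same as the paper's: reduce to an $L^p$ bound plus a negative-order bound via Lemma~\ref{lemma:besov_interpolation}, and obtain the negative-order bound by duality \eqref{eq:besov_duality} and repeated integration by parts in $\rho$, with the oscillation $\sin(\varphi+\zeta(\rho))$, $|\zeta^{(j)}|\sim\ep^{-N}\mu^{j}$, supplying the gain. The $L^p$ endpoint and the main term of the duality pairing are handled correctly. The problem is in your treatment of the correction terms, i.e.\ the terms where some of the $k_0$ integrations by parts land on the Jacobian $r\rho$ and leave only $\p_\rho^{m}\phi$ with $m\le k_0-1$. Your proposed fix — write $\sin(\varphi+\zeta)=-\zeta'(\rho)^{-1}\p_\rho\cos(\varphi+\zeta)$ and integrate by parts to push a derivative back onto $\phi$ — does \emph{not} terminate after $\sim k_0$ steps as you claim. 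At each such step the new $\p_\rho$ may land not on $\phi$ but on $g(\mu\rho)$ or on $\zeta'(\rho)^{-1}=-\ep^{N}\mu\rho^{2}$; those branches regenerate a term with the \emph{same} deficient $\p_\rho^{m}\phi$, only multiplied by $\ep^{N}$. Since the homogeneous norm $|\phi|_{\dot B^{k_0}_{p',1}}$ gives no control whatsoever on $|\p_\rho^{m}\phi|_{L^{p'}}$ for $m<k_0$ (add a low-frequency bump to $\phi$), no finite truncation of your iteration closes the estimate: the prefactor $\ep^{jN}$ is small but multiplies a quantity that is not bounded on the dual ball. The argument can be repaired — sum the geometric tree of ``done'' terms (bounded branching, gain $\ep^{N}$ per level) and show the depth-$j$ remainder tends to $0$ for each fixed Schwartz $\phi$ — but that limiting argument is an additional idea you have not supplied, and the statement ``the procedure terminates'' is false as written.

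The paper sidesteps this entirely by \emph{not} exhausting the oscillation at the first pass: it integrates by parts only $k_1=k_0-2$ times, leaving $\p_\rho^{2}\big[\sin(\varphi+\zeta)g(\mu\rho)\big]$ in the integrand, and uses the interpolation endpoints $(-k_1,0)$ instead of your $(-k_0,0)$ (both exceed $|s|$, so either choice is admissible for Lemma~\ref{lemma:besov_interpolation}). Because $\p_\rho^{3}(r\rho)=0$, the correction terms are deficient by at most $2$ derivatives on $\phi$, and this deficit is exactly covered by the $2$ derivatives held in reserve on the oscillating factor: one then integrates by parts \emph{forward}, moving derivatives off $\p_\rho^{j}(\sin(\varphi+\zeta)g)$ onto $\p_\rho^{m}\phi$. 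These moves terminate cleanly because the remaining coefficients ($\cos\varphi$ and $\et$) are $\rho$-independent, and each move trades a factor $\ep^{-N}\mu$ for a harmless derivative on $\phi$, which is why $I_2,I_3$ end up smaller than $I_1$ by $\ep^{N}\mu^{-1}\nu$. I recommend adopting this ``keep two derivatives in reserve'' bookkeeping; with it, the rest of your outline (the $L^p$ bound, the size $\ep^{-2+k_0N}\mu^{-s-k_0}$ of the main term after adjusting $k_0$ to $k_1$, and the final interpolation giving $\ep^{N|s|-2}$) matches the paper.
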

\begin{proof}

For brevity of notation, let us denote $k_1 = k_0 -2 = 4$. Since also $k_1 > |s|+1$, it suffices to estimate the $L^p$ and $ \dot{ B }^{     -k_1 }_{p , 1  } $ norm and then use Lemma \ref{lemma:besov_interpolation} to interpolate.

From the definition of $u_{0,\theta} $ \eqref{eq:def_overline_u_theta0} we obtain immediately
\begin{equation}\label{aux:lemma:est_u_0theta_1}
|  u_{0,\theta}  \et   |_{ L^{p} (\RR^3)    } \lesssim \ep^{-2} \mu^{-s}
\end{equation}
to prove \eqref{eq:lemma:est_u_0theta}, it suffices to prove
\begin{align}\label{aux:lemma:est_u_0theta_1a}
|  u_{0,\theta}  \et   |_{ \dot B^{-k_1 }_{p,1}     }  & \lesssim  \ep^{  -2 +k_1 N } \mu^{-k_1 - s} .
\end{align}
Indeed,  since $-k_1 <s <0$, by Lemma \ref{lemma:besov_interpolation}, \eqref{aux:lemma:est_u_0theta_1} and \eqref{aux:lemma:est_u_0theta_1a} imply that
\begin{align} \label{aux:lemma:est_u_0theta_2}
|  u_{0,\theta}  \et   |_{ \dot B^{s }_{p,1}     }  & \lesssim \big( \ep^{  -2  } \big)^{ 1+  \frac{s}{k_1}}   \big( \ep^{  -2 +k_1  N } \big)^{ -\frac{s}{k_1}}  \leq \ep^{ -2+     N s  } \leq \ep^{  2}  ,
\end{align}
where we have used that $N s \geq 100$.

From now on we focus on proving \eqref{aux:lemma:est_u_0theta_1a}. To this end, let   $\phi  \in  \mathcal{S} (\RR^3)$ and consider the integral
\begin{equation} 
\int_{\RR^3}u_{0,\theta}  \et  \cdot \phi \, dx .
\end{equation}

For simplicity denote by $ \zeta  (\rho ) = t^*\ep^2 \mu^{ s   +\frac{2}{p } } \nu^{ \frac{1}{p}} \rho^{-1}    =  \ep^{ -N}  \mu^{  -1 }   \rho^{-1}   $. Recalling the definition of ${u}_{0,\theta}$ from \eqref{eq:def_overline_u_theta0}
and   integrating by parts $k_1=4$ times in $\rho$, we have
\begin{align}
& \int_{\RR^3} u_{0,\theta} \et \cdot \phi \, dx \\
&   =     \ep^{-2 + k_0 N }\mu^{\frac{2}{p } -  s - k_0   } \nu^{ \frac{1}{p}}   
\int_{\RR^+ \times \TT \times \TT }  \p_\rho^{2} \left(   \sin(\varphi + \zeta  (\rho )   ) g (\mu \rho )  \right)   \p_\rho^{k_1 } \Big(  \et \cdot     \phi r \rho \Big) \, d\rho d\varphi d \theta  . \label{aux:lemma:est_u_0theta_3}
\end{align}
Using $  \p_\rho^2 (\phi r ) = 2\cos(\varphi)  $ we get  $ \p_\rho^{k_1 } \Big(  \et \cdot     \phi r \rho \Big) =     \et \cdot   \p_\rho^{k_1}    \phi r \rho  + k_1  \et \cdot   \p_\rho^{k_1-1}    \phi (r + \rho \cos(\varphi) )  + 2 C_{k_1}^2 \et \cdot   \p_\rho^{k_1-2}    \phi  \cos(\varphi) $
and so we can decompose the integral into three parts:
\begin{equation}\label{aux:lemma:est_u_0theta_4}
\int_{\RR^3} u_{0,\theta} \et \cdot \phi \, dx = \ep^{-2  + k_0 N } \mu^{\frac{2}{p } -  s - k_0   } \nu^{ \frac{1}{p}} \Big( I_1 + k_1 I_2 +2 C^2_{k_1}  I_3 \Big)
\end{equation}
with 
\begin{align}
I_1 & =  \int_{\RR^+ \times \TT \times \TT }  \p_\rho^{2} \left(   \sin(\varphi + \zeta  (\rho )   ) g (\mu \rho )  \right)   \et \cdot   \p_\rho^{k_1}    \phi r \rho \, d\rho d\varphi d \theta  \label{aux:lemma:est_u_0theta_5a}\\
I_2 & =  \int_{\RR^+ \times \TT \times \TT }  \p_\rho^{2} \left(   \sin(\varphi + \zeta  (\rho )   ) g (\mu \rho )  \right)   \et \cdot   \p_\rho^{k_1 -1}    \phi (r + \rho \cos(\varphi) )  \, d\rho d\varphi d \theta  \label{aux:lemma:est_u_0theta_5b}\\
I_3 & =  \int_{\RR^+ \times \TT \times \TT }  \p_\rho^{2} \left(   \sin(\varphi + \zeta  (\rho )   ) g (\mu \rho )  \right)   \et \cdot   \p_\rho^{k_1 -2}    \phi  \cos(\varphi)    \, d\rho d\varphi d \theta  . \label{aux:lemma:est_u_0theta_5c}
\end{align}
For $ I_1$, we can estimate directly and obtain
\begin{equation}\label{aux:lemma:est_u_0theta_6}
|I_1| \lesssim \left| \p_\rho^{2} \left(   \sin(\varphi + \zeta  (\rho )   ) g (\mu \rho )  \right)   \right|_{L^p(\RR^3)} | \p_\rho^{k_1}    \phi |_{L^{{p'}} } .
\end{equation}
The first part of \eqref{aux:lemma:est_u_0theta_6} can be estiamted by Lemma \ref{lemma:G_estimates}, yielding 
\begin{equation}\label{aux:lemma:est_u_0theta_6a}
\left| \p_\rho^{2} \left(   \sin(\varphi + \zeta  (\rho )   ) g (\mu \rho )  \right)   \right|_{L^p(\RR^3)} \lesssim (\ep^{-2 N } \mu^{2})  (\mu^{- \frac{2}{p} } \nu^{- \frac{1}{p} } )    .
\end{equation} 
And hence by  \eqref{eq:diff_rho_varphi_2}
\begin{equation}\label{aux:lemma:est_u_0theta_7}
|I_1| \lesssim (\ep^{-2 N } \mu^{2})  (\mu^{- \frac{2}{p} } \nu^{- \frac{1}{p} } )   |   \phi |_{\dot B^{k_1}_{{p'},1} } .
\end{equation}

For $I_{2}$ we integrate by part once more in $\rho$ to further split it as
\begin{align}\label{aux:lemma:est_u_0theta_8}
I_2 = -  (I_{21} + I_{22}),
\end{align}
with 
\begin{align}
I_{21}   & =   \int_{\RR^+ \times \TT \times \TT }  \p_\rho  \left(   \sin(\varphi + \zeta  (\rho )   ) g (\mu \rho )  \right)   \et \cdot   \p_\rho^{k_1  }    \phi (r + \rho \cos(\varphi) )  \, d\rho d\varphi d \theta \label{aux:lemma:est_u_0theta_9a} \\
I_{22}   & =   \int_{\RR^+ \times \TT \times \TT }  \p_\rho  \left(   \sin(\varphi + \zeta  (\rho )   ) g (\mu \rho )  \right)   \et \cdot   \p_\rho^{k_1-1  }    \phi \cos(\varphi)    \, d\rho d\varphi d \theta  . \label{aux:lemma:est_u_0theta_9b}
\end{align}
We bound $I_{21}$ by absolute value of the integrand, obtaining
\begin{equation}\label{aux:lemma:est_u_0theta_10}
\begin{aligned}
|I_{21} | & \lesssim \int_{\RR^3 } \big|\p_\rho  \left(   \sin(\varphi + \zeta  (\rho )   ) g (\mu \rho )  \right)  \big|     \big|  \p_\rho^{k_1  }    \phi \big|  (r + \rho   )  \,  \rho^{-1} r^{-1} \, d x \\
& \lesssim  \nu \left| \p_\rho  \left(   \sin(\varphi + \zeta  (\rho )   ) g (\mu \rho )  \right)      \right|_{L^p } | \p_\rho^{k_1}    \phi |_{L^{{p'}} }
\end{aligned}
\end{equation}
By Lemma \ref{lemma:G_estimates} and \eqref{eq:diff_rho_varphi_2} again, it follows that
\begin{equation}\label{aux:lemma:est_u_0theta_11}
\begin{aligned}
|I_{21} | &   \lesssim  \nu \left| \p_\rho  \left(   \sin(\varphi + \zeta  (\rho )   ) g (\mu \rho )  \right)      \right|_{L^p } |    \phi |_{\dot B^{k_1}_{{p'}, 1 } } \\
&   \lesssim  \ep^{-N} \mu^{ 1 - \frac{2}{p}}  \nu^{ 1 - \frac{1}{p}}   |    \phi |_{\dot B^{k_1}_{{p'}, 1 } } .
\end{aligned}
\end{equation}

For $I_{22}$ we integrate by parts once more and then bound it by the absolute value
\begin{align}
| I_{22} | & \leq    \Big| \int_{\RR^+ \times \TT \times \TT }       \sin(\varphi + \zeta  (\rho )   ) g (\mu \rho )     \et \cdot   \p_\rho^{k_1   }    \phi \cos(\varphi)    \, d\rho d\varphi d \theta  \Big| \\
& \lesssim   \int_{\RR^3 }        \big| g (\mu \rho )   \big|    \big|  \nabla^{k_1 }   \phi     \big| (r \rho)^{-1} \, dx . \label{aux:lemma:est_u_0theta_13}
\end{align}
It follows that
\begin{align}\label{aux:lemma:est_u_0theta_14}
| I_{22} |    \lesssim \mu \nu     \big| g (\mu \rho )   \big|_{L^p(\RR^3)}    \big|  \nabla^{k_1 }      \phi     \big|_{L^{{q}} }   \lesssim \mu^{ 1  - \frac{2}{p} } \nu^{1  - \frac{1}{p} }    |  \nabla^{k_1 }      \phi      |_{ \dot B^{k_1}_{{p'} , 1} }  .
\end{align}

Putting together \eqref{aux:lemma:est_u_0theta_11} and \eqref{aux:lemma:est_u_0theta_14} we have
\begin{equation}\label{aux:lemma:est_u_0theta_14b}
| I_{2 } |    \lesssim \ep^{-N} \mu^{  1 - \frac{2}{p}}  \nu^{ 1 - \frac{1}{p}}   |    \phi |_{\dot B^{k_1}_{{p'}, 1 } } .  
\end{equation}

Finally noting that $|I_3| \sim |I_{22}|$, we combine the estimates for $I_i$'s to obtain
\begin{equation}\label{aux:lemma:est_u_0theta_15}
\begin{aligned}
& \Big| \int_{\RR^3} u_{0,\theta} \et \cdot \phi \, dx  \Big| \\
& \lesssim   \ep^{-2  +k_0 N  } \mu^{\frac{2}{p } -  s - k_0   } \nu^{ \frac{1}{p}}   \Big(  \ep^{-2 N} \mu^{ 2 - \frac{2}{p} } \nu^{ - \frac{1}{p} } + \ep^{-N} \mu^{  1 - \frac{2}{p}}  \nu^{ 1 - \frac{1}{p}}    \Big)  |  \nabla^{k_1 }      \phi      |_{ \dot B^{k_1}_{{p'} , 1} } \\ 
& \lesssim   \ep^{-2 +k_1 N  } \mu^{  -  s - k_1   }           |  \nabla^{k_1 }      \phi      |_{ \dot B^{k_1}_{{p'} , 1} } 
\end{aligned}
\end{equation}
where in the third inequality we assumed (with loss of generality) $\ep<1$ and used  $\mu \geq \nu $.

\end{proof}

\subsection{Lower bounds for \texorpdfstring{$ \overline{u}_{ \theta} (t^*) \et $}{ut}}

Next, we show the growth of $\overline{u}_{ \theta} $ at time $t^*$. The method of proving a lower bound is reminiscent of Lemma \ref{lemma:G_estimates}.

\begin{lemma}\label{lemma:est_u_theta_t}
Under the setting of Proposition \ref{prop:approximate_inflation_s<0}, there exists $\ep_0 = \ep_0(s,p)>0$ such that if $ 0< \ep \leq \ep_0$, then  
\begin{align}\label{eq:lemma:est_u_theta_t}
|  \overline{u}_{\theta}(t^*)  \et   |_{ \dot B^{s}_{p,1}     }  & \gtrsim  \ep^{  - 2 }  .
\end{align}
\end{lemma}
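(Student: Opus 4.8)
The plan is to first derive an explicit formula for $\overline{u}_\theta(t^*)$ exhibiting the ``un-mixing,'' then isolate its leading profile, and finally bound $|\overline{u}_\theta(t^*)\et|_{\dot B^s_{p,1}}$ from below by testing against one well-chosen function via the duality \eqref{eq:besov_duality}.

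\textbf{Step 1 (un-mixed formula).} Since $\overline{u}_\theta$ solves the transport equation \eqref{eq:def_overline_u_theta} and stays supported where $g(\mu\rho)\neq0$ — on which $f'(\mu\rho)\equiv1$ by \eqref{eq:def_profiles} — formula \eqref{eq:stationary_2Deuler_b} of Lemma \ref{lemma:overline_u_2D_Euler} shows the transport operator reduces there to $\ep^2\mu^{2/p-s}\nu^{1/p}\rho^{-1}\p_\varphi$, whose flow fixes $\rho$ and rotates $\varphi$. Hence $\overline{u}_\theta(t,\rho,\varphi)=u_{0,\theta}\bigl(\rho,\varphi-t\,\ep^2\mu^{2/p-s}\nu^{1/p}\rho^{-1}\bigr)$, and by the choice of $t^*$ in \eqref{eq:def_critical_t*} we have $t^*\,\ep^2\mu^{2/p-s}\nu^{1/p}\rho^{-1}=\ep^{-N}\mu^{-1}\rho^{-1}=\zeta(\rho)$. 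Thus the phase $\varphi+\zeta(\rho)$ built into \eqref{eq:def_overline_u_theta0} is un-mixed to exactly $\varphi$:
\[
\overline{u}_\theta(t^*,\rho,\varphi)=C_0\,\bigl[\p_\rho^{k_0}\bigl(\sin(\psi+\zeta(\rho))\,g(\mu\rho)\bigr)\bigr]\big|_{\psi=\varphi-\zeta(\rho)},\qquad C_0:=\ep^{-2+k_0N}\mu^{2/p-s-k_0}\nu^{1/p}.
\]

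\textbf{Step 2 (leading profile).} Expanding $\p_\rho^{k_0}$ by the Leibniz and Fa\`{a} di Bruno formulas exactly as in the proof of Lemma \ref{lemma:G_estimates}, every resulting term is $\{\pm\sin,\pm\cos\}(\psi+\zeta(\rho))$ times a product of factors $\zeta^{(\ell)}(\rho)$ and $\mu^{k_0-j}g^{(k_0-j)}(\mu\rho)$; after setting $\psi=\varphi-\zeta(\rho)$ the trigonometric factor collapses to $\{\pm\sin,\pm\cos\}(\varphi)$ and the $\rho$-profile is unchanged. On the support $\rho\sim\mu^{-1}$ one has $\zeta^{(\ell)}(\rho)\sim\ep^{-N}\mu^{\ell}$, so every term carries total $\mu$-power $\mu^{k_0}$ while the power of $\ep^{-N}$ equals the number of $\zeta$-derivative factors present; the unique term containing $k_0$ of them is $\overline{u}_\theta^{\,\mathrm{main}}:=-\,C_0(\zeta'(\rho))^{k_0}g(\mu\rho)\sin\varphi$ (here $k_0=6$, so $\tfrac{d^{k_0}}{du^{k_0}}\sin u=-\sin u$ and $(\zeta')^{k_0}>0$), of amplitude $\sim C_0\ep^{-Nk_0}\mu^{k_0}$, while the remainder $R:=\overline{u}_\theta(t^*)-\overline{u}_\theta^{\,\mathrm{main}}$, supported in $\{g(\mu\rho)\neq0\}$, satisfies $|R|_{L^\infty}\lesssim C_0\ep^{-N(k_0-1)}\mu^{k_0}$ — smaller by a factor $\ep^N$.

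\textbf{Step 3 (dual pairing).} By \eqref{eq:besov_duality} with $q=1$ it is enough to produce $\phi$ with $|\phi|_{\dot B^{-s}_{p',\infty}}\lesssim1$ and $\int_{\RR^3}\overline{u}_\theta(t^*)\et\cdot\phi\,dx\gtrsim\ep^{-2}$. Take $\phi=\psi\et$ with $\psi(\rho,\varphi):=-c_0\,\mu^{s+2/p'}\nu^{1/p'}g(\mu\rho)\sin\varphi$ for a small universal $c_0>0$. Using $|\et|=1$ and $dx=r\rho\,d\rho\,d\varphi\,d\theta$ with $r\sim\nu^{-1}$, $\rho\sim\mu^{-1}$ on the support, the contribution of $\overline{u}_\theta^{\,\mathrm{main}}$ to $\int\overline{u}_\theta(t^*)\et\cdot\phi\,dx$ is
\[
c_0C_0\,\mu^{s+2/p'}\nu^{1/p'}\!\int (\zeta'(\rho))^{k_0}g(\mu\rho)^2\sin^2\!\varphi\,dx\ \gtrsim\ C_0\,\mu^{s+2/p'}\nu^{1/p'}\,\ep^{-Nk_0}\mu^{k_0-2}\nu^{-1}>0
\]
(because $(\zeta')^{k_0}\sim\ep^{-Nk_0}\mu^{k_0}$, $\int\sin^2>0$ and $g\not\equiv0$), while the contribution of $R$ is $\lesssim C_0\,\mu^{s+2/p'}\nu^{1/p'}\,\ep^{-N(k_0-1)}\mu^{k_0-2}\nu^{-1}$ and is absorbed once $\ep\le\ep_0(s,p)$. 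Inserting $C_0\ep^{-Nk_0}=\ep^{-2}\mu^{2/p-s-k_0}\nu^{1/p}$ and collecting exponents — the $\mu$-power is $(2/p-s-k_0)+(s+2/p')+(k_0-2)=0$ and the $\nu$-power is $1/p+1/p'-1=0$ — gives $\int_{\RR^3}\overline{u}_\theta(t^*)\et\cdot\phi\,dx\gtrsim\ep^{-2}$.

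\textbf{Step 4 (test-function norm; the main difficulty).} The field $\psi\et$ is a single-scale object at scale $\mu$ supported on a set of volume $\sim\mu^{-2}\nu^{-1}$: by \eqref{eq:diff_rho_varphi} and $|\nabla^j\et|\lesssim\nu^j\le\mu^j$ on its support, $|\nabla^j(\psi\et)|\lesssim c_0\mu^{s+2/p'}\nu^{1/p'}\mu^j$, hence $|\psi\et|_{L^{p'}}\lesssim c_0\mu^s$ and $|\psi\et|_{\dot W^{m,p'}}\lesssim c_0\mu^{s+m}$ for every integer $m$. Fixing an integer $m>-s$ (possible since $-s<3$; one may take $m=k_0$) and interpolating via Lemma \ref{lemma:besov_interpolation} together with the embeddings \eqref{eq:besov_embedding},
\[
|\psi\et|_{\dot B^{-s}_{p',\infty}}\le|\psi\et|_{\dot B^{-s}_{p',1}}\lesssim|\psi\et|_{L^{p'}}^{1+s/m}\,|\psi\et|_{\dot W^{m,p'}}^{-s/m}\lesssim c_0,
\]
so $c_0$ can be chosen with $|\phi|_{\dot B^{-s}_{p',\infty}}\le1$; combined with Step 3 and \eqref{eq:besov_duality} this gives $|\overline{u}_\theta(t^*)\et|_{\dot B^s_{p,1}}\gtrsim\ep^{-2}$. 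The crux is Step 2 — verifying that the transport un-mixes the phase \emph{exactly} at $t^*$ while the $\zeta'$-factors generated by $\p_\rho^{k_0}$ survive the differentiation, so that $\overline{u}_\theta(t^*)$ degenerates to the clean single-scale profile $\overline{u}_\theta^{\,\mathrm{main}}$ of amplitude $\sim\ep^{-2}\mu^{2/p-s}\nu^{1/p}$; the domination over $R$ is an $\ep$-smallness bookkeeping identical in spirit to Lemma \ref{lemma:G_estimates}, and all remaining steps are routine power counting.
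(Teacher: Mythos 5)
Your proposal is correct. Steps 1--2 coincide with the paper's own argument: the exact un-mixing of the phase $\varphi+\zeta(\rho)$ to $\varphi$ at $t=t^*$ (the paper's formula \eqref{eq:aux_lemma:est_u_theta_t_4}) and the extraction of the main term $-C_0(\zeta')^{k_0}g(\mu\rho)\sin\varphi$ carrying the full factor $(\ep^{-N})^{k_0}$, with the remainder smaller by $\ep^{N}$ (the paper's $\overline{u}_{\theta,p}/\overline{u}_{\theta,c}$ split). Where you genuinely diverge is the final step. The paper converts the main-term information into the Besov lower bound via the reverse interpolation $|f|_{\dot B^{s}_{p,1}}\gtrsim|f|_{\dot W^{1,p}}^{s}\,|f|_{L^{p}}^{1-s}$ (valid because $s<0$), which requires a two-sided $L^{p}$ estimate \emph{and} an upper bound on $|\overline{u}_\theta(t^*)\et|_{\dot W^{1,p}}$. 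You instead invoke the duality \eqref{eq:besov_duality} and pair against the explicit single-scale field $-c_0\mu^{s+2/p'}\nu^{1/p'}g(\mu\rho)\sin\varphi\,\et$, whose $\dot B^{-s}_{p',1}$ norm you correctly normalize to $O(1)$ by the $L^{p'}$/$\dot W^{m,p'}$ interpolation (your power counting in $\mu,\nu$ checks out, and $m=k_0>-s$ is legitimate since $-s<3$). Your route buys two things: it mirrors the duality technique the paper already uses for the upper bounds in Lemmas \ref{lemma:est_u_0r}--\ref{lemma:est_u_0theta}, needing only positivity of the main pairing plus an $L^\infty\times L^1$ bound on the remainder rather than a matching upper bound on a positive-order norm; and since your test function is controlled in the smaller space $\dot B^{-s}_{p',1}$, it actually delivers the stronger $\dot B^{s}_{p,\infty}$ lower bound that Proposition \ref{prop:approximate_inflation_s<0} ultimately asserts. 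The paper's route, in exchange, avoids constructing any test function. Both are complete proofs of the stated lemma.
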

\begin{proof}

Since $  s<0$,  to find the lower bound in $\dot B^{s}_{p,1} $, we can use the inverse interpolation inequality
\begin{equation}\label{eq:aux_lemma:est_u_theta_t_1}
|  \overline{u}_{ \theta }(t^*)  \et  |_{ \dot B^{s}_{p,1}     }  \gtrsim   |  \overline{u}_{ \theta }(t^*)  \et  |_{ \dot W^{1,  p}     }^{ s } |  \overline{u}_{ \theta }(t^*)  \et  |_{  L^{p }     }^{1-s} .
\end{equation}
which is a direct consequence   of Lemma \ref{lemma:besov_interpolation}.

We will estimate the right-hand side of \eqref{eq:aux_lemma:est_u_theta_t_1} in two steps below.

\noindent
\textbf{\underline{Step 1: $L^p$ bound}}

Recall that the initial data of $\overline{u}_{\theta}(t^*)$   is given  by 
\begin{equation}\label{eq:aux_lemma:est_u_theta_t_2}
\overline{u}_{0,\theta}  = \ep^{-2 + k_0 N } \mu^{\frac{2}{p } -  s - k_0  } \nu^{ \frac{1}{p}}  \p_\rho^{k_0 }  \sum_{0\leq i \leq k_0 }\p_\rho^{ i} \left(   \sin(\varphi + \zeta  (\rho )   )\right)  \mu^{ k-i} g^{(k-i)}(\mu \rho )
\end{equation}
where as before we denote  $ \zeta  (\rho ) = t^*  \ep^2 \mu^{ s   +\frac{2}{p } } \nu^{ \frac{1}{p}} \rho^{-1}       =  \ep^{ -N}  \mu^{-1} \rho^{-1}$ for simplicity.

The idea is to identify the main term in \eqref{eq:aux_lemma:est_u_theta_t_2} as in Lemma \ref{lemma:G_estimates} and    track its time evolution.

Further analyzing \eqref{eq:aux_lemma:est_u_theta_t_2}, by Faa di bruno, for any $i\in \NN$, we see that
\begin{equation}\label{eq:aux_lemma:est_u_theta_t_3}
\p_\rho^{ i} \left(   \sin(\varphi + \zeta  (\rho )  )   \right) = \sum_{m_1,\dots,m_i} C_{  m_1,\dots,m_i} \sin^{(m_1+\dots+m_i)}(\varphi + \zeta  (\rho )  ) \Pi_{ 1\leq j \leq  i }  \left( \zeta^{(j)}  \right)^{m_j},
\end{equation}
where the summation runs over $m_1 + 2m_2 + \dots + i m_i = i $.

Then by the design of $\overline{u}_{ \theta}  $ (transported by $\overline{u}_{ r}\er + \overline{u}_{z,p} \ez$), from Lemma \ref{lemma:overline_u_2D_Euler}, \eqref{eq:aux_lemma:est_u_theta_t_2} and \eqref{eq:aux_lemma:est_u_theta_t_3} we see that all the angle phase $\varphi + \zeta  (\rho )$ gets un-mixed to $\varphi$ at the time $t =t^* $. Namely, we have 
\begin{equation}\label{eq:aux_lemma:est_u_theta_t_4}
\begin{aligned}
\overline{u}_{ \theta} (t^*) \et & =   \ep^{-2 + k_0 N } \mu^{\frac{2}{p } -  s - k_0  } \nu^{ \frac{1}{p}}   \bigg(    \sum_{0\leq i \leq k_0  }   \mu^{ k-i} g^{(k-i)}(\mu \rho ) \\ 
\quad & \quad   \sum_{m_1,\dots,m_i} C_{  m_1,\dots,m_i} \sin^{(m_1+\dots+m_i)}(\varphi   ) \Pi_{ 1\leq j \leq  i }  \left( \zeta^{(j)}  \right)^{m_j}  \bigg)\et .
\end{aligned}
\end{equation}

Let us consider the main term in \eqref{eq:aux_lemma:est_u_theta_t_4}, defined by
\begin{equation}\label{eq:aux_lemma:est_u_theta_t_5}
\overline{u}_{ \theta, p } \et : =  \ep^{-2 + k_0 N } \mu^{\frac{2}{p } -  s - k_0  } \nu^{ \frac{1}{p}}       \sin^{(k_0 )}(\varphi   )  \left( \zeta^{(1)}  \right)^{k_0 }   g (\mu \rho )\et,
\end{equation}
and the decomposition
\begin{equation}\label{eq:aux_lemma:est_u_theta_t_6}
\overline{u}_{ \theta  }(t^*) \et  = \overline{u}_{ \theta, p } \et + \overline{u}_{ \theta, c } \et 
\end{equation}

As in the proof Lemma \ref{lemma:G_estimates},  on the support of $g(\mu\rho)$, we have $ |\zeta^{(j)}(\rho)| \sim  \ep^{ -N}  \mu^{ j}   $. It follows that in $ \overline{u}_{ \theta, c }  $, the  largest possible factor of $\ep$ is $(\ep^{ -N}   )^{k_0 - 1 }  $, namely
\begin{equation}\label{eq:aux_lemma:est_u_theta_t_7}
\begin{aligned}
|  \overline{u}_{ \theta, c } \et   |_{L^{ p}} &  \lesssim  \ep^{-2 + k_0 N } \mu^{\frac{2}{p } -  s - k_0  } \nu^{ \frac{1}{p}}     \left( \mu^{k_0} (\ep^{ -N}   )^{k_0 - 1 }   \right) \\
&  \lesssim  \ep^{-2  +N } \mu^{  -  s   }  .
\end{aligned}
\end{equation}
On the other hand, the main term \eqref{eq:aux_lemma:est_u_theta_t_5} satisfies  
\begin{equation}\label{eq:aux_lemma:est_u_theta_t_8}
\begin{aligned}
| \overline{u}_{ \theta, p } \et  |_{L^{ p}} & \sim \ep^{-2 + k_0 N } \mu^{\frac{2}{p } -  s - k_0  } \nu^{ \frac{1}{p}} \Big( \mu^{k_0} \ep^{-k_0 N } | g (\mu \rho )  |_{L^{ p}}    \Big)  \\
& \sim  \ep^{-2   } \mu^{  -  s   }  .       
\end{aligned}
\end{equation}
From \eqref{eq:aux_lemma:est_u_theta_t_7} and \eqref{eq:aux_lemma:est_u_theta_t_8}, it follow that for all sufficiently small $\ep$,  there holds
\begin{equation}\label{eq:aux_lemma:est_u_theta_t_9}
\begin{aligned}
| \overline{u}_{ \theta } \et  |_{L^{ p}} & \sim \ep^{-2   }  \mu^{  -  s   }        .
\end{aligned}
\end{equation}

\noindent
\textbf{\underline{Step 2: $\dot W^{1,p}$ bound}} 

Next, we establish the following upper bound for $  |  \overline{u}_{ \theta } (t^*)\et |_{\dot W^{1,p}}  $:
\begin{equation}\label{eq:aux_lemma:est_u_theta_t_10}
\begin{aligned}
|  \overline{u}_{ \theta } (t^*)\et |_{\dot W^{1,p}}  & \lesssim \ep^{-2   }  \mu^{ 1 -  s   }        .
\end{aligned}
\end{equation}    
Since $ |\nabla \et | \lesssim \nu $ on the support of $ \overline{u}_{ \theta }  $, it then suffices to show 
\begin{equation}\label{eq:aux_lemma:est_u_theta_t_11}
\begin{aligned}
|  \overline{u}_{ \theta } (t^*)  |_{\dot W^{1,p}}  & \lesssim \ep^{-2   }  \mu^{ 1 -  s   }        .
\end{aligned}
\end{equation} 
For $\nabla \overline{u}_{ \theta } (t^*)$, we need to bound $\p_\rho \overline{u}_{ \theta}$ and $\rho^{-1} \p_\varphi \overline{u}_{ \theta}$. Both    $\p_\rho \overline{u}_{ \theta}$ and $\rho^{-1} \p_\varphi \overline{u}_{ \theta}$ can be estimated in the same way as in Step 1 using \eqref{eq:aux_lemma:est_u_theta_t_4}. The same argument yields the bound  
\begin{equation}\label{eq:aux_lemma:est_u_theta_t_12}
\begin{aligned}
| \nabla \overline{u}_{ \theta} (t^*) |_{L^{p}} & \lesssim    \ep^{-2     } \mu^{1 -  s   }  .
\end{aligned}
\end{equation}
Thus \eqref{eq:aux_lemma:est_u_theta_t_10} holds. 

\noindent
\textbf{\underline{Step 3: Conclusion}} 

Now that we have \eqref{eq:aux_lemma:est_u_theta_t_9} and \eqref{eq:aux_lemma:est_u_theta_t_10}, we can go back to \eqref{eq:aux_lemma:est_u_theta_t_1} and obtain
\begin{equation} 
|  \overline{u}_{ \theta }(t^*)  \et  |_{ \dot B^{s}_{p,1}     }  \gtrsim   |  \overline{u}_{ \theta }(t^*)  \et  |_{ \dot W^{1,  p}     }^{ s } |  \overline{u}_{ \theta }(t^*)  \et  |_{  L^{p }     }^{1-s} \gtrsim     \ep^{  - 2 }  .
\end{equation}

\end{proof}

\section{Stability of approximation}\label{sec:no_blowup}

In this section, we prove that the approximate solution $ \overline{ u} $ defined in the previous section stays close in strong norms to the actual solution $u$ of \eqref{eq:NS} up to the critical time $t^*$.

In the following,  only the constants with an   $ \ep $ subscript depend on $\ep$. All constants are independent of $\mu,\nu     $ but may change from line to line.

\subsection{The main proposition}

We will be proving the following proposition, the main result of this section.
\begin{proposition}\label{prop:no_blowup}

Let $T=T(\ep, \mu  )> 0 $ be the maximal time of existence for the local-in-time solution $u: [0 , T ) \times \RR^3 \to \RR^3$ of \eqref{eq:NS} with the same initial data $u_0$ as $\overline{u}$.

For any $\ep>0$, there exists $\mu_\ep>0$ sufficiently large such that if $\mu \geq \mu_\ep$, then there must be $ 0<   t^*< T$, namely $ u  \in C^\infty( [0,t^* ] \times \RR^3 )$.

More quantitatively, for any $\ep>0$, if $\mu \geq \mu_\ep$, then for any $k \geq 0$  and $1 \leq {q} \leq \infty$
\begin{equation}\label{eq:prop_no_blowup}
|u - \overline{u}|_{L^\infty ([0,t^*]; \dot B^{k}_{q,\infty }   ) } \leq C_{k,\ep } \big(\mu^{-1}  \nu  \big)^{ \gamma  }   \mu^{k - s} \mu^{\frac{2}{ p} - \frac{2}{ {q}} }\nu^{\frac{1}{ p} - \frac{1}{ {q} } }
\end{equation}
where   $C_{k,\ep }$ is independent of $\mu$ and $\gamma  = \frac{9}{10}$ if $1 \leq  q \leq 2$ and $\gamma  = \frac{4}{10}$ if $ 2< q \leq \infty $.
\end{proposition}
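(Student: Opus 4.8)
The plan is a perturbative stability argument. First I would write the equation for the difference $v:=u-\overline u$, with $\pi:=p-\overline p$: subtracting \eqref{eq:NS} from \eqref{eq:prop_error_2} and using $\D u=\D\overline u=0$ together with the identity $v\cdot\nabla v+\overline u\cdot\nabla v=u\cdot\nabla v$, one gets the forced advection--diffusion system
\begin{equation*}
\p_t v-\Delta v+u\cdot\nabla v+v\cdot\nabla\overline u+\nabla\pi=-\overline E,\qquad \D v=0,\qquad v|_{t=0}=0,
\end{equation*}
that is, $v$ is transported by the divergence-free exact flow $u$, perturbed by the zeroth-order term $v\cdot\nabla\overline u$ and forced by $-\overline E$; applying the Leray projection $\mathbb P$ removes $\nabla\pi$. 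Two quantitative facts drive everything. Set $M_0:=|\nabla\overline u|_{L^\infty([0,t^*];L^\infty)}$; by \eqref{eq:prop_error_1} (with $k=1$, $q=\infty$) one has $M_0\lesssim_\ep\mu^{1-s+2/p}\nu^{1/p}$, and hence, by the very definition \eqref{eq:def_critical_t*} of $t^*$, the crucial bound $M_0\,t^*\lesssim_\ep 1$, a constant independent of $\mu,\nu$. Likewise, by \eqref{eq:prop_error_3} the same cancellation gives $\int_0^{t^*}|\overline E(\tau)|_{W^{k,q}}\,d\tau\lesssim_{k,q,\ep}(\mu^{-1}\nu)\,\mu^{k-s}\mu^{2/p-2/q}\nu^{1/p-1/q}$, i.e. the large factor $\mu^{1+2/p-s}\nu^{1/p}$ in \eqref{eq:prop_error_3} is exactly absorbed by $t^*$, leaving the genuine small gain $\mu^{-1}\nu$.

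Next I would set up a continuity argument. Since $u_0=\overline u_0\in C_c^\infty$, the solution $u$ is smooth on its maximal interval $[0,T)$, and by the Beale--Kato--Majda criterion together with Proposition~\ref{prop:kato_ponce} it persists in every $W^{k,q}$ as long as $|\nabla u|_{L^\infty}$ stays finite. Let $\tau_*\in(0,\min(T,t^*)]$ be maximal with $|\nabla u|_{L^\infty([0,\tau_*]\times\RR^3)}\leq 2M_0$. On $[0,\tau_*]$, Proposition~\ref{prop:kato_ponce} gives, for $1<q<\infty$,
\begin{equation*}
|u(t)|_{W^{k,q}}\leq e^{2C_{k,q}M_0 t}\,|u_0|_{W^{k,q}}\leq C_{k,q,\ep}\,\mu^{k-s}\mu^{2/p-2/q}\nu^{1/p-1/q},
\end{equation*}
using $M_0 t^*\lesssim_\ep 1$ and Lemma~\ref{lemma:estimates_overline_u} for $|u_0|_{W^{k,q}}=|\overline u_0|_{W^{k,q}}$. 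For $v$ I would run, at each regularity level, the Kato--Ponce/Liapunov commutator estimate that underlies Proposition~\ref{prop:kato_ponce}, applied to the projected $v$-equation: the advection by the divergence-free $u$ and the perturbation $v\cdot\nabla\overline u$ together contribute only the factor $e^{C_{k,q}(|\nabla u|_{L^\infty_{t,x}}+M_0)t}\lesssim_\ep 1$ on $[0,\tau_*]$ — here one uses that the high derivatives of $\overline u$ satisfy $|\nabla^j\overline u|_{L^\infty}\lesssim_\ep\mu^j|\overline u|_{L^\infty}$, so the commutator remainders stay subordinate to the $|\nabla\overline u|_{L^\infty}$ term — while the forcing contributes exactly $\int_0^t|\overline E|_{W^{k,q}}\,d\tau$. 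This yields, on $[0,\tau_*]$ and for $1<q<\infty$,
\begin{equation*}
|v(t)|_{W^{k,q}}\leq C_{k,q,\ep}\,(\mu^{-1}\nu)\,\mu^{k-s}\mu^{2/p-2/q}\nu^{1/p-1/q}.
\end{equation*}
In particular $|\nabla v|_{L^\infty}$ is small relative to $M_0$ once $\mu\geq\mu_\ep$ (via the same estimate in a space controlling $L^\infty$), so $|\nabla u|_{L^\infty}<2M_0$ strictly on $[0,\tau_*]$; combined with the uniform $W^{k,q}$ bounds for $u$, this rules out blow-up and forces $\tau_*=t^*<T$, giving $u\in C^\infty([0,t^*]\times\RR^3)$.

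To reach the endpoint indices $q\in\{1,\infty\}$ and the homogeneous Besov norms in \eqref{eq:prop_no_blowup}, I would interpolate the gain-$(\mu^{-1}\nu)$ estimate just obtained (at an interior index $q_0$, together with the embedding $\dot W^{k,q_0}\hookrightarrow\dot B^k_{q_0,\infty}$) against a crude high-regularity bound $|v|_{\dot B^{K}_{q,\infty}}\leq|u|_{\dot B^K_{q,\infty}}+|\overline u|_{\dot B^K_{q,\infty}}\lesssim_{K,\ep}\mu^{K-s}\mu^{2/p-2/q}\nu^{1/p-1/q}$ that carries no gain at all. The interpolation weights, together with the mismatch between the $\mu$-power in the Sobolev scaling at $q_0$ and at $q$, degrade the exponent of $\mu^{-1}\nu$ from $1$ to $\gamma=\tfrac{9}{10}$ when $1\leq q\leq2$ and $\gamma=\tfrac{4}{10}$ when $2<q\leq\infty$; choosing $K$ large and $q_0$ appropriately yields \eqref{eq:prop_no_blowup}. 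The hard part throughout is the perturbation $v\cdot\nabla\overline u$: since $\overline u$ oscillates at frequency $\sim\mu$ with a comparatively large amplitude, its high-order derivatives are enormous and a naive $H^m$ energy estimate would lose uncontrolled powers of $\mu$; the resolution, in the Beale--Kato--Majda/Kato--Ponce spirit, is that only $|\nabla\overline u|_{L^\infty}$ enters the exponential growth rate, and $t^*$ is defined precisely so that $|\nabla\overline u|_{L^\infty}\,t^*\lesssim_\ep 1$. Making this quantitative at every Sobolev/Besov level, while tracking the $\mu$-scales so that the commutator remainders remain subordinate, is the technical core of the section.
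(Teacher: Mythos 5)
Your architecture matches the paper's: the difference equation for $w=u-\overline u$, the bootstrap on $|\nabla u|_{L^\infty}$, the observations $|\nabla\overline u|_{L^\infty}t^*\lesssim_\ep 1$ and that $t^*$ converts the $\overline E$ bound into a gain of $\mu^{-1}\nu$, and a final embedding/interpolation to reach the Besov norms. However, your central quantitative claim --- that a Kato--Ponce commutator estimate on the $v$-equation yields the \emph{full} gain $|v(t)|_{W^{k,q}}\lesssim_\ep(\mu^{-1}\nu)\,\mu^{k-s}\mu^{2/p-2/q}\nu^{1/p-1/q}$ for every $1<q<\infty$, with the commutator remainders ``subordinate'' --- is a genuine gap. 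The remainders are not part of the exponential (Gronwall) rate; they are inhomogeneous source terms of the form $\nabla^m v\cdot\nabla^{k+1-m}u$ and $\nabla^m v\cdot\nabla^{k+1-m}\overline u$ with $m\le k-1$, and to close them you must place one factor in $L^\infty$. For the exact solution $u$, Proposition~\ref{prop:kato_ponce} gives no $q=\infty$ information, so $|\nabla^{j}u|_{L^\infty}$ ($j\ge2$) is only reachable through a Sobolev embedding that, in this anisotropic regime, costs a positive power of $\mu/\nu$ (e.g.\ $H^{3/2+}\hookrightarrow L^\infty$ costs $\approx(\mu^{-1}\nu)^{-1/2}$). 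The alternative of putting the low derivative of $v$ in $L^\infty$ is worse: the crude bound $|\nabla v|_{L^\infty}\le 3M_0$ destroys the gain entirely, and the improved bound again requires the lossy embedding applied to $v$. This is exactly why the paper's Lemma~\ref{lemma:bootstrap} works only in $L^2$, runs an induction with strictly decreasing exponents $\delta_k=\tfrac{9+10^{-k}}{10}<1$ to absorb a carefully calibrated embedding loss $(\mu^{-1}\nu)^{-10^{-k-1}}$ at each level, and never attains the full power $1$.

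Two downstream consequences. First, closing the bootstrap requires $|\nabla w|_{L^\infty}$, and the passage from the $L^2$ gain $\delta_k>9/10$ to $L^\infty$ via $H^{5/2+}\hookrightarrow W^{1,\infty}$ eats half a power, leaving only $(\mu^{-1}\nu)^{2/5}$ --- this is why the pre-embedding gain must exceed $1/2$, a constraint your write-up never confronts. Second, your endpoint treatment for $1\le q<2$ by ``interpolating at an interior index $q_0$ against a crude high-regularity bound'' does not work: interpolation in the regularity index (Lemma~\ref{lemma:besov_interpolation}) is at fixed $q$, and going \emph{below} $q_0$ in integrability is not an interpolation; moreover $w$ is not compactly supported, so one cannot use H\"older on the support either. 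The paper instead uses the Duhamel formula $w=\int_0^t e^{(t-\tau)\Delta}(-\mathbb P\D(w\otimes u+\overline u\otimes w)+\overline E)\,d\tau$ and H\"older with $w$ in $L^2$ and $u,\overline u$ in $L^{2q/(2-q)}$, which is where the exponent $\gamma=\tfrac{9}{10}$ for $q\le2$ actually comes from; the $\gamma=\tfrac{4}{10}$ for $q>2$ comes from the lossy $L^2\to L^\infty$ embedding. You correctly flag the high-derivative commutator terms as ``the technical core,'' but the mechanism you offer to handle them does not suffice.
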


The proof of Proposition \ref{prop:no_blowup}  occupies the rest of this section. The slightly more precise  \eqref{eq:prop_no_blowup} is to leave some room for the lossy embedding that will incur in Section \ref{sec:proof}.

\subsection{The bootstrap assumption}

We use a bootstrap argument that transfers the smallness of $\overline{E}$ to the difference $u - \overline{u} $ on the interval $[0,t^*]$, thus proving  Proposition \ref{prop:no_blowup}.

Denote by $w  = u - \overline{u}$ the difference between the approximate solution $ \overline{u}$ and the exact solution $u$ (having the same initial data $u_0$ as $ \overline{u} $) . It follows from Proposition \ref{prop:approximate} that the vector field $w$, well-defined on the time interval $[0,T)$,  satisfies the evolution equation
\begin{equation}\label{eq:aux_noblowup_0}
\begin{cases}
\p_t w + u\cdot \nabla w + w \cdot \nabla  \overline{u} + \nabla q = \overline{E} & \\
\D w  = 0 & \\
w |_{t = 0 } = 0 &
\end{cases}
\quad \text{for $(t,x) \in [0,T ) \times \RR^3 $}
\end{equation}
where $q : = p - \overline{p}$ with $p$ being the pressure of the exact solution $u$ and $\overline{p}$ the pressure from Proposition \ref{prop:approximate}.

For any given $\ep>0$, let us fix $M_\ep \geq 1$  such that 
\begin{equation}\label{eq:aux_noblowup_1}
|\nabla \overline{u}  |_{L^\infty(  [ 0,t^*] \times \RR^3) } \leq M_\ep \mu^{1- s +\frac{2}{p} } \nu^{\frac{1}{p}}   .
\end{equation}  
This is always possible thanks to \eqref{eq:prop_error_1} from Proposition \ref{prop:approximate}. We note that the exact value $M_\ep \geq 1$ plays no role.

For this fixed  $M_\ep$, we will prove the following bound on the exact solution $u$
\begin{equation}\label{eq:aux_noblowup_claim_0}
|\nabla u   |_{L^\infty(  [ 0,t^*] \times \RR^3) } \leq 2   M_\ep  \mu^{1- s +\frac{2}{p} } \nu^{\frac{1}{p}}   \quad \text{for all} \quad    0\leq t    \leq t^* . 
\end{equation}  
We will prove \eqref{eq:aux_noblowup_claim_0} by a bootstrap argument. Since   $|\nabla  {u}_0  |_{L^\infty} \leq M_\ep \mu^{1- s +\frac{2}{p} } \nu^{\frac{1}{p}} $ holds at $t = 0$, by  continuity  let us introduce

\mdfsetup{skipabove=2pt,skipbelow=2pt}
\begin{mdframed}[linewidth=1pt,frametitle={The bootstrap assumption:},nobreak=true]

For some $ 0< t_0 \leq t^*$, there holds 
\begin{equation}\tag{$\dagger$}\label{eq:aux_noblowup_claim}
|\nabla u (t) |_{L^\infty } \leq 2   M_\ep  \mu^{1- s +\frac{2}{p} } \nu^{\frac{1}{p}}   \quad \text{for all} \quad    0\leq t    \leq t_0  .
\end{equation}

\end{mdframed}

In the steps below, we will prove that if $\mu $ is sufficiently large (depending on $\ep>0$), then under the bootstrap assumption \eqref{eq:aux_noblowup_claim},  we  have the improved bound
$$
|\nabla u (t) |_{L^\infty } \leq    \frac{3}{2} M_\ep  \mu^{1- s +\frac{2}{p} } \nu^{\frac{1}{p}}  \quad \text{for all} \quad    0\leq t    \leq t_0  ,
$$
which will imply   \eqref{eq:aux_noblowup_claim} on the whole interval $[0, t^*]$ by continuity.

\subsection{Basic estimates}

To facilitate the estimates, we will frequently use the following bounds which are  direct consequences of Proposition \ref{prop:kato_ponce},  the definition of $t^*$ (from \eqref{eq:def_critical_t*}), and the bootstrap assumption \eqref{eq:aux_noblowup_claim},
\begin{equation}\label{eq:aux_noblowup_assumption_0}
\begin{cases}
| u  |_{ L^\infty([0,t_0]; W^{k, {q}} )}   & \leq C_{\ep, k,{q} }     \mu^{ k -s  } \mu^{ \frac{2}{p }  - \frac{2}{{q}}  } \nu^{\frac{1}{p} - \frac{1}{{q}}  }  ,   \\
t^* \mu^{1- s +\frac{2}{p} } \nu^{\frac{1}{p}}   & \leq C_\ep 
\end{cases} 
\end{equation}
where $k \geq 0$  and $ 1<{q}<\infty $. 

Note that in Proposition \ref{prop:no_blowup} we claimed the estimates for all $1 \leq q \leq \infty$ compared to $ 1<{q}<\infty $ in \eqref{eq:aux_noblowup_assumption_0}.

\subsection{Energy estimates}
As the first step in the bootstrap argument, we derive suitable energy estimates on $w$, gaining a small factor $  (\mu^{-1}   \nu  )   $   comparing to $   u $ and $    \overline{ u } $.

As in \cite{2404.07813}, we need the power of the smallness factor $\mu^{-1} \nu$ in  \eqref{eq:lemma:bootstrap}   strictly bigger than $\frac{1}{2}$ to compensate the lossy embedding $H^{\frac{5}{2} + } \hookrightarrow W^{1,\infty } $ in our anisotropic setup.

\begin{lemma}\label{lemma:bootstrap}
Under the bootstrap assumption \eqref{eq:aux_noblowup_claim}, the difference $w = u - \overline{u}$ satisfies for any integer $k \geq 0$ the estimate,
\begin{equation}\label{eq:lemma:bootstrap}
\begin{aligned}
|\nabla^k w(t) |_{L^{2}}  \leq  C_{\ep, k } \big(\mu^{-1}  \nu  \big)^{\delta_k}     \mu^{k - s } \mu^{ \frac{2}{p }  - 1 } \nu^{\frac{1}{p} - \frac{1}{2}  } \quad \text{for any $t \in [0, t_0 ]$}
\end{aligned}
\end{equation}
where $\delta_k  =   \frac{ 9  + 10^{- k  }  }{10}        > 0$ for each $ k \in \NN $.

\end{lemma}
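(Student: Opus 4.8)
The plan is to run an energy estimate directly on the equation \eqref{eq:aux_noblowup_0} for $\nabla^k w$, extracting the gain $(\mu^{-1}\nu)^{\delta_k}$ from the error term $\overline{E}$ while controlling the transport and stretching terms via Grönwall. First I would differentiate \eqref{eq:aux_noblowup_0} $k$ times, pair with $\nabla^k w$ in $L^2$, and use $\D u = 0$ to kill the leading transport term $\int u\cdot\nabla(\nabla^k w)\cdot\nabla^k w\,dx = 0$. The remaining terms are: commutator terms $[\nabla^k, u\cdot\nabla]w$, the stretching term $\nabla^k(w\cdot\nabla\overline{u})$, the pressure term (which vanishes after integration by parts by $\D w = 0$), and the forcing $\nabla^k\overline{E}$. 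For the commutator I would invoke a standard Kato--Ponce/Moser estimate: $|[\nabla^k, u\cdot\nabla]w|_{L^2} \lesssim |\nabla u|_{L^\infty}|\nabla^k w|_{L^2} + |\nabla^k u|_{L^2}|\nabla w|_{L^\infty}$; using the bootstrap bound \eqref{eq:aux_noblowup_claim} for $|\nabla u|_{L^\infty}$ and \eqref{eq:aux_noblowup_assumption_0} for $|\nabla^k u|_{L^2}$, together with an interpolated bound for $|\nabla w|_{L^\infty}$. The stretching term is bounded by $|\nabla^k w|_{L^2}|\nabla\overline{u}|_{L^\infty} + \sum |\nabla^i w|_{L^2}|\nabla^{k+1-i}\overline{u}|_{L^\infty}$, controlled again by \eqref{eq:aux_noblowup_1} and \eqref{eq:prop_error_1}, plus a lower-order induction on $k$. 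The net coefficient of $|\nabla^k w|_{L^2}$ in the differential inequality is $\lesssim_\ep \mu^{1-s+2/p}\nu^{1/p}$, whose time integral over $[0,t^*]$ is $O_\ep(1)$ by \eqref{eq:aux_noblowup_assumption_0}, so the Grönwall exponential contributes only an $\ep$-dependent constant.

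The key quantitative point is the size of the forcing. By Proposition \ref{prop:approximate}, $|\nabla^k\overline{E}|_{L^2}$ carries exactly the extra factor $(\mu^{-1}\nu)\cdot(\mu^{1+2/p-s}\nu^{1/p})$ relative to $|\nabla^k\overline{u}|_{L^2}$; after integrating in time against the same $O_\ep(1)$ factor $t^*\cdot(\mu^{1+2/p-s}\nu^{1/p}) = O_\ep(1)$ one is left with a gain of precisely $(\mu^{-1}\nu)^1$ at the level of the source. This would give $\delta_k = 1$ were it not for the lossy interpolation needed to close: to bound $|\nabla w|_{L^\infty}$ appearing in the commutator and stretching terms we must pass through $|w|_{\dot H^{k'}}$ for some $k' > 5/2$ (the embedding $H^{5/2+}\hookrightarrow W^{1,\infty}$ in $\RR^3$), and the higher norms $|\nabla^{k'}w|_{L^2}$ come with a weaker gain. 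This forces an induction downward in $k$: I would prove \eqref{eq:lemma:bootstrap} by decreasing induction starting from a large top index $K$ (say $K$ chosen so that $10^{-K}$ is negligibly small, consistent with the Kato--Ponce bound \eqref{eq:prop_kato_ponce} giving some uniform high-regularity control), where $\delta_K \approx 9/10$, and at each step the recursion $\delta_k = \frac{9 + 10^{-k}}{10}$ reflects losing a fixed fraction of the gain to the interpolation $|w|_{W^{1,\infty}}\lesssim |w|_{L^2}^{1-\alpha}|w|_{\dot H^{k+1}}^\alpha$ with $\alpha$ close to (but strictly less than) the critical threshold — the geometric improvement $10^{-k}$ coming from the fact that the higher-order term in the interpolation enters with a small power.

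The main obstacle I anticipate is bookkeeping the interpolation exponents so that the self-improving inequality $\delta_k = \frac{9+10^{-k}}{10}$ genuinely closes: one needs, at index $k$, that the product of the gain $(\mu^{-1}\nu)^{\delta_{k+1}\alpha}$ from the high-regularity factor $|w|_{\dot H^{k+1}}$ with the $O_\ep(1)$ Grönwall constant still beats $(\mu^{-1}\nu)^{\delta_k}$, which requires $\delta_{k+1}\alpha \geq \delta_k$ for the appropriate $\alpha = \alpha(k) \in (1/2, 1)$ dictated by $H^{5/2+}\hookrightarrow W^{1,\infty}$ and the anisotropic scaling of $\overline{u}$ (the factors $\mu^{2/p}\nu^{1/p}$ versus $\mu^{2/p-2/q}\nu^{1/p-1/q}$). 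Since $\delta_k \to 9/10$ and the recursion is contractive, this is consistent as long as the base case at $k = K$ is handled by brute force using \eqref{eq:prop_kato_ponce} (which gives exponential-in-$M_\ep t^*$, hence $O_\ep(1)$, control of all Sobolev norms of $u$, and similarly of $\overline{u}$ by \eqref{eq:prop_error_1}), and then the difference $w$ at top order trivially satisfies \emph{some} polynomial bound in $\mu^{-1}\nu$ which one improves downward. I would also need to verify that the $\mu$-large threshold $\mu_\ep$ absorbs all the $\ep$-dependent constants $C_{k,\ep}$ accumulated through the finitely many induction steps from $K$ down to $0$, so that the final bound \eqref{eq:lemma:bootstrap} holds with the stated $\delta_k$ for every fixed $k$; this is where the role of $\mu \geq \mu_\ep$ and the smallness $\nu = \mu^{1-b}\ll\mu$ is essential.
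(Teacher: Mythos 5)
The overall strategy (energy estimate on $\nabla^k w$, Gr\"onwall with an $O_\ep(1)$ exponential factor, the $\mu^{-1}\nu$ gain coming from the source $\overline{E}$) matches the paper's, but your organization of the commutator creates a gap that your induction cannot close. By writing the Kato--Ponce/Moser bound as $|\nabla^k u|_{L^2}|\nabla w|_{L^\infty}$ you place an $L^\infty$ norm on the unknown $w$, which forces you to control $|w|_{H^{5/2+}}$ --- a \emph{higher}-order norm of $w$ --- and hence to run the induction downward from a large top index $K$. But that downward induction has no valid base case: the only ``brute force'' bound available at $k=K$ is the triangle inequality $|\nabla^K w|_{L^2}\leq |\nabla^K u|_{L^2}+|\nabla^K \overline{u}|_{L^2}$ via Proposition \ref{prop:kato_ponce} and \eqref{eq:prop_error_1}, which carries \emph{no} power of $\mu^{-1}\nu$ whatsoever, so the recursion starts from $\delta_K=0$ and cannot manufacture $\delta_K\approx 9/10$. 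You gesture at an iterative self-improvement, but you neither set it up nor verify it converges, and it is not needed.

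The paper sidesteps the issue by distributing the Leibniz expansion the other way: every product term is of the form $|\nabla^m w|_{L^2}\,\big(|\nabla^{k+1-m}\overline{u}|_{L^\infty}+|\nabla^{k+1-m}u|_{L^\infty}\big)$ with $m\leq k-1$ (the $m=k$ terms being absorbed into Gr\"onwall), so only \emph{lower}-order norms of $w$ ever appear and the induction runs upward from $k=0$, where the source yields the full gain $\delta_0=1$. Consequently the correction $10^{-k}$ in $\delta_k$ has a different origin from the one you propose: it is the loss incurred in bounding $|\nabla^{j}u|_{L^\infty}$ from the a priori bound \eqref{eq:aux_noblowup_assumption_0} on $|u|_{W^{j+3/q+\delta,q}}$ via Sobolev embedding, which costs a factor $\mu^{\delta+1/q}\nu^{-1/q}\leq(\mu^{-1}\nu)^{-10^{-k-1}}$ upon choosing $q$ large depending on $k$; the elementary inequality $\delta_m-10^{-k-1}\geq\delta_k$ for $m\leq k-1$ is exactly what lets the induction close. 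Your attribution of the loss to the interpolation $|w|_{W^{1,\infty}}\lesssim|w|_{L^2}^{1-\alpha}|w|_{\dot H^{k+1}}^{\alpha}$ is therefore not where $\delta_k$ comes from, and as written the argument does not establish the lemma. The fix is to reorganize so that all $L^\infty$ factors land on $u$ and $\overline{u}$ and to induct upward.
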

\begin{proof}

For each $k \in \NN$ we need to find $\mu$-independent constants $C_{\ep,k}$ such that \eqref{eq:lemma:bootstrap} holds. We will prove by induction.

\noindent
\textbf{\underline{Step 1: The case $k =0  $}}

Since both $u$ and  $\overline{u} $ are divergence-free, multiplying \eqref{eq:aux_noblowup_0} by $ w = u - \overline{u}$ and integrating, we obtain 
\begin{equation}\label{eq:aux_lemma:bootstrap_1}
\frac{d}{dt}|w(t) |_{L^{2}}^2 + |\nabla w|_{L^{2}}^2  \lesssim   |\nabla \overline{u}  |_{L^\infty } | w |_{ L^{2} }^2     +  | E |_{L^{2}}| w |_{L^{2}}.
\end{equation} 

We drop the positive dissipation term on the left-hand side of \eqref{eq:aux_lemma:bootstrap_1}. By the estimates in Proposition \ref{prop:approximate} and \eqref{eq:aux_noblowup_assumption_0} which hold on $[0,t^*]$,  it follows that
\begin{equation}\label{eq:aux_lemma:bootstrap_2}
\begin{aligned}
\frac{d}{dt}|w(t) |_{L^{2}}  & \lesssim_\ep \mu^{1 -s+ \frac{2}{p}} \nu^{\frac{1}{p}} | w |_{ L^{2} }      +\mu^{   -   s}   (\mu^{-1}   \nu )  \mu^{1 -s+ \frac{2}{p}} \nu^{\frac{1}{p}} \mu^{ \frac{2}{p }  - 1 } \nu^{\frac{1}{p} - \frac{1}{2}  }   .
\end{aligned} 
\end{equation}

Applying Gronwall's inequality (recalling $w |_{t=0} = 0 $) to \eqref{eq:aux_lemma:bootstrap_2} yields 
\begin{equation}\label{eq:aux_lemma:bootstrap_3}
\begin{aligned}
|w(t) |_{L^{2}}   & \lesssim_\ep   e^{C_\ep t \mu^{1 -s+ \frac{2}{p}} \nu^{\frac{1}{p}}  }   \Big(   t \mu^{   -   s}   (\mu^{-1}   \nu )  \mu^{1 -s+ \frac{2}{p}} \nu^{\frac{1}{p}} \mu^{ \frac{2}{p }  - 1 } \nu^{\frac{1}{p} - \frac{1}{2}  }  \Big)            \quad \text{for any $t \in [0, t_0 ]$}  .
\end{aligned}
\end{equation}

Then by \eqref{eq:aux_noblowup_assumption_0}, the exponential factor in \eqref{eq:aux_lemma:bootstrap_3} is independent of $\mu$ and $\nu$, and hence we have shown \eqref{eq:lemma:bootstrap} when $k=0$:
\begin{equation}\label{eq:aux_lemma:bootstrap_4}
|w(t) |_{L^{2}}   \leq C_\ep      \big(\mu^{-1}  \nu  \big)   \mu^{ -s + \frac{2}{p }  - 1 } \nu^{\frac{1}{p} - \frac{1}{2}  }   \quad \text{for any $t \in [0, t_0 ]$} .
\end{equation}

\noindent
\textbf{\underline{Step 2: The case $k\geq 1$}}

Now we assume \eqref{eq:lemma:bootstrap} has been proved for levels $\leq k-1$ with $\mu,\nu$ independent constants.

By testing \eqref{eq:aux_noblowup_0} with $-\p_\alpha^2 w$ and summing over all multi-indexes $|\alpha| = k$, we have the following standard energy estimates for $\nabla^k  w  $:
\begin{equation} \label{eq:aux_lemma:bootstrap_5}
\begin{aligned}
\frac{d}{dt} |  \nabla^k  w  |_{L^{2}}^2  & \lesssim_k  I(t) + J(t)
\end{aligned}
\end{equation}
where we have  dropped the positive linear term, and the  terms  $I(t) , J(t) $ are given by  
\begin{equation}\label{eq:aux_lemma:bootstrap_6}
\begin{aligned}
& I(t) =    | \nabla   \overline{u}  |_{L^{\infty }}   |  \nabla^k w  |_{L^{2}}^2          + |   \nabla u |_{L^{\infty }}  |  \nabla^k w  |_{L^{2}}^2   + | \nabla^k E |_{L^{2}}|\nabla^k w |_{L^{2}} 
\end{aligned}
\end{equation}
and  
\begin{equation}\label{eq:aux_lemma:bootstrap_7}
\begin{aligned}
&   J (t) =   |  \nabla^k w  |_{L^{2}} \sum_{ 0 \leq   m  \leq k -1 }    |\nabla^m  w   |_{L^{2}}  \big(   |   \nabla^{k +1 - m }     \overline{u}  |_{L^\infty}  +|   \nabla^{k+1 - m }      {u}  |_{L^\infty}    \big) .
\end{aligned}
\end{equation}

The idea is that $I(t)$ can be bounded similarly to the $L^2$ case and $J(t)$ can be estimated using the induction hypothesis on $|  \nabla^m w  |_{L^{2}}$, $m \leq k-1$.

\noindent
\textbf{\underline{Step 3: Estimates of $I(t) $}}

For the term $I$, using \eqref{eq:aux_noblowup_assumption_0} and Proposition \ref{prop:approximate} we have
\begin{equation}\label{eq:aux_lemma:bootstrap_8}
I \leq  C_{\ep, k }      \mu^{1+ \frac{2}{p} -s} \nu^\frac{1}{p}  |  \nabla^k w  |_{L^{2}}^2     + | \nabla^k E |_{L^{2}}|\nabla^k w |_{L^{2}}  .
\end{equation}
For the second term in \eqref{eq:aux_lemma:bootstrap_8}, we use Young's inequality and Proposition \ref{prop:approximate}, obtaining
\begin{equation}\label{eq:aux_lemma:bootstrap_9}
\begin{aligned}
&| \nabla^k E |_{L^{2}}|\nabla^k w |_{L^{2}} \\ & \lesssim_{\ep,k }   \mu^{1+ \frac{2}{p} -s}  \nu^\frac{1}{p}   |\nabla^k w |_{L^{2}}^2  + (  \mu^{1+ \frac{2}{p} -s}   \nu^\frac{1}{p}  )^{-1} | \nabla^k E |_{L^{2}}^2 \\
&  \lesssim_{\ep,k }  \mu^{1+ \frac{2}{p} -s} \nu^\frac{1}{p}  |\nabla^k w |_{L^{2}}^2  +  (\mu^{-1} \nu)^2   (\mu^{1+\frac{ 2}{p}-  s}    \nu^\frac{1}{p}  )  (  \mu^{k  - s+ \frac{ 2}{p}  - 1 } \nu^{\frac{1}{p} - \frac{1}{2} } )^2 .     
\end{aligned}
\end{equation}
From \eqref{eq:aux_lemma:bootstrap_8} and \eqref{eq:aux_lemma:bootstrap_9}, it follows that
\begin{equation}\label{eq:aux_lemma:bootstrap_9a}
I \lesssim_{\ep, k }\mu^{1+ \frac{2}{p} -s} \nu^\frac{1}{p}  |\nabla^k w |_{L^{2}}^2  +   (\mu^{-1} \nu)^2   (\mu^{1+\frac{ 2}{p}-  s}    \nu^\frac{1}{p}  )  (\mu^{ k  - s+ \frac{ 2}{p}  - 1 } \nu^{\frac{1}{p} - \frac{1}{2} } )^2   .
\end{equation}

\noindent
\textbf{\underline{Step 4: Estimates of $J(t)$}}

To estimate $J$, we first bound the factor $   |   \nabla^{ m }      {u}  |_{L^\infty}      $  for  $  m \in \NN   $.

By the Sobolev embedding $ W^{ m+ \frac{3}{q} +\delta, q} (\RR^3) \hookrightarrow W^{m,\infty }(\RR^3)$  for any $\delta >0$ and $q<\infty$, it follows from \eqref{eq:aux_noblowup_assumption_0} that for any $ m  \in \NN$,
\begin{equation}\label{eq:aux_lemma:bootstrap_10}
\begin{aligned}
| \nabla^m u   |_{ L^\infty( [0,t_0] \times \RR^3  ) }        & \lesssim_{\delta, p } |  u  |_{ L^\infty( [0,t_0] ;W^{m+\frac{3}{q} +\delta, p}  ) }     \\
& \lesssim_{m,\delta, q }  \mu^{ m   -s     + \frac{2}{q}     } \nu^{  \frac{1}{q } }   \Big( \mu^{\delta + \frac{ 1}{q}}  \nu^{- \frac{ 1}{q}}   \Big)     .
\end{aligned}
\end{equation} 

By \eqref{eq:def_small_b}, we  may choose $q  <\infty, \delta >0$ depending on $k$ such that
\begin{equation}\label{eq:aux_lemma:bootstrap_11}
\mu^{  \delta   } \mu^{    \frac{1 }{q}     } \nu^{  - \frac{1}{q} } \leq   \mu^{  b 10^{-k } } \leq   ( \mu^{-1} \nu )^{- 10^{-k-1 } } .
\end{equation}
It follows from \eqref{eq:aux_lemma:bootstrap_10} and \eqref{eq:aux_lemma:bootstrap_11} that  for any $m\in \NN$
\begin{equation}\label{eq:aux_lemma:bootstrap_12}
\begin{aligned}
| \nabla^m u   |_{ L^\infty( [0,t_0] \times \RR^3  ) }   \lesssim_{\ep, m,k }    (\mu^{ m   -s     + \frac{2}{p}     } \nu^{  \frac{1}{p} }  ) ( \mu^{-1} \nu )^{- 10^{-k -1} }  
\end{aligned}
\end{equation} 
where we note the loss of a very small power of $   \mu^{-1} \nu  $  compared to $|\nabla^m \overline{u}   (t)|_{ L^\infty }$.

It then follows from  \eqref{eq:aux_lemma:bootstrap_12}   and the inductive assumption at levels $ m \leq  k-1$ that 
\begin{align}
J (t)  
&  \leq  
C_{\ep, k }|  \nabla^k w  |_{L^{2}}  \sum_{    m  \leq k -1 }    (\mu^{-1}  \nu )^{\delta_{ m }  }  (\mu^{ m   -s     + \frac{2}{p}     } \nu^{  \frac{1}{p} }  ) \Big(   (\mu^{ k+1 - m   -s     + \frac{2}{p}     } \nu^{  \frac{1}{p} }  )           (\mu^{-1}  \nu )^{  - 10^{-k -1}   }  \Big)  \\
&  \leq  
C_{\ep, k }  |  \nabla^k w  |_{L^{2}} (\mu^{ k   -s     + \frac{2}{p}     } \nu^{  \frac{1}{p} }  )   (\mu^{ 1   -s     + \frac{2}{p}     } \nu^{  \frac{1}{p} }  )  \sum_{    m  \leq k -1 }  (\mu^{-1}  \nu )^{\delta_{ m }  }                 (\mu^{-1}  \nu )^{  - 10^{-k -1}   }  \\
&  \leq  
C_{\ep, k }  |  \nabla^k w  |_{L^{2}} (\mu^{ k   -s     + \frac{2}{p}     } \nu^{  \frac{1}{p} }  )   (\mu^{ 1   -s     + \frac{2}{p}     } \nu^{  \frac{1}{p} }  )     (\mu^{-1}  \nu )^{\delta_{ k }  } \label{eq:aux_lemma:bootstrap_13}.
\end{align}
where we have used that $ \delta_{ m }  - 10^{-k -1}  \geq \delta_{ k } $ for $m\leq k -1 $ in the last line.

By Young's inequality again,  we obtain from  \eqref{eq:aux_lemma:bootstrap_13} the desired estimate:
\begin{equation}\label{eq:aux_lemma:bootstrap_15}
\begin{aligned}
J (t)\lesssim_{\ep,k }   &     \mu^{ 1   -s     + \frac{2}{p}     } \nu^{  \frac{1}{p} }           |  \nabla^k w   |_{L^{2}}^2  + (\mu^{ 1   -s     + \frac{2}{p}     } \nu^{  \frac{1}{p} } )  \big[   \mu^{ k   -s     + \frac{2}{p}     } \nu^{  \frac{1}{p} }  \big]^2 (\mu^{-1}  \nu )^{ 2\delta_{k }  }    .
\end{aligned}
\end{equation}

\noindent
\textbf{\underline{Step 5: Conclusion}}

Collecting the estimates \eqref{eq:aux_lemma:bootstrap_9a} and \eqref{eq:aux_lemma:bootstrap_15} of $I,J $,  at level $k$, on the interval $ [0,t_0] $,  we have the    differential inequality
\begin{equation}\label{eq:aux_lemma:bootstrap_16}
\begin{aligned}
\frac{d}{dt} |  \nabla^k  w  |_{L^{2}}^2  & \lesssim_{\ep, k }    \mu^{ 1   -s     + \frac{2}{p}     } \nu^{  \frac{1}{p} }           |  \nabla^k w   |_{L^{2}}^2      \\
& \qquad + (\mu^{ 1   -s     + \frac{2}{p}     } \nu^{  \frac{1}{p} } )  \big[   \mu^{ k   -s     + \frac{2}{p}     } \nu^{  \frac{1}{p} }  \big]^2 (\mu^{-1}  \nu )^{ 2\delta_{k }  }          .
\end{aligned}
\end{equation}

By Gronwall's inequality, we obtain  for all $0\leq t    \leq t_0 $ the estimate
\begin{equation}\label{eq:aux_lemma:bootstrap_17}
\begin{aligned}
|  \nabla^k  w (t) |_{L^{2}}^2  & \leq C_{\ep , k  }  e^{C_\ep t \mu^{ 1   -s     + \frac{2}{p}     } \nu^{  \frac{1}{p} }  }  \Big(t        \big(\mu^{ k   -s     + \frac{2}{p}     } \nu^{  \frac{1}{p} }\big)^2  \big( \mu^{ 1   -s     + \frac{2}{p}     } \nu^{  \frac{1}{p} }   \big)    (\mu^{-1}  \nu )^{ 2 \delta_{k } }  \Big)  .  
\end{aligned} 
\end{equation}
By \eqref{eq:aux_noblowup_assumption_0} again, the exponential factor in \eqref{eq:aux_lemma:bootstrap_17} is independent of $\mu,\nu$, and we have
\begin{equation}\label{eq:aux_lemma:bootstrap_18}
\begin{aligned}
|  \nabla^k  w (t) |_{L^{2}}^2  
& \leq  C_{\ep , k  }          \big(\mu^{ k   -s     + \frac{2}{p}     } \nu^{  \frac{1}{p} }\big)^2   (\mu^{-1}  \nu )^{ 2 \delta_{k } } 
\end{aligned}
\quad \text{for all}\,\,  0\leq t    \leq t_0 .
\end{equation}

Once the induction is complete, we conclude that  \eqref{eq:lemma:bootstrap}  holds for all integer $k \in \NN$.

\end{proof}

\subsection{Proof of main proposition}

With all the preparations in hand, we can finish the

\begin{proof}[Proof of Proposition \ref{prop:no_blowup}]

We first show the smoothness of $u$ up to $t =t^*$ and then derive the bound \eqref{eq:prop_no_blowup}.

\noindent
\textbf{\underline{Step 1: Continuation of \eqref{eq:aux_noblowup_claim}}}

For $k\in \NN$, consider the family of Sobolev embedding
\begin{equation}\label{eq:aux_noblowup_9}
|  w    |_{L^\infty }  \lesssim_k     |   w  |_{H^{ \frac{ 3}{2}+   b {10}^{-k-1} }  }   
\end{equation}
where $b$ is as in \eqref{eq:def_small_b}.

Assume that the bootstrap assumption \eqref{eq:aux_noblowup_claim} is satisfied for some $0 < t_0 <t^*$. We aim to use \eqref{eq:aux_noblowup_9} and Lemma \ref{lemma:bootstrap} to show that the bootstrap assumption \eqref{eq:aux_noblowup_claim} holds up to    $t= t^*$.

By standard interpolations, the integer Sobolev  estimates in Lemma \ref{lemma:bootstrap} implies the same non-integer Sobolev estimate for $ |   w  |_{H^{ \frac{ 3}{2}+   b {10}^{-k-1} }  }    $.  It follows     that 
\begin{equation}\label{eq:aux_noblowup_90}
\begin{aligned}
| \nabla^k w (t)  |_{L^\infty ([0,t_0] \times \RR^3 )} & \lesssim_{\ep,k}      (\mu^{-1} \nu )^{  \delta_k } \big(\mu^{ k  +\frac{ 3}{2}+   b{10}^{-k-1} -s      }\mu^{         \frac{2}{p} -1     } \nu^{  \frac{1}{p} - \frac{1}{2}}\big) \\
&   \lesssim_{\ep,k}      (\mu^{-1} \nu )^{ \delta_k -   \frac{1}{2}  - {10}^{-k-1}  } \big(\mu^{ k      -s      }\mu^{         \frac{2}{p}      } \nu^{  \frac{1}{p} }\big) \\
&   \lesssim_{\ep,k}      (\mu^{-1} \nu )^{ \frac{2}{5} }\big(\mu^{ k      -s      }\mu^{         \frac{2}{p}      } \nu^{  \frac{1}{p} }\big)  .
\end{aligned}
\end{equation}
where we have used the definition of $\delta_k$. 

In particular, from \eqref{eq:aux_noblowup_90} we have
\begin{equation}\label{eq:aux_noblowup_9a}
\begin{aligned}
| \nabla w (t)   |_{L^\infty ([0,t_0] \times \RR^3 )}  &  
& \leq C_{\ep}  \mu^{- \frac{2b}{5}  }   \mu^{1   -s +\frac{2}{p} } \nu^{\frac{1}{p}  } .
\end{aligned}
\end{equation}
Thanks to the negative exponent $- \frac{2b}{5}$ on $\mu$ in \eqref{eq:aux_noblowup_9a}, by taking $\mu \geq 1$ sufficiently large depending on $\ep$, we can ensure that 
\begin{equation}\label{eq:aux_noblowup_9b}
| \nabla w (t)  |_{L^\infty } \leq \frac{1 }{2}  M_\ep  \mu^{2  -s } \nu^{\frac{1}{2}  }  \quad \text{for any $t \in [0,t_0]$} 
\end{equation}
where $M_\ep \geq 1$ is the same constant from the bootstrap assumption \eqref{eq:aux_noblowup_claim}.

Combining \eqref{eq:aux_noblowup_1} and \eqref{eq:aux_noblowup_9b}, for any $t \in [0,t_0]$ we have  
\begin{equation}\label{eq:aux_noblowup_9c}
\begin{aligned}
| \nabla u (t)  |_{L^\infty } & \leq | \nabla  \overline{u}   (t)  |_{L^\infty } +  | \nabla w (t)  |_{L^\infty } \\
& \leq  \frac{3}{2} M_\ep  \mu^{2  -s } \nu^{\frac{1}{2}  }  .
\end{aligned}
\end{equation}

In other words, we have shown that for any $\ep>0$, there exists $\mu_\ep>0$ such that if $\mu \geq \mu_\ep$ then under the bootstrap assumption \eqref{eq:aux_noblowup_claim} with $0< t_0 < t^*$, the improved bound \eqref{eq:aux_noblowup_9c} holds. From here we conclude that  
\begin{equation}\label{eq:aux_noblowup_9ca}
| \nabla u (t)  |_{L^\infty } \leq \frac{3}{2} M_\ep  \mu^{2  -s } \nu^{\frac{1}{2}  }   \quad \text{for any $t \in [0,t^*]$} 
\end{equation}
and in particular $u$ does not blowup at $ t = t^*$ and $T> t^*$.

\noindent
\textbf{\underline{Step 2: Verification of \eqref{eq:prop_no_blowup}}}

Finally, we show the bound \eqref{eq:prop_no_blowup}.

Observe that \eqref{eq:aux_noblowup_90} implies \eqref{eq:prop_no_blowup}   for $2 \leq q \leq \infty$ by a standard interpolation with Lemma \ref{lemma:bootstrap}. It suffices to consider $k\in \NN$ with $1 \leq  {q} < 2$.

We use the integral equation
\begin{equation} \label{eq:aux_noblowup_12}
w(t) = \int_0^t e^{(t-\tau)\Delta}  \left(  - \mathbb{P}\D( w\otimes u + \overline{u}\otimes w )  + \overline{E}\right) \, d \tau 
\end{equation}
where $\mathbb{P}$ denotes the projection onto divergence-free vector fields.

Since $\mathbb{P}$ and $e^{ t \Delta}$  are bounded  on $\dot B^{k}_{q,\infty}$, we need to bound 
\begin{equation}\label{eq:aux_noblowup_13}
\begin{aligned}
|w |_{L^\infty([0,t^*]; \dot B^{k}_{q,\infty}  )} \lesssim &     t^*|w\otimes u + \overline{u}\otimes w|_{L^\infty([0,t^*]; \dot B^{k+1}_{q,\infty}   ) }  +   t^*|\overline{E}  |_{L^\infty([0,t^*]; \dot B^{k}_{q,\infty}  )} .
\end{aligned}
\end{equation}

We estimate  the right hand-side of \eqref{eq:aux_noblowup_13} in   order.

For the nonlinear part, we first work with $ \dot W^{k+1 ,{q}} $  whose norm dominates that of $\dot B^{k+1}_{q,\infty}$. Since $k\in \NN$, by product rule we have   
\begin{equation}\label{eq:aux_noblowup_14}
t^*|w\otimes u + \overline{u}\otimes w|_{L^\infty([0,t^*]; \dot W^{k+1 ,{q}})} \lesssim t^* \sum_{ 0 \leq i \leq k+1 } |\nabla^{k+1 - i} w|_{L^2} \Big(   |\nabla^{i} \overline{u} |_{L^r}  +|\nabla^{  i}   {u} |_{L^r}   \Big) .
\end{equation}
where $ r: = \frac{2{q}}{2 - {q}} \in  [  2,\infty)$.

By Proposition \ref{prop:approximate},     \eqref{eq:aux_noblowup_assumption_0}, and Lemma \ref{lemma:bootstrap}, it follows from \eqref{eq:aux_noblowup_14} that
\begin{equation}\label{eq:aux_noblowup_15}
\begin{aligned}
& t^*|w\otimes u + \overline{u}\otimes w|_{L^\infty([0,t^*]; B^{k+1}_{q,\infty} )} \\
& \lesssim_{\ep,k,q } t^*  (\mu^{-1} \nu)^{\delta_k  } \mu^{ k+1    } \big( \mu^{ -s+ \frac{2}{p }  - 1   } \nu^{\frac{1}{p} - \frac{1}{2}  }\big) \big( \mu^{ -s+ \frac{2}{p }  - \frac{2}{r }   } \nu^{\frac{1}{p} - \frac{1}{r}  }\big)  \\
& \lesssim_{\ep,k,q }   (\mu^{-1} \nu)^{  \frac{9}{10}}  \mu^{ k-s+       \frac{2}{p }  - \frac{2}{q}  } \nu^{\frac{1}{p} - \frac{1}{q}  } 
\end{aligned}
\end{equation}
where we have used $ r  = \frac{2{q}}{2 - {q}} $ and $\delta_k > \frac{9}{10}$.

Finally, for the source term by Proposition \ref{prop:approximate} we have
\begin{equation}\label{eq:aux_noblowup_17}
t^*|\overline{E}  |_{L^\infty([0,t^*]; \dot B^{k }_{q,\infty} )} \lesssim_{\ep,k  }  (\mu^{-1} \nu)^{     1 }    \mu^{ k-s+       \frac{2}{p }  - \frac{2}{q}  } \nu^{\frac{1}{p} - \frac{1}{q}  } .
\end{equation}

Combining \eqref{eq:aux_noblowup_13}, \eqref{eq:aux_noblowup_15}, and \eqref{eq:aux_noblowup_17}, we have for any $1 \leq q < 2$
\begin{equation}
|w |_{L^\infty([0,t^*]; \dot B^{k }_{q,\infty} )} \lesssim_{\ep,k,q }  (\mu^{-1} \nu)^{    \frac{9}{10 }  }    \mu^{ k-s+       \frac{2}{p }  - \frac{2}{q}  } \nu^{\frac{1}{p} - \frac{1}{q}  }  
\end{equation}
which shows \eqref{eq:prop_no_blowup} with $1\leq  q< 2$.

\end{proof}

\section{Proof of main Theorems}\label{sec:proof}

In this last section, we finish the proof of Theorem \ref{thm:Besov}.

\subsection{Proof of Theorem \ref{thm:Besov}}

\begin{proof}[Proof of Theorem \ref{thm:Besov} for $s > 0$]

In this case all statements in Theorem \ref{thm:Besov}   have been proved in the previous sections.

Indeed, given such $s>0$ and $p$ we first take $\ep>0$ smaller if necessary so that  Proposition \ref{prop:approximate_inflation_s>0} holds. Then we have:
\begin{itemize}
\item Smallness of the initial data \eqref{eq:thm_Hs_NS_1}--- proved  in Proposition \ref{prop:approximate_inflation_s>0}.

\item Smoothness of the local solution $u$ on $[0,t^*]$--- proved in Proposition \ref{prop:no_blowup}.

\item Growth of $| u |_{ \dot{B}^{s}_{p, \infty  } }  $, i.e.  \eqref{eq:thm_Hs_NS_2}---- follows from \eqref{eq:prop:approximate_inflation_s>0_2} in Proposition \ref{prop:approximate_inflation_s>0} and \eqref{eq:prop_no_blowup} in Proposition \ref{prop:no_blowup} by taking $\mu$ large:
\begin{eqnarray}
| u (t^*)|_{ \dot{B}^{s}_{p, \infty  } }  \geq  |\overline{u} (t^*)|_{ \dot{B}^{s}_{p, \infty  } } - | w (t^*)|_{ \dot{B}^{s}_{p, \infty  } } \geq \ep^{-1}.
\end{eqnarray}

\end{itemize}

\end{proof}

When $s<0$, since Proposition \ref{prop:no_blowup} only provides the control of Sobolev norms with positive derivatives, it still remains to prove \eqref{eq:thm_Hs_NS_2}.

\begin{proof}[Proof of Theorem \ref{thm:Besov} for $s < 0$]

To show  \eqref{eq:thm_Hs_NS_2} when $s<0$, we use the mild formulation for the difference $w$ as in the proof of Proposition \ref{prop:no_blowup}. Since $w$ has zero initial data and is smooth on $[0,t^*]$, we have
\begin{align}\label{eq:aux_finalproof_1}
w(t) = \int_0^t  e^{(t-\tau)\Delta} \left( \overline{E} - \mathbb{P}\D( w\otimes u + \overline{u}\otimes w )\right) \, d\tau     .
\end{align}
We now take the $ \dot B^{s}_{p, \infty }$ norm on \eqref{eq:aux_finalproof_1}, and by the $\dot B^{s}_{p, \infty}$ boundedness of $ \mathbb{P} $ and of the heat semi-group there holds
\begin{align}\label{eq:aux_finalproof_2}
|w(t^*)|_{\dot B^{s}_{p,\infty}}  
&\lesssim       \underbrace{ t^* |  ( w\otimes u + \overline{u}\otimes w ) |_{L^\infty([0,t^*]; \dot B^{s+1}_{p, \infty })  }}_{:= \mathcal{N}} + \underbrace{t^* | \overline{E} |_{L^\infty([0,t^*]; \dot B^{s}_{p,\infty})  } }_{:= \mathcal{S}}     .
\end{align}

There are many ways to estimate  the terms $ \mathcal{N},\mathcal{S} $ on the right-hand side, and the non-optimal but simple one below suffices for our purpose.

\noindent
\textbf{\underline{Part 1: Estimate of $\mathcal{N}$}}

For the nonlinear terms, we first note that by Proposition \ref{prop:approximate} and Lemma  \ref{lemma:bootstrap} for any $k\in \NN$, $1 \leq {q} < 2$, there holds
\begin{equation}\label{eq:aux_finalproof_7}
\begin{aligned}
|  ( w\otimes u + \overline{u}\otimes w ) |_{L^\infty([0,t^*];  \dot W^{ k , {q} }  )  }  &  \lesssim  \sum_{0\leq i \leq k} | \nabla^{k-i }w |_{L^\infty_t  L^{ 2 }     } \Big(   | \nabla^i u |_{L^\infty_t   L^{  {q}}   )  }+ |  \nabla^i  \overline{u} |_{L^\infty_t L^{  {q}}     }   \Big)    \\
& \lesssim_{\ep, k, q}  \big(\mu^{-1}  \nu  \big)^{\frac{9}{10}}   \mu^{k  - 2s + \frac{2}{p } }  \nu^{\frac{1}{p}    }     \mu^{ \frac{2}{p }  - \frac{2}{ {q} }} \nu^{\frac{1}{p} - \frac{1}{ {q} }  }  
\end{aligned}
\end{equation}
where $q':= \frac{2q}{  2 - q } \in [  2 ,\infty)$. By interpolation \eqref{eq:aux_finalproof_7} holds for all $k\geq 0$.

Next,   consider the two cases $p \geq \frac{3}{2}$ and $ 1\leq p < \frac{3}{2}$. 

\textit{Case I: $ 1\leq p < \frac{3}{2}$.}

When  $ 1\leq p < \frac{3}{2}$, we have $s+1 >0$. Using Lemma \ref{lemma:besov_interpolation} with $s_1= 0$  and $s_2 = k_0+1$ together with \eqref{eq:aux_finalproof_7} yields
\begin{align}\label{eq:aux_finalproof_7a}
|  ( w\otimes u + \overline{u}\otimes w ) |_{L^\infty([0,t^*]; \dot B^{s+1}_{p,\infty })  }  &\lesssim_{\ep  }    \big(\mu^{-1}  \nu  \big)^{\frac{9}{10}}   \mu^{ 1  -  s + \frac{2}{p } }  \nu^{\frac{1}{p}    }      . 
\end{align}

\textit{Case II: $  p \geq  \frac{3}{2}$.}

When  $  p \geq  \frac{3}{2}$,   by the  embedding $\dot B^{s+3 - \frac{3}{p}   }_{ \frac{3}{2}, \infty } \hookrightarrow \dot B^{s+1}_{p, \infty }  $ and that $ s+3-  \frac{3}{p}   >0$, we can apply Lemma \ref{lemma:besov_interpolation} to interpolate \eqref{eq:aux_finalproof_7} with $q = \frac{3}{2}$ and obtain
\begin{align}\label{eq:aux_finalproof_7b}
|  ( w\otimes u + \overline{u}\otimes w ) |_{L^\infty([0,t^*]; \dot B^{s+1}_{p, \infty })  }  &\lesssim_{\ep }   |  ( w\otimes u + \overline{u}\otimes w ) |_{L^\infty([0,t^*]; \dot B^{s+3 - \frac{3}{p}   }_{ \frac{3}{2},  \infty } )  } \\
&\lesssim_{\ep }   \big(\mu^{-1}  \nu  \big)^{\frac{9}{10}}   \mu^{s+3 - \frac{3}{p}   - 2s + \frac{2}{p } }  \nu^{\frac{1}{p}    }     \mu^{ \frac{2}{p }  - \frac{4}{ 3 }} \nu^{\frac{1}{p} - \frac{2}{ 3 }  }   \\
&\lesssim_{\ep }   \big(\mu^{-1}  \nu  \big)^{\frac{9}{10}}   (\mu^{1  -      s + \frac{2}{p } }  \nu^{\frac{1}{p}    } )  \mu^{ 2 -       \frac{3}{p } }       \mu^{ \frac{2}{p }  - \frac{4}{ 3 }} \nu^{\frac{1}{p} - \frac{2}{ 3 }  }   \\
& \lesssim_{\ep }   \big(\mu^{-1}  \nu  \big)^{\frac{9}{10}}   \mu^{1  -  s + \frac{2}{p } }  \nu^{\frac{1}{p}    }   \Big( \mu^{-1} \nu \Big)^{ - \frac{2}{3} + \frac{1}{p}   }     
\end{align}
which implies 
\begin{align}
t^* |  ( w\otimes u + \overline{u}\otimes w ) |_{L^\infty([0,t^*]; \dot B^{s+1}_{p, \infty })  }  & \lesssim_{\ep } \big(\mu^{-1}  \nu  \big)^{  \frac{9}{10} -\frac{2}{3}  + \frac{1}{p} }  \\
& \lesssim_{\ep } \big(\mu^{-1}  \nu  \big)^{  \frac{1}{ 5}    }.\label{eq:aux_finalproof_7c}
\end{align}

\noindent
\textbf{\underline{Part 2: Estimate of $\mathcal{S}$}}

Finally, we estimate the source term. Consider two cases: $ p=1$ and $p>1$. 

\textit{Case I: $  p = 1 $.}

If $p =1$, then $ s>0$ and we can proceed using Lemma \ref{lemma:besov_interpolation} with $s_1= 0$  and $s_2 = k_0$ together with Proposition \ref{prop:approximate}  to obtain that
\begin{align}\label{eq:aux_finalproof_8}
| \overline{E} |_{L^\infty([0,t^*]; \dot B^{ s  }_{p   , \infty })   }  & \lesssim_{\ep }     (\mu^{-1} \nu)   (\mu^{1+\frac{ 2}{p}-  s}    \nu^\frac{1}{p}  )   
\end{align}

\textit{Case I: $  p > 1 $.}

If $p>1$, we choose $ 0< \delta < 1/5$ small such that $ s+ 3 - \frac{3}{p} -\delta >0 $ and   $p_\delta: = \frac{3}{3 -\delta} < p$.   By the embedding $\dot B^{ s+ 3 - \frac{3}{p} -\delta  }_{p_\delta  , \infty } \hookrightarrow \dot B^{ s     }_{p   , \infty } $, Lemma \ref{lemma:besov_interpolation}, and Proposition \ref{prop:approximate} we get 
\begin{align}\label{eq:aux_finalproof_8a}
| \overline{E} |_{L^\infty([0,t^*]; \dot B^{ s+ 3 - \frac{3}{p} -\delta  }_{p_\delta  , \infty })   }  & \lesssim_{\ep }   \mu^{ s+ 3 - \frac{3}{p} -\delta - s }  (\mu^{-1} \nu)   (\mu^{1+\frac{ 2}{p}-  s}    \nu^\frac{1}{p}  )  \mu^{ \frac{ 2}{p}  - \frac{2}{p_\delta} } \nu^{\frac{1}{p} - \frac{1}{p_\delta} } \\
& \lesssim_{\ep }  (\mu^{-1} \nu)    \mu^{   3 - \frac{3}{p} -\delta   }   (\mu^{1+\frac{ 2}{p}-  s}    \nu^\frac{1}{p}  )  \mu^{ \frac{2}{p} - 2  + \frac{2\delta }{3} } \nu^{\frac{1}{p} - 1 + \frac{ \delta }{3} }  \\
& \lesssim_{\ep }  (\mu^{-1} \nu)^{\frac{1}{p}   + \frac{ \delta }{3} }    (\mu^{1+\frac{ 2}{p}-  s}    \nu^\frac{1}{p}  )    .
\end{align}

Therefore, in either case by \eqref{eq:aux_finalproof_8} and \eqref{eq:aux_finalproof_8a} we have
\begin{align}\label{eq:aux_finalproof_8b}
t^* | \overline{E} |_{L^\infty([0,t^*]; \dot B^{ s+ 3 - \frac{3}{p} -\delta  }_{p_\delta  , \infty })   }    
& \lesssim_{\ep } (\mu^{-1} \nu)^{ \frac{1}{p} + \frac{\delta}{3}}     \leq (\mu^{-1} \nu)^{   \frac{\delta}{3}}  .
\end{align}

\noindent
\textbf{\underline{Part 4: Conclusion}}

Collecting    \eqref{eq:aux_finalproof_7c} for $\mathcal{N}$  and  \eqref{eq:aux_finalproof_8b} for $\mathcal{S}$ we obtain
\begin{align}\label{eq:aux_finalproof_20}
|w(t^*)|_{\dot B^{s}_{p, \infty }} & \lesssim_{\ep }    (\mu^{-1} \nu)^{   \frac{\delta}{3}}  
\end{align}
for some small $ 0< \delta < 1/5$.

We still have the freedom to increase the value of $\mu$, and we can choose $\mu$ sufficiently large such that in \eqref{eq:aux_finalproof_20} we have
$$
|w(t^*)|_{\dot B^{s}_{p, \infty }}  \leq \ep.
$$
Thanks to Proposition \ref{prop:approximate_inflation_s<0}, this completes the proof since 
$$
|u(t^*)|_{\dot B^{s}_{p, \infty }} \geq |\overline{u}(t^*)|_{\dot B^{s}_{p,\infty}} -|w(t^*)|_{\dot B^{s}_{p, \infty }} \geq C \ep^{-2} - \ep \geq \ep^{-1} 
$$ provided $\ep $ is small.

\end{proof}

\subsection{Proof of Theorem \ref{thm:growth}}

Finally we prove Theorem \ref{thm:growth}.

To avoid visual confusion with $w= u -  \overline{u} $, we denote the vorticity by $\nabla \times u$.

Since any supercritical Sobolev/Besov spaces embeds into $\dot W^{2-\delta,1}$ when $\delta>0$ is small enough, we apply Theorem \ref{thm:Besov} with $s$ close to $2$ and $p=1$ to obtain the solution with initial data that is  $\ep$-small in the prescribed supercritical Sobolev/Besov norm.

To obtain a lower bound for $ |\nabla \times u (t^*)|_{L^\infty}  $ we consider
\begin{equation}\label{eq:proof_thm:growth}
|\nabla \times u  (t^*) |_{L^\infty} \geq  |  \nabla \times \overline{u} (t^*)  |_{L^\infty}  -  |\nabla   w(t^*)|_{L^\infty}.
\end{equation}

The estimate of  $|\nabla \times \overline{u}(t^*) |_{L^\infty}$ was proved in Lemma \ref{lemma:vorticity_s>0}:
\begin{equation}\label{eq:proof_thm:growth_2}
|\nabla \times \overline{u}(t^*) |_{L^\infty} \geq C \ep^{-2 } \mu^{1-s}( \mu^2 \nu^1).
\end{equation}
where we recall that $p=1$ has been chosen.

Therefore, by Proposition \ref{prop:no_blowup}, \eqref{eq:proof_thm:growth}, and \eqref{eq:proof_thm:growth_2}, for all sufficiently large $\mu$  such a solution $u$ satisfies
\begin{equation}\label{eq:proof_thm:growth_3}
|\nabla \times u(t^*) |_{L^\infty} \geq  C\ep^{-2} \mu^{1-s} ( \mu^2 \nu^1) .
\end{equation}

The upper bound $ |\nabla\times u_0|_{L^\infty}  $ follows from  \eqref{eq:aux_prop:approximate_inflation_s>0_1}, yielding
\begin{equation}\label{eq:proof_thm:growth_4}
|\nabla \times u_0 |_{L^\infty} \leq   |\nabla  u_0 |_{L^\infty} \leq C \ep^{2} \mu^{1-s} ( \mu^2 \nu^1) .
\end{equation}

It follows from \eqref{eq:proof_thm:growth_3} and \eqref{eq:proof_thm:growth_4} that
\begin{equation}
\frac{|\nabla \times u(t^*) |_{L^\infty} }{|\nabla \times  u_0 |_{L^\infty}} \geq C\ep^{-4} \geq M
\end{equation}
provided $\ep>0$ is sufficiently small.

\bibliographystyle{alpha}
\bibliography{NS_ill_2024}

\end{document}